\documentclass[reqno, 11pt]{amsart}
\usepackage[left=2cm, right=2cm, top=2cm, bottom=2cm]{geometry}
\usepackage[T1]{fontenc}
\usepackage[utf8]{inputenc}	
\usepackage[english]{babel}
\usepackage[babel]{csquotes}
\usepackage{amssymb}
\usepackage{amsmath}
\usepackage{amsthm}
\usepackage{latexsym}
\usepackage{xcolor}
\usepackage{mathrsfs}
\usepackage{bbm}
\usepackage{verbatim}
\usepackage[style=numeric, sorting=nty, sortcites=true, backend=biber, maxbibnames=12, giveninits=true]{biblatex}
\usepackage{faktor}
\usepackage[shortlabels]{enumitem}
\usepackage[english]{babel}
\usepackage{cases} 
\usepackage[colorlinks=true, pdfstartview=FitV, linkcolor=blue, citecolor=blue, urlcolor=blue]{hyperref}
\usepackage{silence}
\numberwithin{equation}{section}
\newtheorem{thm}{Theorem}[section]

\newtheorem{lem}[thm]{Lemma}

\theoremstyle{definition}
\newtheorem{defin}[thm]{Definition}
\newtheorem{remark}[thm]{Remark}

\renewcommand{\t}{t}
\renewcommand{\d}{{\mathrm d}} % d for integrals
\renewcommand{\div}{\operatorname{div}} % divergence
\newcommand{\norm}[1]{\left\|#1\right\|} % norms
\newcommand{\ip}[2]{\left\langle #1,#2 \right\rangle} % duality
\def\laweq{\stackrel{\mathcal L}{=}}
\def\supp{\sup_{\tau \in [0,t]}}

\def\tom{\widetilde{\Omega}}
\def\tF{\widetilde{\mathscr{F}}}
\def\tP{\widetilde{\mathbb{P}}}
\def\tE{\widetilde{\mathbb{E}}}

\def\hom{\widehat{\Omega}}
\def\hF{\widehat{\mathscr{F}}}
\def\hP{\widehat{\mathbb{P}}}
\def\hE{\widehat{\mathbb{E}}}

\def\hphi{\widehat\varphi}
\def\hmu{\widehat\mu}

\def\bu{{\boldsymbol u}}

\def\bH{{\boldsymbol H}}
\def\bHs{{\boldsymbol H_\sigma}}

\def\b #1{{\boldsymbol #1}}
\def\enne{\mathbb{N}}

\def\erre{\mathbb{R}}
\def\R{\mathbb{R}}

\def\P{\mathbb{P}}

\def\E{\mathop{{}\mathbb{E}}}

\def\cL{\mathscr{L}}
\def\cF{\mathscr{F}}

\def\eps{\varepsilon}
\def\cP{\mathscr{P}}

\def\OO{\mathcal{O}}
\def\embed{\hookrightarrow}
\def\L{\mathcal{L}_{\text{Strat}}}
\def\hL{\widehat{\mathcal L}_{\text{Strat}}}
\def\hLd{\widetilde{\mathcal L}_{\text{Strat}}^{\delta}}
\def\hLdd{\widehat{\mathcal L}_{\text{Strat}}^{\delta}}
\def\Ln{\L^{\eps, n}}
\def\tLn{\widetilde{\mathcal{L}}_{\text{Strat}}^{\eps, n}}
\def\tLe{\widetilde{\mathcal{L}}_{\text{Strat}}^{\eps, \lambda}}
\def\cLe{\widecheck{\mathcal{L}}_{\text{Strat}}^{\eps}}
\def\W{\mathbb{W}_r}
\def\com{\widecheck{\Omega}}
\def\cf{\widecheck{\cF}}
\def\cp{\widecheck{\P}}
\def\bj{\b {j}}
\def\bsigma{\b\sigma}
\def\bSigma{\b\Sigma}
\def\cE{\widecheck{\E}}
\newcommand{\hOm}{\widehat{\Omega}}
\newcommand{\seq}[2]{\{{#1}_{#2}\}_{#2 \in \mathbb{N}}}

\DeclareFontFamily{U}{mathx}{}
\DeclareFontShape{U}{mathx}{m}{n}{<-> mathx10}{}
\DeclareSymbolFont{mathx}{U}{mathx}{m}{n}
\DeclareMathAccent{\widehat}{0}{mathx}{"70}
\DeclareMathAccent{\widecheck}{0}{mathx}{"71}

\def\luca #1{{\color{black} #1}}
\def\andrea #1{{\color{black} #1}}
\def\andreap #1{{\color{black} #1}}

\makeatletter
\DeclareFontFamily{OMX}{MnSymbolE}{}
\DeclareSymbolFont{MnLargeSymbols}{OMX}{MnSymbolE}{m}{n}
\SetSymbolFont{MnLargeSymbols}{bold}{OMX}{MnSymbolE}{b}{n}
\DeclareFontShape{OMX}{MnSymbolE}{m}{n}{
	<-6>  MnSymbolE5
	<6-7>  MnSymbolE6
	<7-8>  MnSymbolE7
	<8-9>  MnSymbolE8
	<9-10> MnSymbolE9
	<10-12> MnSymbolE10
	<12->   MnSymbolE12
}{}
\DeclareFontShape{OMX}{MnSymbolE}{b}{n}{
	<-6>  MnSymbolE-Bold5
	<6-7>  MnSymbolE-Bold6
	<7-8>  MnSymbolE-Bold7
	<8-9>  MnSymbolE-Bold8
	<9-10> MnSymbolE-Bold9
	<10-12> MnSymbolE-Bold10
	<12->   MnSymbolE-Bold12
}{}

\let\llangle\@undefined
\let\rrangle\@undefined
\DeclareMathDelimiter{\llangle}{\mathopen}%
{MnLargeSymbols}{'164}{MnLargeSymbols}{'164}
\DeclareMathDelimiter{\rrangle}{\mathclose}%
{MnLargeSymbols}{'171}{MnLargeSymbols}{'171}
\makeatother
\AtEveryBibitem{%
	\clearfield{doi}%
	\clearfield{url}%
	\clearfield{issn}%
	\clearfield{isbn}%
	\ifentrytype{article}
	{}
	{\clearfield{eprint}}
} % refining bibliography removing DOI (comment to reinsert)
\renewbibmacro*{publisher+location+date}{%
	\ifentrytype{book}
	{\printlist{publisher}%
		\iflistundef{location}
		{\setunit*{\addcomma\space}}
		{\setunit*{\addcomma\space}}%
		\printlist{location}}
	{\printlist{location}%
		\iflistundef{publisher}
		{\setunit*{\addcomma\space}}
		{\setunit*{\addcomma\space}}%
		\printlist{publisher}}
	\setunit*{\addcomma\space}%
	\usebibmacro{date}%
	\newunit} % change macro for books

\begin{document}
	\title[Stochastic phase separation driven by transport noise]
	{Stochastic phase separation driven by transport noise}
	\author{Andrea Di Primio, Andrea Papini \and Luca Scarpa}
	\address{Classe di Scienze, Scuola Normale Superiore, Piazza dei Cavalieri 7, 56126 Pisa, Italy}
	\email{andrea.diprimio@sns.it}
    \address{Department of Mathematical Sciences, Chalmers University of Technology and University of Gothenburg, 412 96 Gothenburg, Sweden} \email{andreapa@chalmers.se}
    \address{Dipartimento di Matematica,
		Politecnico di Milano, Via E.~Bonardi 9, 20133 Milano, Italy}
	\email{luca.scarpa@polimi.it}
    \subjclass[2020]{35Q35, 35R60, 60H15, 76T06, 76F25}
	\keywords{stochastic Cahn--Hilliard equations; Flory--Huggins potential; transport noise.}	
	% Abstract
	\begin{abstract}
Phase separation phenomena occurring in complex fluids
are known to be sensitive to
hydrodynamic effects: when a binary fluid mixture undergoes spinodal decomposition, the emerging pattern of domains is no longer dictated by 
interfacial dynamics alone, 
but is reshaped by advection.
In several applications of interest,  
advection effects may even 
cause the onset of turbulence,
further affecting the segregation process
on a wide range of spatial scales.
Motivated by this, we propose a 
Cahn--Hilliard model driven by transport noise
able to capture the intrinsic stochasticity 
of turbulence.
The model is analyzed in its 
thermodynamically-relevant framework, 
namely employing 
a singular Flory--Huggins potential 
and a possibly degenerate mobility,
and the noise is considered both 
in It\^o and Stratonovich form.
In this work, as a first-step investigation, 
we establish well-posedness of the system
in two and three spatial dimensions
and its thermodynamical consistency.
	\end{abstract}
	\maketitle
	
	\section{Introduction} \label{sec:intro} \noindent
\subsection{A mathematical description of phase separation}
	When a multicomponent system (such as a metallic alloy or a complex fluid) is subject to a sufficiently high thermodynamical stress, usually in the form of a rapid thermal quench, it undergoes a physical process called phase separation. More specifically, the individual constituents of the system (hereafter also referred to as phases) tend to segregate rather than persist in a (quasi-)homogeneous state. This process is ubiquitous in many different disciplines, ranging from materials science to biology, where it has attracted significant interest in recent years due to the fascinating discovery that liquid-liquid phase separation is a fundamental organization mechanism in cells (cfr. \cite{Hyman}). Phase separation is typically characterized by two distinct regimes acting at different time and length scales. Initially, thermodynamical instabilities trigger a process called spinodal decomposition: at the mesoscopic spatial scale, local concentration gradients cause fast motions by uphill diffusion, that eventually are the main driver of phase separation. At a later stage, the dynamics of the system typically reaches a metastable regime called coarsening: on much longer time scales, single-phase-rich domains grow in size, until, eventually, their characteristic length may be comparable with that of the full domain $\OO$, at equilibrium. 
    
    A mathematical model capable of capturing this two-stage dynamics dates back to seminal works by J. W. Cahn and J. E. Hilliard, among which we refer to, for instance, \cite{cahn-hill, cahn-hill2, CH1961}. Therein, the following mathematical model for phase separation is proposed. Focusing without much loss of generality on binary systems, i.e., multicomponent mixtures consisting of only two different components, we may assume that a bounded, open and connected region $\OO \subset \mathbb R^{d}$, with $d \in \{2, 3\}$, is filled with two different substances, that we abstractly denote with $A$ and $B$, and that the resulting system evolves until a fixed final time $T > 0$. Introducing the rescaled concentrations
	\[
	\varphi_i: [0,T]\times\OO \to [0,1], \qquad i \in \{A, B\},
	\]
	we observe that formally
	\begin{equation} \label{eq:mass}
		\varphi_A(t,x)+ \varphi_B(t,x) = 1
	\end{equation}
	for every $x \in \OO$ and $t \in [0,T]$. Therefore, instead of deriving two coupled equations for the two phase concentrations, it is enough to  describe the dynamics of the rescaled concentration difference 
	\[
	\varphi : [0,T]\times\OO \to [-1,1] \qquad \varphi(t, x)=\varphi_A(t,x)- \varphi_B(t, x)
	\]
	observing that \eqref{eq:mass} implies that
	\[
	\varphi_A(t,x) = \dfrac{1-\varphi(t,x)}{2}, \qquad \varphi_B(t,x)= \dfrac{1+\varphi(t, x)}{2}
	\]
	for all $ x \in \OO$ and $t \in [0,T]$. The classical Cahn--Hilliard model proposed in \cite{cahn-hill} reads
	\begin{equation} \label{eq:dCHstrong}
		\begin{cases}
			\partial_t \varphi = 
            \operatorname{div}(m(\varphi)\nabla\mu) & \quad \text{in } (0,T)\times\OO, \\
			\mu = -\Delta \varphi + F'(\varphi)& \quad \text{in } (0,T)\times\OO, \\
			\partial_{\b n} \varphi = m(\varphi)\nabla\mu\cdot{\b n} = 0 & \quad \text{on } (0,T)\times\partial\OO, \\
			\varphi(0) = \varphi_0 & \quad \text{in }\OO,
		\end{cases}
	\end{equation}
    where $F:[-1,1]\to \R$ is a double-well potential
    and $m:[-1,1]\to\R$ is a mobility function.
    The thermodynamical consistency of the model 
    requires $F'$ to be singular at $\pm1$, with the 
    classical choice being the Flory-Huggins potential (cfr. \cite{Flory42, Huggins41}), namely
    \begin{equation}\label{F_log}
    F_{\text{log}}:[-1,1] \to \mathbb R,  \qquad F(x) = \dfrac{\theta}{2}[(1+x)\log(1+x)+(1-x)\log(1-x)] - \dfrac{\theta_0}{2}x^2,
    \end{equation}
    for $0 < \theta < \theta_0$, extended by continuity at the endpoints $\pm1$.
    As far as the mobility is concerned, 
    the most relevant choice 
    is given by 
    \begin{equation}
        \label{m_deg}
    m_{\text{deg}}:[-1,1] \to \mathbb R, \qquad m_{\text{deg}}(x) = 1-x^2,
    \end{equation}
    typically referred to as degenerate, since $m_{\text{deg}}(\pm1) = 0$. Non-degenerate,
    either variable or constant, mobility functions 
    are nonetheless employed in the literature.
    Equation \eqref{eq:dCHstrong} has two fundamental properties:
    \begin{enumerate}[(i)] \itemsep0.3em
        \item it is the gradient flow
    with respect to the $H^1(\OO)^*$-metric 
    of the free-energy functional 
    \[
    \mathcal E: \operatorname{dom} \mathcal E \subset H^1(\OO) \to \mathbb R, \qquad \mathcal E(v) = \dfrac{1}{2}\int_\OO |\nabla v( x)|^2 \: \d x + 
    \int_\OO F(v( x)) \: \d x,
    \]
    namely, the dynamics is driven by the minimization of the interfacial energy and the competition of mixing and demixing effects;
        \item mass conservation holds, namely, multiplying the first equation in \eqref{eq:dCHstrong} by the unitary function, we easily get by integration by parts that, for all $t \geq 0$,
        \[
        \int_\OO \varphi(t, x) \: \d x = \int_\OO \varphi_0( x) \: \d x.
        \]
    \end{enumerate}

\subsection{Accounting for convection and turbulence}
In several situations of interest, 
phase-separation processes occur 
in a moving fluid, e.g.~in the case of mixtures of immiscible liquids.
In this framework, the deterministic approach 
to phase-separation typically prescribes 
a forcing term of transport type in \eqref{eq:dCHstrong},
namely 
\begin{equation} \label{eq:dCHstrong_conv}
		\begin{cases}
			\partial_t \varphi = 
            \operatorname{div}(m(\varphi)\nabla\mu)
            +\nabla\varphi\cdot\bu
            & \quad \text{in } (0,T)\times\OO, \\
			\mu = -\Delta \varphi + F'(\varphi)& \quad \text{in } (0,T)\times\OO, \\
			\partial_{\b n} \varphi = m(\varphi)\nabla\mu\cdot{\b n}= 0 & \quad \text{on } (0,T)\times\partial\OO, \\
			\varphi(0) = \varphi_0 & \quad \text{in }\OO,
		\end{cases}
	\end{equation}
where $\bu$ represents the velocity of the fluid.
From the mathematical perspective, 
$\bu$ is usually required to be divergence-free
in order to model incompressibility, and may 
either be a fixed vector-field 
or possibly satisfy a
further equation such as Navier-Stokes or Euler,
as in the case of the celebrated H-model\andreap{, see for detail \cite[Chapter V]{RevModPhys.49.435}}.
In the former case phase-separation is usually referred to as {\em passive}, meaning that the effect of the phase-variable $\varphi$ on the fluid itself is neglected, 
whereas in the latter case the phase-separation process
is called {\em active}.

The presence of a moving fluid is known to heavily affect phase-separation dynamics: if the action of convection is sufficiently high, as in the case of strong stirring, 
the formation of the interface separating the two fluids may be delayed, or even prevented. This phenomenon is  referred to as coarsening arrest, 
and it has been extensively studied
in the recent literature, for which we refer to \cite{Berti2005,Perlekar2014,Perlekar2017, MFFIT-CH} 
and the references therein. In particular, 
phase-separation has been shown to  
be deeply influenced by turbulence.

\andreap{The mathematical modelling of turbulence requires 
the velocity field to have an intrinsic chaotic nature \cite{Tong1989,Chan1987, BCF1991, Kraichnan1968, FGV2001, MajdaKramer1999}.}
More precisely, if $\bu$ is represented in components with respect to a typical Fourier basis, roughness arises in the form of 
higher time-oscillations on the so-called 
high-modes (alternatively, on small-scales)
and lower time-oscillations on the low-modes
(i.e.~on large scales).
In order to capture the 
energy stored at small-scales in the
deterministic model \eqref{eq:dCHstrong_conv},
one of the most natural ways to 
track turbulence is to
employ a rescaling argument \cite{Berthier2014}: 
given a velocity field $\bu$, 
a sequence
of rescaled variables $(\bu_\eps)_\eps$ is introduced
to enforce high time-oscillations
and the 
dynamics of the deterministic system  
with $\bu_\eps$ are considered
when $\eps$ is small.

\andreap{Here we take a complementary point of view, motivated by the mathematical theory of turbulent transport \cite{MikuleviciusRozovskii2004,Holm2015,Memin2014, CGH2017, Flandoli2011}.} Rather than postulating a given, deterministic random velocity field acting on the phase variable, we model the effective action of a rapidly fluctuating, small-scale advecting field through a transport-type noise. 
This modeling choice is not merely postulated, but formally arises as a scaling limit of a divergence-free velocity field whose time-correlations degenerate at small scales, in the spirit of the rescaling argument as $\eps\to0$ outlined above: as the correlation time of the small-scale component of
$\bu$ vanishes, its action on the phase variable converges, in the Wong–Zakai sense \cite{WongZakai1965,WongZakai1965b}, to a white-in-time stochastic perturbation of transport type in Stratonovich form, namely
\[
\partial_t \varphi_\eps = 
            \operatorname{div}(m(\varphi_\eps)
            \nabla\mu_\eps) 
            +\nabla\varphi_\eps\cdot\bu_\eps
\qquad\leadsto\qquad
\partial_t\varphi = \div[m(\varphi)\nabla\mu] 
                + \nabla\varphi\cdot\bu\circ \dot{W},
\]
where $W$ is a given Wiener process.
\andreap{This perspective has been extensively developed in the context of fluid
dynamics, where transport noise arises as an effective description of the
action of turbulent, small-scale velocity fields on the large-scale
dynamics, both through formal arguments and rigorous two-scale model
reduction \cite{FP2020,FP2021,FP2022,DP2024}. In a suitable scaling
regime, the resulting stochastic equations converge to deterministic
parabolic equations featuring an additional eddy viscosity, for passive
scalars \cite{Galeati2020,FGL2022eddy,FGL2024} as well as for the fluid
velocity itself \cite{FGL2021scaling,FLL2024,EJP2025}. At the same time,
transport noise produces genuinely regularizing effects: enhanced and
anomalous dissipation \cite{GessYaroslavtsev2021,Agresti2025}, delayed
blow-up and global existence with large probability
\cite{FL2021,FGL2021delayed,Agresti2026,Agresti2024RD}, and
regularization-by-noise and well-posedness results ranging from the linear
transport equation \cite{FGP2010} to nonlinear models driven by rough,
Kraichnan-type noise with physically relevant spectra
\cite{BGM2025,AgrestiHussein2023,AgrestiVeraar2024}. Beyond the fluid
equations themselves, the same framework has been used to quantify
turbulence effects on coagulation processes \cite{PFH2023,FHP2024}, on the
Kelvin--Helmholtz instability \cite{FMP2024}, and in L\'evy-driven
generalisations of the averaged dissipation mechanism \cite{FPR2025}.}
%This perspective has been extensively developed in the context of fluid dynamics, where transport noise has been employed to model the effect of turbulent, small-scale velocity fields on the large-scale dynamics of both passive scalars and the fluid velocity itself \cite{FP2020, FP2021, FP2022, FGL2024, DP2024, Agresti2026, FGL2022eddy,FGL2021scaling, FLL2024, Galeati2020}, and, more broadly, to capture universal features of turbulent transport such as enhanced dissipation and regularisation \cite{PFH2023,FHP2024,FMP2024,FPR2025,EJP2025,FGL2021delayed,BGM2025, GessYaroslavtsev2021,FGP2010, FL2021, AgrestiHussein2023, AgrestiVeraar2024,Agresti2025, Agresti2024RD}.
In this direction, we refer to the monograph \cite{FL2023} for a general overview.

This motivates switching to a stochastic version of 
\eqref{eq:dCHstrong}
where the noise is structurally assumed to be in transport form, in order to model 
the effect of fluid turbulence on phase-separation dynamics. In this spirit, 
we propose two main
stochastic models for phase-separation, namely:
\begin{enumerate}[label=(\arabic*)]
		\item the stochastic Cahn--Hilliard equation with It\^o transport noise
		\begin{equation} \label{eq:ch_ito}
			\begin{cases}
				\d \varphi - \div[m(\varphi)\nabla\mu] \,\d t 
                = \nabla\varphi\cdot\b{\Sigma}\,\d W
                \quad&\text{in } (0,T)\times\OO, \\
				\mu = -\Delta \varphi + F'(\varphi) \quad&\text{in }(0,T)\times\OO, \\
                \partial_{\b n}\varphi=m(\varphi)\nabla\mu\cdot\b n=0
                \quad&\text{on }(0,T)\times\partial\OO, \\
				\varphi(\cdot \,, 0) = \varphi_0 \quad&\text{in } \OO;
			\end{cases}
		\end{equation}
		\item the stochastic Cahn--Hilliard equation with Stratonovich transport noise
		\begin{equation} \label{eq:ch_str}
			\begin{cases}
				\d \varphi - \div[m(\varphi)\nabla\mu] \,\d t 
                = \nabla\varphi\cdot\b{\Sigma}\circ\d W
                \quad&\text{in } (0,T)\times\OO, \\
				\mu = -\Delta \varphi + F'(\varphi) \quad&\text{in }(0,T)\times\OO, \\
                \partial_{\b n}\varphi=m(\varphi)\nabla\mu\cdot\b n=0
                \quad&\text{on }(0,T)\times\partial\OO, \\
				\varphi(\cdot \:, 0) = \varphi_0 \quad&\text{in } \OO.
			\end{cases}
		\end{equation}
\end{enumerate}
Here, $W$ is a cylindrical Wiener process on a fixed separable Hilbert space $U$, while $\b\Sigma$ is a linear Hilbert-Schmidt operator
from $U$ with values in a suitable subspace of the space of solenoidal vector fields with null normal component at the boundary $\partial\OO$.

Adopting this framework for the Cahn–Hilliard equation offers two main advantages. First, it provides a physically consistent derivation of the noise term, directly linked to the chaotic, multiscale nature of turbulence, rather than an ad hoc stochastic perturbation. Second, it preserves key structural features of the original, deterministic transport term: in particular, since the noise retains a divergence-free structure at the level of the driving vector fields, one formally expects 
the associated flow to respect conservation properties typical of Cahn-Hilliard dynamics.
This structural compatibility distinguishes transport noise from more general multiplicative noises considered in the literature, whose action on the equation need not respect such conservation properties, and further motivates its use as a natural stochastic counterpart of convective phase-separation.

\subsection{State of the art}
The mathematical literature on deterministic Cahn-Hilliard model has been extensively developed in the last decades:
in this regard, we refer to the 
monograph \cite{mir-CH} and the references therein for a general overview.
Above all, we point out the pioneering contribution \cite{elliott-garcke} on existence of solutions to the Cahn-Hilliard equation with degenerate mobility.
In the context of phase-separation in fluids, 
the deterministic Cahn-Hilliard equation with convection 
has been analyzed in terms of well-posedness 
\cite{col-gil-spr-conv,della-grass} and 
optimal velocity control problems
\cite{col-gil-spr-conv2, col-gil-spr-conv3, 
roc-spr, zhao-liu, zhao-liu2}, while
coupled systems have been treated e.g.~in 
\cite{abels-CHNS, frig-grass-spr, frig-grass-spr2}.

In the stochastic case, the original version of the Cahn-Hilliard equation was firstly proposed by Cook \cite{cook} and has then received considerable attention by the mathematical community.
Starting from the pioneering contribution 
\cite{daprato-deb}, several developments have been obtained in 
\cite{corn, elez-mike, scar-SCH, scar-SVCH, scar-OCSHC}.
For the thermodynamically-relevant Flory-Huggins potential, we refer to the works
\cite{deb-zamb, deb-goud, goud}
dealing with reflection measures, 
\cite{scarpa21} for the case of degenerate mobility with non-conservative noise, 
and \cite{DPGS2024} for the conservative noise case in divergence form. Stochastic Cahn-Hilliard equations with convection have been dealt with in \cite{scar-OVCSCH} in the case of passive phase-separation, while for the active case we refer to \cite{DGS, DPSZ} and \cite{TTM17, TTM19} on the Navier-Stokes equation coupled with a phase equation of Cahn-Hilliard or Allen-Cahn type.
In the context of stochastic phase-field modelling with singular potentials, we also mention the contributions \cite{bauz-bon-leb, Bertacco21, BOS21, SZ, SZ2026}.
In the recent years, considerable attention has been devoted also to the numerical approximation of stochastic Cahn-Hilliard-type equations. In this regard, we refer to 
\cite{Met2025, Met2026, BanMet2026}.

Let us stress that the available literature on the stochastic Cahn-Hilliard equation does not deal with the proposed models 
\eqref{eq:ch_ito}--\eqref{eq:ch_str}
with noise in transport form.
All the available results in the mathematical and physical community focus either on the 
classical Cahn-Hilliard-Cook model with additive noise or on specific multiplicative noise, possibly conservative, but not in transport form.
In this direction, the only available result that we are aware of is the contribution
\cite{FLZ2020}, where a Cahn-Hilliard equation 
with transport noise in Stratonovich form 
was considered as the natural counterpart of a
suitable Hele-Shaw equation perturbed by stochastic forcing: here the equation 
was analyzed from the numerical point of view, with constant mobility and polynomial potential,
suggesting the relevance of this class of perturbations, though without a systematic analytical treatment of well-posedness in the thermodynamically-relevant framework.

\subsection{Aims and novelties of this contribution} 
On account of the well-known connection between stochastic integration à la It\^{o} and à la Stratonovich, we study the models \eqref{eq:ch_ito} and \eqref{eq:ch_str} jointly, by considering a general system with It\^o-type noise and the classical 
Stratonovich correction term, namely 
\begin{equation} \label{eq:ch}
			\begin{cases}
				\d \varphi - \div[m(\varphi)\nabla\mu] \,\d t 
                +\kappa\L\varphi\,\d t
                = \nabla\varphi\cdot\b{\Sigma}\,\d W
                \quad&\text{in } (0,T)\times\OO, \\
				\mu = -\Delta \varphi + F'(\varphi) \quad&\text{in }(0,T)\times\OO, \\
                \partial_{\b n}\varphi=m(\varphi)\nabla\mu\cdot\b n=0
                \quad&\text{on }(0,T)\times\partial\OO, \\
				\varphi(\cdot \:, 0) = \varphi_0 \quad&\text{in } \OO.
			\end{cases}
		\end{equation}
In \eqref{eq:ch}, the operator $\L$
denotes the classical Stratonovich corrector, that will be rigorously introduced later as a linear operator in divergence form. The parameter $\kappa\in\{0,1\}$ encodes the type of noise that is considered in the model: if $\kappa=0$, then equation \eqref{eq:ch} comes down to \eqref{eq:ch_ito} (i.e., with It\^o noise), while if $\kappa=1$ then equation \eqref{eq:ch} 
is equivalent to \eqref{eq:ch_str} (i.e., with Stratonovich noise).

From the mathematical standpoint, the main novelty of this work is twofold. On the one hand, we show that the proposed transport noise is structurally compatible with the two fundamental features of the Cahn–Hilliard dynamics pointed out above, namely the energy balance (i) and
the mass conservation (ii).
Indeed, thanks to the divergence-free structure of the velocity field, it is shown that the transport term
does not affect the conservation of the spatial average of $\varphi$, 
exactly as in the deterministic convective case; moreover, despite the presence of noise, the natural energy functional associated with the system still satisfies a suitable (stochastic) dissipation estimate.
This is not automatic: transport noise, being multiplicative and of gradient type, generically interacts with both the conservative and dissipative structures of the equation. The fact that these are preserved is a genuine indication that transport noise constitutes a physically and mathematically consistent stochastic perturbation of the Cahn–Hilliard system, rather than an arbitrary one. 
This is a feature that, to our knowledge, is not addressed for other types of multiplicative noise considered in the literature in the relevant case of 
singular potential and degenerate mobility.
On the other hand, we carry out the well-posedness analysis in the thermodynamically relevant setting, namely with a singular (Flory–Huggins) potential $F$
 and a mobility $m$
 which may possibly degenerate at the pure phases $\varphi = \pm 1$. More precisely, we establish existence of probabilistically weak solutions both in the case of non-degenerate and degenerate mobility, and, when the mobility is constant, we further prove pathwise uniqueness of solutions, hence obtaining a fully well-posed model in this case. 
 
 The combination of the singular potential, which forces $\varphi$ to remain within the physical range $[-1,1]$, with the low regularity induced by the transport noise requires a careful choice of approximation scheme, 
 since standard techniques developed for either the deterministic convective Cahn–Hilliard equation or the stochastic Cahn–Hilliard equation with additive/multiplicative noise 
 do not directly transfer to the present setting. 
To the best of our knowledge, this is the first work providing a rigorous well-posedness analysis of the Cahn–Hilliard equation with transport noise, both in its Itô and Stratonovich formulations, in its thermodynamically relevant setting.

\subsection{Impact and further developments}
As a first step towards a rigorous stochastic theory of phase separation in the presence of turbulent transport, this work lays the mathematical foundations upon which several natural questions can be built, bridging several well-developed but so far largely disconnected theories.

A first, natural question concerns the effect of transport noise on the coarsening dynamics of the system. In the deterministic setting, the large-time coarsening behavior of the Cahn–Hilliard equation is by now well understood at the level of rigorous upper bounds: for constant mobility, the characteristic length scale $\ell(t)$ 
is known to grow no faster than $t^{1/3}$, corresponding to the Mullins–Sekerka evolution, 
while for degenerate mobility the corresponding rate is $t^{1/4}$, associated with motion by surface diffusion, as established via the energy-dissipation framework of Kohn and Otto \cite{KohnOtto2002}. 
A natural question is then whether, and how, the presence of transport noise, which, as shown in this work, is compatible with the energy dissipation structure of the system, modifies these coarsening rates: one may expect the noise to either accelerate coarsening, by enhancing effective mixing and interface merging, or to slow it down, as small-scale turbulent stirring keeps re-splitting the interface, in analogy with the coarsening arrest phenomena observed in convective Cahn–Hilliard models mentioned above. 
Investigating whether a Kohn-Otto-type upper bound persists, and whether its scaling exponent is affected by the noise, appears to be a natural and challenging extension of the present analysis.

A second, and perhaps more striking, perspective concerns the possible regularising effect of transport noise in the degenerate mobility case. In the deterministic setting, while existence of weak solutions with degenerate mobility is classical \cite{ell-gar}, uniqueness has remained a longstanding open problem for several decades, and is expected to fail in general, as is typical for this class of fourth-order degenerate parabolic equations. In light of the well-known phenomenon of regularisation by transport-noise for nonlinear equations \cite{FGP2010, GessMaurelli2018, FL2021}, 
it is natural to ask whether the presence of transport noise may restore uniqueness, or at least improve the regularity of solutions, in the degenerate mobility setting, where the deterministic theory is known to be insufficient. Establishing such a result would provide a genuinely new instance of regularisation by noise in the context of degenerate fourth-order equations, complementing the existing literature, which has so far primarily addressed regularisation phenomena for first- and second-order equations.

A further, distinctive feature of our approach lies in the generality of the vector field $\bSigma$ entering the transport noise, which is not fixed a priori to a specific structure, but can be chosen depending on the phenomenon one wishes to capture.
This flexibility opens several directions of investigation beyond the present analysis, and reflects, at the stochastic level, the classical dichotomy between passive and active phase separation recalled above.

On the one hand, choosing the components of $\bSigma$ as a divergence-free basis of $L^2(\OO)$
is the natural choice to capture the effective action of a genuinely turbulent, small-scale velocity field, and, as discussed above, is the setting in which one may expect regularisation-by-noise phenomena to take place.
In this case, the noise plays a purely passive role: it models the action of the fluid on the phase variable $\varphi$, without the phase variable itself feeding back into the dynamics of the underlying flow.

On the other hand, a genuinely different choice consists in taking the vector field $\bSigma$ as the fluid velocity field $\bu$,
which is required to satisfy some natural 
fluid-dynamics equations, e.g.~of Navier–Stokes or Euler type.
Such a choice would naturally lead to a coupled system, in which the Cahn–Hilliard equation for $\varphi$ 
perturbed by transport noise driven by $\bu$
is complemented by a stochastic evolution equation for
$\bu$ itself,
in which the phase variable $\varphi$ enters in turn as a forcing term (e.g., through a Korteweg-type stress), mirroring the classical deterministic H-model. 
From a physical standpoint, this would amount to a stochastic model of active phase separation, in which the fluctuating component of the velocity field is not merely postulated externally, but is genuinely generated by the coupled dynamics, allowing the phase-separation process and the small-scale turbulent transport to influence one another. 
From a mathematical standpoint, such a coupling raises substantial new difficulties, since the noise would no longer be a fixed, external perturbation, but would depend on the solution itself, turning the problem into one of transport noise with solution-dependent coefficients. This is a class of stochastic PDEs which, to the best of our knowledge, remains essentially unexplored, and whose analysis lies beyond the scope of the present paper.
We regard the investigation of this active counterpart of the model, together with the coarsening and regularisation questions discussed above, as one of the most promising directions for future research stemming from the present work.

\subsection{Plan of the paper} 
Section \ref{sec:main} illustrates the necessary mathematical preliminaries, such as notation and basic assumptions, as well as the main results of this work. Section \ref{sec:nd} presents the well-posedness analysis in the case of nondegenerate mobility, while Section \ref{sec:d} deals with the degenerate case.

%%%%%%%%%%%%%%%%%%%%%%%%%%%%%%%%%%%%%%%

\section{Preliminaries, assumptions and main results} 
\label{sec:main}
Throughout the paper, $\OO\subset\R^d$ is a Lipschitz bounded domain, 
with $d\in\{1,2,3\}$, and $T>0$ is a fixed final reference time. Moreover, 
$(\Omega,\cF,(\cF_t)_{t\in[0,T]},\P)$ is a
probability space endowed with a filtration
satisfying the usual conditions,
$\cP$ is the progressive $\sigma$-algebra on $\Omega\times[0,T]$, and 
$W$ is a cylindrical Wiener process on a separable Hilbert space $U$, endowed with a fixed orthonormal system $(u_k)_{k\in\enne}$.
	
\subsection{Notation and preliminaries} 
\label{ssec:notation}
For any given Banach space $E$, we denote by the bold 
symbol $\boldsymbol E$ 
the product space $E^d$. The dual space and the respective duality pairing are denoted by
$E^*$ and $\ip{\cdot}{\cdot}_{E^*,E}$.
If $E$ is a Hilbert space, its scalar product 
is indicated by $(\cdot,\cdot)_E$.
Given two separable Hilbert spaces $E_1$ and $E_2$,
the space of Hilbert-Schmidt operators 
$E_1$ to $E_2$ is denoted by $\cL^2(E,F)$,
and is endowed with the natural norm $\norm{\cdot}_{\cL^2(E_1,E_2)}$.

For every Banach space $E$ and $s\in[1,+\infty]$,
we employ the classical symbols $L^s(0,T; E)$
and $C^0([0,T]; E)$
for the usual spaces of strongly measurable Bochner-integrable functions
from $(0,T)$ to $E$, and of strongly continuous functions 
from $[0,T]$ to $E$.
If $E$ is omitted, it is understood that $E = \mathbb{R}$.
For real Sobolev spaces, we use the classical notation
$W^{s,p}(\OO)$, where $s\in\erre$ and $p\in[1,+\infty]$
and we denote by $\norm{\cdot}_{W^{s,p}(\OO)}$ their natural norms. We set $H^s(\OO):=W^{s,2}(\OO)$ for all $s\in\erre$.

We employ the symbol $\laweq$ for equality of laws
of random variables. Moreover, for any Banach space $E$
and $p \geq 1$,
we denote by $L^p(\Omega; E)$ the space of strongly measurable $E$-valued random variables 
with finite $p$th-moment. 
When $E$ is a Banach space of functions depending on time, 
we employ the symbol $L^p_\cP(\Omega;E)$
to stress that measurability is required also 
with respect to $\cP$.
We recall that if $E$ is a separable and reflexive Banach space, for all $s\in(1,+\infty)$ it holds
by \cite[Thm.~8.20.3]{edwards} that 
\[
	\left(L^{\frac{s}{s-1}}(\Omega; L^1(0,T; E))\right)^*
    =L^s_w(\Omega; L^\infty(0,T; E^*)),
\]
where
	\[
	L^s_w(\Omega; L^\infty(0,T; E^*)):=
	\left\{v:\Omega\to L^\infty(0,T; E^*) \text{ weakly*-measurable and }
	\norm{v}_{L^\infty(0,T; E^*)}\in L^s(\Omega)
	\right\}.
	\]

We recall that for the cylinrical Winer process $W$ we have the formal representation
\[
		W = \sum_{k=0}^{\infty} \beta_k u_k,
\]
where $(\beta_k)_{k \in \enne}$ is a sequence of real  independent Brownian motions. 
In order to make the sum rigorous, there always exists 
a Hilbert space $U_0$ such that the inclusion $U \embed U_0$ is Hilbert-Schmidt, hence $W$ amkes rigorous sense as
a stochastic process with trajectories in $C^0([0,T]; U_0)$. Moreover, for every separable Hilbert space $E$
and any $B\in L^2_\cP(\Omega; L^2(0,T;\cL^2(U,H)))$,
the stochastic It\^o integral 
    \[
	\int_0^\cdot B(s)\,\d W(s)
	\]
is well-defined. For details on stochastic integration wee refer to \cite[Subsec.~2.5.2]{LiuRo}.

\subsection{Variational setting}
We define
\[
H:=L^2(\OO), \qquad V_1:=H^1(\OO), \qquad
V_2:=\left\{\psi\in H^2(\OO):\;\partial_{\b n}\psi=0 \text{ a.e.~on } \partial\OO\right\},
\]
endowed with their norms $\norm{\cdot}_H$,
	$\norm{\cdot}_{V_1}$, and $\norm{\cdot}_{V_2}$ respectively.
	The Hilbert space $H$ is identified with its dual, so that we have the variational triplet
	\[
	V_2 \embed V_1\embed H \embed V_1^* \embed V_2^*,
	\]
	where the embeddings are dense and compact.
    Analogously, we introduce the
    zero-mean spaces
	\[
    V_{1,0}^*:=\{\psi\in V_1^*:\:\overline\psi:=\frac1{|\OO|}\ip{\psi}{1}_{V_1^*,V_1}=0\}
    \]
    and 
    \[ 
	H_0 := H\cap V_{1,0}= \left\{ \psi \in H : \overline{\psi} = \dfrac{1}{|\OO|}\int_{\OO} \psi \: \d x = 0\right\}, \qquad V_{1,0} := V_1 \cap H_0,
	\]
	endowed with the norms induced by $V_1^*$, $H$ and $V_1$, respectively.
    hence we still carry over the same notation. 

\subsection{The double-well potential and the mobility function} 
\label{ssec:ass_F_m}
In the following, we shall specify the main assumptions required on the double-well potential $F$ and on the mobility function $m$, starting with the former.
\begin{enumerate}[label = \textbf{(A\arabic*)}, ref = \textbf{(A\arabic*)}]
		\item\label{hyp:potential} 
        The potential $F:(-1,1)\to[0,+\infty)$ 
        admits the decomposition $F= \Psi+ R$,
		where:\vspace{0.3\baselineskip}
        \begin{itemize}
        \item the function $\Psi:(-1, 1) \to [0,+\infty)$, called singular part of $F$, is such that $\Psi\in C^2(-1,1)$. Moreover, it is a strongly convex function such that 
        \[
        \Psi'(0)=0, \qquad
        \lim_{r\to(-1)^+}\Psi'(r)=-\infty, \qquad 
        \lim_{r\to1^-}\Psi'(r)=+\infty;
        \]\vspace{0.15\baselineskip}
        \item the function $R: \erre \to \erre$, called regular part of $F$, is such that $R\in C^2(\erre)$ with bounded second derivative. Moreover, we assume that $R'(0)=0$ and we set
        \[C_R:=\|R''\|_{C^0(\erre)}.\]
        \end{itemize}
        The singular part $\Psi$ uniquely extends to 
        a proper, convex, 
        lower semicontinuous function $\Psi:\erre\to[0,+\infty]$,
        denoted with the same symbol, by simply setting
        \[
        \Psi(r):=+\infty \quad\text{if } r\in\mathbb R\setminus[-1,1], \qquad
        \Psi(-1):=\lim_{r\to(-1)^+}\Psi(r), \qquad
        \Psi(1):=\lim_{r\to1^-}\Psi(r).
        \]
        The potential $F$ is then accordingly extended, and, with no loss of generality,
        we also assume that it is nonnegative, namely $F:\erre\to[0,+\infty]$.
\end{enumerate}\noindent
Concerning the mobility function, we will assume the following hypotheses, depending on whether we are dealing with the non-degenerate or degenerate case. Here and in the following, to distinguish between the two, we employ the labels ``nd'' and ``d'' to refer to the non-degenerate and degenerate cases, respectively. Precisely, 
in the non-degenerate case we introduce the assumption
\begin{enumerate}[start=2,
label = \textbf{(A\arabic*)$_{\mathrm{nd}}$}, 
ref = \textbf{(A\arabic*)$_{\mathrm{nd}}$}]
		\item\label{hyp:mobility_nd} The mobility
        $m:[-1,1]\to[0,+\infty)$ is continuous, and there exist two constants $c_m,\,C_m>0$ such that 
        \[
        0<c_m\leq m(r)\leq C_m \quad\text{for all } r\in[-1,1];
        \]
\end{enumerate}
while, in the degenerate case, we assume
\begin{enumerate}[start=2,
label = \textbf{(A\arabic*)$_{\mathrm{d}}$}, 
ref = \textbf{(A\arabic*)$_{\mathrm{d}}$}]
		\item\label{hyp:mobility_d} The mobility
        $m:[-1,1]\to[0,+\infty)$ is Lipschitz-continuous,
        with $m(1)=m(-1)=0$,
        and there exist constants $C_m >0$ and $k_m>0$ such that 
        \[
        0< m(r)\leq C_m \quad\text{for all } r\in(-1,1),
        \qquad
        m(r)\Psi''(r)\geq k_m  \quad\text{for all } r\in(-1,1).
        \]
        Furthermore, we assume that $mF''\in L^\infty(-1,1)$ and extends to a continuous function on $[-1,1]$.
\end{enumerate}
As usual when dealing with variable mobility, we introduce the 
functional $M:(-1,1)\to[0,+\infty)$ as
\[
  M(r):=\int_0^r\int_0^s\frac1{m(t)}\,\d t\,\d s, \quad r\in(-1,1),
\]
so that $M\in C^2(-1,1)$ with $M''=\frac1{m}$, and trivially $mM''\in L^\infty(-1,1)$. Note that the second inequality in 
\ref{hyp:mobility_d} yields that the growth of
$M''=\frac{1}{m}$ at $\pm1$
is controlled from above by $\Psi''$.
\begin{remark}
    Let us point out that the assumptions above
    are classical in the context of Cahn-Hilliard equations with degenerate mobility. The relevant logarithmic potential $F_{\text{log}}$ and the degenerate mobility $m_{\text{deg}}$ defined in \eqref{F_log}
    clearly satisfy \ref{hyp:potential} and \ref{hyp:mobility_d}. Indeed, an immediate computation shows that 
    \[
    F_{\log}''(x)=\frac{\theta}{1-x^2}-\theta_0=
    \frac{\theta}{m_{\text{deg}}(x)} -\theta_0
    \quad\forall\,x\in(-1,1),
    \]
    so that in the specific example one has exactly
    $m_{\text{deg}}\Psi''_{\text{log}}\equiv\theta$.
\end{remark}

\subsection{The transport noise} 
\label{ssec:ass_noise}
In this subsection, we precise the main requirements assumed on the noise.
For $d \in \{2, 3\}$, let us define the critical Sobolev exponent $r_* := \frac{d}{2}$,
and, accordingly, for any $r > r_*$ let us define the spaces
\[
\W:=\left\{\b{u}\in \bH^\andrea{r}(\OO):\; \div\b{u}=0 \text{ in } \OO,\,
\b{u}\cdot\b{n}=0 \text{ on } \partial\OO\right\}\!,\]
endowed with the structure of normed space inherited by $\b H^r(\OO)$, and denote by $C_\infty = C_\infty(r)$ the norm of the continuous inclusion $\b H^r(\OO)\embed \b L^\infty(\OO)$. The operator $\b\Sigma: \Omega \times[0,T] \to \cL^2(U, \W)$ is completely determined by its pointwise actions on the complete orthonormal system $\seq{u}{k} \subset U$, i.e., by the family of random process $\seq{\b \sigma}{k}$ such that
    \[
  \b\Sigma(\omega,t)[u_k]:=\b{\sigma}_k(\omega,t),
  \qquad \forall \: (\omega,t)\in\Omega\times[0,T]
    \]
for all $k \in \mathbb N$. It is therefore natural to formulate the needed assumptions in terms of $\seq{\b \sigma}{k}$. In particular, in the non-degenerate case, we require
\begin{enumerate}[start=3,label = \textbf{(A\arabic*)$_\mathrm{nd}$}, 
ref = \textbf{(A\arabic*)$_\mathrm{nd}$}]
		\item\label{hyp:noise} 
The family of random vector fields $\seq{\b \sigma}{k}$ is a sequence of $\W$-valued progressively measurable processes for some $r > r_*$ and satisfies
    \[
    \sup_{t\in[0,T]}
    \sum_{k\in\enne}
    \|\b\sigma_k(t)\|^2_\bH \in L^{\frac q2}(\Omega), \qquad
    \exp\left(\int_0^T\left(
    \sum_{k\in\enne}
    \|\b\sigma_k(t)\|^2_{\W}
    \right)^{2}\,\d t\right)
    \in L^\ell(\Omega),
    \]
    for some fixed $q>2$ and for every $\ell\in[1,+\infty\mathclose)$;
\end{enumerate}
while in the degenerate case we assume
\begin{enumerate}[start=3,label = \textbf{(A\arabic*)$_\mathrm{d}$}, 
ref = \textbf{(A\arabic*)$_\mathrm{d}$}]
    \item\label{hyp:noise_d} 
The family of random vector fields $\seq{\b \sigma}{k}$ is a sequence of $\W$-valued progressively measurable processes for some $r > r_*$ and there exists a constant
$C_{\b\Sigma}>0$ such that
    \[
    \sum_{k\in\enne}
    \|\b\sigma_k(t)\|^2_{\W}
    \leq C_{\b\Sigma}^2\quad\text{a.e.~in } \Omega\times(0,T),
    \]
and $C_{\b\Sigma}<\frac{2k_m}{C_\infty^2}$ if $\kappa=0$. 
\end{enumerate}
On account of the previous assumptions, the process $\b \Sigma$ enjoys some regularity properties. More precisely, under Assumption \ref{hyp:noise}, it holds that
\[
\b\Sigma\in L^q_\cP(\Omega; 
L^\infty(0,T; \cL^2(U,\bH))\cap L^4(0,T; \cL^2(U,\W)))
\]
and
\[
    \exp\left(\norm{\b\Sigma}^4_{
    L^4(0,T;\cL^2(U,\W))}\right)
    \in L^\ell(\Omega) \qquad\forall\,\ell\in[1,+\infty\mathclose),
\]
while under Assumption \ref{hyp:noise_d} we also have 
\[
\b\Sigma\in L^\infty_\cP(\Omega\times(0,T);\cL^2(U,\W)).
\]
It is also convenient to define the multiplicative noise coefficient in transport form as an operator, i.e., 
	\[
	G:\Omega\times[0,T]\times V_1\to \cL^2(U,H)
	\]
    given by
    \[
    G(\omega,t,\psi)[u_k]:=\nabla\psi\cdot\b\sigma_k(\omega,t)
    \qquad \forall \: k\in\enne, \qquad \forall \: (\omega,t,\psi)\in\Omega\times[0,T]\times V_1.
    \]
    More compactly, we will employ the contracted notation 
    \[
    G(\omega,t,\psi)=\nabla\psi\cdot\b\Sigma(\omega,t)
    \qquad \forall \: (\omega,t,\psi)\in\Omega\times[0,T]\times V_1.
    \]
\begin{remark}[Deterministic time-independent velocities]
    A possible choice of vector fields satisfying 
    \ref{hyp:noise} is given by a family 
    $\seq{\b \sigma}{k}\subseteq\W$, non-random and independent of time,
    such that 
    \[
    C_{\b\Sigma}^2:=
    \sum_{k\in\enne}\|\b\sigma_k\|^2_{\W}<+\infty.
    \]
    If such series satisfies also the smallness condition
    $C_{\b\Sigma}\in(0, \frac{2k_m}{C_\infty^2})$ then also \ref{hyp:noise_d} holds.
    Note that the regularity $\W$
    is natural in the context of valocity fields since, for example,
    one has that $D(\b A)\subseteq \W$,
    where $D(\b A)$ is the effective domain of the Stokes operator on $\OO$.
    A particular case of this choice is given by 
    $\b\sigma_k=\alpha_k\b u$,
    where $\seq{\alpha}{k}\in\ell^2$ and 
    $\b u\in D(\b A)$ (or $\b u\in \W$).
\end{remark}
\begin{remark}[Deterministic time-dependent velocities]
    Assumption \ref{hyp:noise} is also satisfied 
    by a family of non-random vector fields,
    possibly dependent of time 
    $\seq{\b \sigma}{k} \subset L^\infty(0,T;\bH)\cap L^4(0,T; \W)$, such that 
    \[
    \sup_{t\in[0,T]}\sum_{k\in\enne}\|\b\sigma_k(t)\|^2_{\bH}
    +\int_0^T \left(
    \sum_{k\in\enne}\|\b\sigma_k(t)\|^2_{\W}
    \right)^2\,\d t
    <+\infty.
    \]
    A particular case is given by
    $\b\sigma_k=\alpha_k\b u$, where $(\alpha_k)_k\in\ell^2$
    and $\b u\in L^\infty(0,T;\bH)\cap L^4(0,T;\W)$. 
    Let us spend a few words on the regularity 
    required on $\b u$ and compare it 
    to the typical parabolic regularity generally achieved in PDEs arising from fluid dynamics. Let $\bHs \subset \b L^2(\OO)$ denote the space of square-integrable solenoidal vector fields and set
    \[
    \mathbb W := \bigcup_{r >r_*} \W.
    \]
    In the two-dimensional case we have $\mathbb W = \b H^{1+}(\OO) \cap \bHs$ and the regularity $\b u\in L^4(0,T; \b H^{1+}(\OO))$ is achieved, for example by interpolation, by any function $\b u$ solving the deterministic Navier-Stokes equation with initial datum $\b u_0\in \bH^{\frac12+}(\OO)$, since  
    \[
    L^\infty(0,T; \bH^{\frac12+}(\OO))\cap L^2(0,T; \bH^{\frac32}(\OO))
    \subseteq L^4(0,T; \bH^{1+}(\OO)).
    \]
    Therefore, in two dimensions, the regularity required by Assumption \ref{hyp:noise} on the velocity fields $\seq{\b \sigma}{k}$ 
    is then somehow in between the typical  parabolic regularities 
    of weak and strong solutions to \andreap{the Navier-Stokes \cite{koch_tataru_2001_wellposedness}}.
    As one may expect, \andreap{in the three-dimensional case the situation is worse \cite{koch_tataru_2001_wellposedness,planchon_1996_global_strong_sobolev, kato_1984_strong_lp},}  since
    one now has that $\mathbb W = \b H^{\frac 32+}(\OO) \cap \bHs$, and
    the regularity $\b u\in L^4(0,T; H^{\frac32+}(\OO))$
    is achieved by a solution $\b u$ to a deterministic Navier-Stokes equation
    with initial datum $\b u_0\in \bH^{1+}(\OO)$, since  
    \[
    L^\infty(0,T; \bH^{1+}(\OO))\cap L^2(0,T; \bH^{2}(\OO))
    \subseteq L^4(0,T; \bH^{\frac32+}(\OO)).
    \]
    The regularity required by Assumption \ref{hyp:noise} on the velocity fields $\seq{\b \sigma}{k}$ is then slightly stronger than the 
    typical parabolic regularity of strong solutions to the Navier--Stokes equations. In the case of degenerate mobility, Assumption \ref{hyp:noise_d} forces a smallness condition on the $\ell^2$-norm of $\seq{\alpha}{k}$ and the regularity 
    $\b u\in L^\infty(0,T; \W)$. 
    In two dimensions, this regularity requirement is met for example by a solution $\b u$ to a deterministic Navier--Stokes equation with initial datum in $\bH^{1+}(\OO)$, hence with a slightly stronger regularity 
    than a \andreap{strong solution \cite{fujita_kato_1964_initial_value,koch_tataru_2001_wellposedness}}. In three dimensions, 
    the initial datum is required to lie in 
    $\bH^{\frac32+}(\OO)$ and the needed regularity of $\b u$ is significantly stronger than the one of \andreap{strong solutions
    to the Navier--Stokes equation \cite{planchon_1996_global_strong_sobolev, koch_tataru_2001_wellposedness}}. For these reasons, the most natural setting for the degenerate mobility case is actually the one of non-random, time-independent velocities described in the previous remark.
%    \luca{to do: Controllare tutte le regolarità di NS con dati iniziali Sobolev belli e aggiungere referenze per regolarità.Cercare se ci sono analoghi risultati per Eulero e commentare.}
\end{remark}
	
\begin{remark}[Properties of the multiplicative noise operator]
\label{rmk:prop_noise}
    Let us note straightaway that the noise coefficient is 
    pathwise mass-conserving, meaning that
    \[
    G:\Omega\times[0,T]\times V_1\to \cL^2(U,H_0).
    \]
    This follows from the fact that, for every $k\in\enne$
    and $\psi\in V_1$, it holds that 
    \[
    \int_\OO\nabla\psi(x)\cdot\b\sigma_k(x)\,\d x
    =\int_\OO\div[\psi\b\sigma_k](x)\,\d x
    =\int_{\partial\OO}\psi(x)\b\sigma_k(x)\cdot\b{n}(x)\,\d S(x)=0,
    \]
    thanks to the the definition of the space $\W$.
    Furthermore, we point out that the restriction of $G$ to 
    $\Omega\times[0,T]\times V_2$ enjoys more regularity, namely it holds that 
    \[
    G:\Omega\times[0,T]\times V_2\to \cL^2(U,V_{1,0}).
    \]
    Indeed, for every $k\in\enne$ and $\psi\in V_2$ one has that 
    \[
    \nabla[\nabla\psi\cdot\b\sigma_k]
    =(D^2\psi)\b\sigma_k + (D\b\sigma_k)\nabla\psi
    \]
    so that, by recalling the embedding $V_1\embed L^s(\OO)$
    for all $s\geq1$ if $d=2$ and for $s=6$ if $d=3$, 
    the H\"older inequality yields
    \[
    \norm{\nabla[\nabla\psi\cdot\b\sigma_k]}_{\bH}
    \leq \|\psi\|_{V_2}\|\b\sigma_k\|_{\b{L}^\infty(\OO)}+
    \|D\b\sigma_k\|_{\b{L}^\andrea{p}(\OO)}\|\psi\|_{V_2},
    \]
    where $p>2$ if $d=2$ and $p=3$ if $d=3$. It follows from
    \ref{hyp:noise} that  
    \[
    \norm{G(\omega,t,\psi)}_{\cL^2(U,V_1)}\leq L_{\b\Sigma}(\omega,t)\|\psi\|_{V_2}
    \quad\forall\,(\omega,t,\psi)\in\Omega\times[0,T]\times V_2,
    \]
    where the process $L_{\b\Sigma}\in L^q_\cP(\Omega; L^2(0,T))$ under 
    \ref{hyp:noise} and $L_{\b\Sigma}\in L^\infty_\cP(\Omega\times(0,T))$ under \ref{hyp:noise_d}.
\end{remark}

\subsection{The Stratonovich correction}
The Stratonovich correction operator 
$\L:\Omega\times[0,T]\times V_1\to V_1^*$ is defined in variational form as 
\[
\ip{\L(\omega,t,\psi_1)}{\psi_2}_{V_1}:=
\frac12\sum_{k\in\enne}\int_\OO
[\nabla\psi_1(x)\cdot\b{\sigma}_k(\omega,t,x)]
[\nabla\psi_2(x)\cdot\b{\sigma}_k(\omega,t,x)]\:\d x,
\]
for all $(\omega,t)\in\Omega\times[0,T]$ and all $\psi_1,\,\psi_2\in V_1$.
Let us note that $\L$ is actually well defined 
both under
\ref{hyp:noise} and \ref{hyp:noise_d}.
In particular, it is immediate to check that  
\[
  \|\L(\omega,t,\psi)\|_{V_1^*} \leq L^2_{\b\Sigma}(\omega,t)\|\nabla\psi\|_{\bH}
  \qquad\forall\,(\omega,t,\psi)\in \Omega\times[0,T]\times V_1
\]
and 
\[
  \ip{\L(\omega,t,\psi)}{\psi}_{V_1}=
\frac12\sum_{k\in\enne}
\norm{\nabla\psi\cdot\b{\sigma}_k(\omega,t,x)}_H^2\geq0
\qquad \forall\,(\omega,t,\psi)\in \Omega\times[0,T]\times V_1.
\]
Furthermore, it is immediate to see that the operator $\L$ is also pathwise mass-conservative, 
i.e., that we have $\L:\Omega\times[0,T]\times V_1\to V_{1,0}^*$. Analogously to the transport noise operator discussed above, 
it is possible to see via integration by parts that the restriction of the 
Stratonovich correction $\L$ to $V_2$ enjoys more regularity, 
i.e.,~one has the strong formulation 
$\L:\Omega\times[0,T]\times V_2\to H_0$ given by
\[
\L(\omega,t,\psi)=-\frac12\sum_{k\in\enne}
[\b{\sigma}_k(\omega,t)\cdot\nabla][
\b{\sigma}_k(\omega,t)\cdot\nabla\psi],
\qquad \forall \: (\omega,t,\psi)\in \Omega\times[0,T]\times V_2.
\]
Finally, we point out a notational remark. If 
$\widehat{\b\Sigma}$ is a stochastic process, possibly defined on another 
filtered probability space $(\widehat\Omega, \widehat\cF, (\widehat\cF_t)_{t\in[0,T]}, \widehat\P)$, such that 
$\b\Sigma\laweq\widehat{\b\Sigma}$, then
we will denote the corresponding induced Stratonovich correction operator by the symbol
$\hL:\widehat\Omega\times[0,T]\times V_1\to V_1^*$. Clearly, all the considerations made above still hold for $\hL$ by equality of laws.

    \subsection{Main results: the case of non-degenerate mobility}
	First, we state the concepts of probabilistically strong and weak solutions
    for problem \eqref{eq:ch} in the case of non-degenerate mobility.
	\begin{defin} 
    \label{def:sol-nd}
    Let $\kappa\in\{0,1\}$ and let Assumptions \ref{hyp:potential}, \ref{hyp:mobility_nd} and \ref{hyp:noise} hold. Let further
		\begin{equation}
		    \label{init}
		\varphi_0 \in V_1, \qquad
		F(\varphi_0) \in L^1(\OO), \qquad
		\overline{\varphi_0}\in(-1,1).
		\end{equation}
    \begin{enumerate}[(i)] \itemsep0.5em
    \item \label{def:strong-nd}A probabilistically-strong solution to the
		Cahn--Hilliard equation \eqref{eq:ch} 
        with non-degenerate mobility starting from the initial datum $\varphi_0$ is a stochastic process $\varphi$ such that
		\begin{align}
			\label{phi}
			&\varphi \in L^p_\cP(\Omega; C^0([0,T]; H))\cap
			L^p_w(\Omega; L^\infty(0,T; V_1)) \cap
			L^p_\cP(\Omega; L^2(0,T; V_2),\\
			& |{\varphi}(\omega, t,x)| < 1
			\text{ for a.a.~}(\omega,t,x) 
            \in \Omega \times (0,T)\times \OO, \\
			\label{mu}
			&\mu:=-\Delta\varphi+F'(\varphi)
			\in L^{\frac p2}_\cP(\Omega; L^2(0,T; V_1)),
			\\
			\label{initial}
			&\varphi(0) = \varphi_0,
		\end{align}
		for every $p\in[2,q)$, and
		\begin{multline} \label{eq:ch-str-nd}
			( \varphi(t),\psi)_H +
			\int_0^t\left[\int_{\OO}
			m(\varphi(s))\nabla  \mu(s)\cdot \nabla \psi
            \,\d x
            +\kappa\ip{\L(s,\varphi(s))}{\psi}_{V_1}
            \right]\,\d s\\
			= ( \varphi(0),\psi)_{H} +
			\left(\int_0^t \nabla\varphi(s)\cdot\b\Sigma(s)\,\d  W(s), \psi\right)_{H}
		\end{multline}
        for every $\psi\in V_1$, $t \in [0,T]$ and ${\P}$-almost surely.
		\item A probabilistically-weak solution to the
		Cahn--Hilliard equation \eqref{eq:ch} 
        with non-degenerate mobility starting from the initial datum $\varphi_0$ is a a family $(\widehat\Omega,\, \widehat\cF,\,(\widehat\cF_t)_{t\in[0,T]},\,
    \widehat\P,\,
	\widehat W,\, \widehat{\b\Sigma},\, \widehat\varphi)$,
	where $(\widehat\Omega, \widehat\cF, (\widehat\cF_t)_{t\in[0,T]}, \widehat\P)$
	is a filtered
	probability space satisfying the usual conditions, $\widehat W$
	is a cylindrical Wiener processes on $U$, 
    the processes $\widehat\varphi$ and $\widehat{\b\Sigma}$ satisfy       
		\begin{align}
			\label{phi_hat}
			&\widehat\varphi \in L^p_\cP(\widehat\Omega; C^0([0,T]; H))\cap
			L^p_w(\widehat\Omega; L^\infty(0,T; V_1)) \cap
			L^p_\cP(\widehat\Omega; L^2(0,T; V_2)\andreap{)},\\
			& |\widehat{\varphi}(\omega, t,x)| < 1
			\text{ for a.a.~}(\omega,t,x) 
            \in \widehat\Omega \times (0,T)\times \OO, \\
			\label{mu_hat}
			&\widehat\mu:=-\Delta\widehat\varphi+F'(\widehat\varphi)
			\in L^{\frac p2}_\cP(\widehat\Omega; L^2(0,T; V_1)),
			\\
			\label{initial_hat}
			&\widehat\varphi(0) = \varphi_0,\\
            \label{sigma_hat}
        &\widehat{\b\Sigma} \in L^q_\cP(\widehat\Omega; L^2(0,T; \cL^2(U,\W))), \qquad
        \widehat{\b\Sigma}\laweq \b\Sigma,
		\end{align}
		for every $p\in[2,q)$, and
		\begin{multline} \label{eq:ch-wea-nd}
			( \widehat\varphi(t),\psi)_H +
			\int_0^t\left[\int_{\OO}
			m(\widehat\varphi(s))\nabla \widehat\mu(s)\cdot \nabla \psi\,\d x
            +\kappa\ip{\hL(s,\widehat\varphi(s))}{\psi}_{V_1}\right]\,\d s\\
			= ( \widehat\varphi(0),\psi)_{H} +
			\left(\int_0^t \nabla\widehat\varphi(s)\cdot
            \widehat{\b\Sigma}(s)\,\d  \widehat W(s), \psi\right)_{H}
		\end{multline}
		for every $\psi\in V_1$, $t \in [0,T]$ and $\widehat\P$-almost surely.
        \end{enumerate}
	\end{defin} \noindent
    Next, we present the main existence and uniqueness result in the case of non-degenerate mobility.
    \begin{thm}
        \label{th:nd}
        Let $\kappa\in\{0,1\}$ and let Assumptions \ref{hyp:potential}, \ref{hyp:mobility_nd} and \ref{hyp:noise} hold.
        Then, the following hold:
        \begin{enumerate}[(i)] \itemsep0.3em
        \item for every $\varphi_0$ satisfying \eqref{init},
        the Cahn--Hilliard equation \eqref{eq:ch} 
        with non-degenerate mobility 
        admits a probabilistically-weak solution starting from the initial datum $\varphi_0$,
        in the sense of Definition~\ref{def:sol-nd};
        \item if the mobility $m$ is constant, the
        for every $\varphi_0$ satisfying \eqref{init},
        the Cahn--Hilliard equation \eqref{eq:ch} 
        with non-degenerate mobility 
        admits a unique probabilistically-strong solution,
        starting from the initial datum $\varphi_0$,
        in the sense of Definition~\ref{def:sol-nd};
        \item if the mobility $m$ is constant, then
        for every $\varphi_{0,1},\,\varphi_{0,2}$ satisfying \eqref{init} with $\overline{\varphi_{0,1}}=\overline{\varphi_{0,2}}$
        and for every $\b\Sigma_1,\,\b\Sigma_2$ satisfying \ref{hyp:noise}, 
        for every $p\in[2,q)$ there exists a constant 
        $C_p>0$ such that 
        the respective probabilistically-strong solutions 
        $\varphi_1$ and $\varphi_2$
        of the Cahn--Hilliard equation \eqref{eq:ch} 
        with non-degenerate mobility 
        satisfy 
    \begin{multline}
        \label{cont_dep}
    \qquad \qquad \norm{\varphi_1-\varphi_2}_{L^p_\cP(\Omega;C^0([0,T]; V_1^*)\cap L^2(0,T; V_1))}\leq C_p\left[
    \norm{\varphi_{0,1}-\varphi_{0,2}}_{V_1^*}\right.\\
    \left.+\kappa
    \norm{\b\Sigma_1-\b\Sigma_2}_{L^q_\cP(\Omega; L^2(0,T; \cL^2(U,\W)))}+
    \norm{\b\Sigma_1-\b\Sigma_2}_{L^q_\cP(\Omega; L^2(0,T; \cL^2(U,\bH)))}
    \right].
    \end{multline}
    \end{enumerate}
    \end{thm}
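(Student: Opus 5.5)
Existence in (i) will come from a regularization, uniform estimates and a stochastic compactness argument. Replace $\Psi$ by its Moreau--Yosida regularization $\Psi_\lambda$ (so that $F_\lambda'=\Psi_\lambda'+R'$ is Lipschitz) and add a small viscous/elliptic regularization if needed. Then solve the approximate problem by a Faedo--Galerkin scheme on the eigenfunctions of the Neumann Laplacian. The transport noise $\nabla\varphi\cdot\b\Sigma$ is linear, so the finite-dimensional SDEs have Lipschitz coefficients on bounded sets and are globally solvable.

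The key step is the energy (Itô) estimate. Apply Itô's formula to $\mathcal E_\lambda(\varphi)=\frac12\|\nabla\varphi\|_{\bH}^2+\int_\OO F_\lambda(\varphi)$, and in parallel to $\frac12\|\varphi-\overline{\varphi_0}\|^2_{V_1^*}$, which controls mass and is needed for the mean of $\mu$. The drift gives the dissipation $\int_\OO m(\varphi)|\nabla\mu|^2$. The noise enters through three terms:
\begin{itemize}
\item the martingale term $(\mu,\nabla\varphi\cdot\b\Sigma\,\d W)$;
\item the Itô correction $\frac12\sum_k\big[\|\nabla(\nabla\varphi\cdot\b\sigma_k)\|^2_{\bH}+\int_\OO F''_\lambda(\varphi)|\nabla\varphi\cdot\b\sigma_k|^2\big]$;
\item for $\kappa=1$, the corrector $-\ip{\L\varphi}{\mu}$.
\end{itemize}
Here the transport structure is used. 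Since $\div\b\sigma_k=0$ and $\b\sigma_k\cdot\b n=0$, one has $\int_\OO \Psi'_\lambda(\varphi)\nabla\varphi\cdot\b\sigma_k=\int_\OO\b\sigma_k\cdot\nabla\Psi_\lambda(\varphi)=0$, so the singular part of the martingale term vanishes. Likewise, in the Stratonovich case the terms $\int F''_\lambda(\varphi)|\nabla\varphi\cdot\b\sigma_k|^2$ cancel exactly against $\ip{\L\varphi}{F'_\lambda(\varphi)}$. In the Itô case that term has a bad sign, but it can be rewritten through the same integration by parts.

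The remaining terms involve second derivatives, $(D^2\varphi)\b\sigma_k$. These are bounded by $\|\b\sigma_k\|_{\W}^2\|\varphi\|_{V_2}^2$ via Remark~\ref{rmk:prop_noise}, and then $\|\varphi\|_{V_2}^2\lesssim\|\nabla\varphi\|_{\bH}\|\nabla\mu\|_{\bH}+\dots$ by the monotonicity of $\Psi'_\lambda$ tested against $-\Delta\varphi$. Young's inequality absorbs $c_m\|\nabla\mu\|^2$ and leaves $(\sum_k\|\b\sigma_k\|^2_{\W})^2\|\nabla\varphi\|^2$. A stochastic Gronwall lemma with weight $\exp(\int_0^t(\sum_k\|\b\sigma_k\|_{\W}^2)^2)$ then closes the estimate. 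This is exactly why \ref{hyp:noise} requires exponential moments, and why the estimates hold only for $p<q$. I expect this step, controlling the second-order noise corrections uniformly in $\lambda$ without losing the singular-potential structure, to be the main obstacle.

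From there, standard arguments follow:
\begin{itemize}
\item the bound on $\int_\OO\mu$ via $\overline{\varphi_0}\in(-1,1)$, the Miranville--Zelik inequality, and $\|\Psi'_\lambda(\varphi)\|_{L^2}$ bounds;
\item fractional time regularity from the equation plus the Burkholder--Davis--Gundy inequality, giving tightness in $C^0([0,T];H)\cap L^2(0,T;V_1)$;
\item Skorokhod--Jakubowski representation, including $\b\Sigma$ in the joint law;
\item identification of the limit: martingale identification for the stochastic integral, and maximal monotonicity of $\partial\Psi$ giving $|\widehat\varphi|<1$ a.e. and $\Psi'_\lambda(\varphi_\lambda)\to\Psi'(\widehat\varphi)$.
\end{itemize}

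For (ii) and (iii), with $m$ constant, take the difference $\varphi=\varphi_1-\varphi_2$, which has zero mean, and apply Itô's formula to $\frac12\|\varphi\|_{V_1^*}^2$ (norm via the inverse Neumann Laplacian $\mathcal N$). The drift gives $m\|\nabla\varphi\|^2$ plus $m(\Psi'(\varphi_1)-\Psi'(\varphi_2),\varphi)\ge0$, while the $R'$ part is controlled by $C_R\|\varphi\|_H^2\le\delta\|\nabla\varphi\|^2+C\|\varphi\|_{V_1^*}^2$. The noise difference $\nabla\varphi\cdot\b\Sigma_1+\nabla\varphi_2\cdot(\b\Sigma_1-\b\Sigma_2)$ contributes an Itô correction of order $\|\b\Sigma_1\|^2_{\cL^2(U,\b L^\infty)}\|\varphi\|_{H}^2$, again handled by interpolation and Young's inequality, plus terms in $\b\Sigma_1-\b\Sigma_2$ measured in $\bH$ against $\|\nabla\varphi_2\|$. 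For $\kappa=1$, the corrector difference produces the $\W$-norm of $\b\Sigma_1-\b\Sigma_2$ paired with $\varphi_2\in L^2(V_2)$. A stochastic Gronwall argument with random weight gives \eqref{cont_dep}, hence pathwise uniqueness. Yamada--Watanabe (Gyöngy--Krylov) then upgrades the weak solutions of (i) to a unique probabilistically-strong solution.
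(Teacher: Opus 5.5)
Your argument is sound in substance, but it follows a genuinely different route from the paper.

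\textbf{The paper's route.} It uses a three-level scheme. Besides the Yosida and Galerkin steps, it mollifies the mobility to $m_n$. You need this too: $m$ is only continuous, so your Galerkin coefficients are not locally Lipschitz as claimed. It also replaces $\b\Sigma$ by a bounded smooth $\b\Sigma^\eps$, which lets it close the Galerkin estimates crudely, with $\eps$-dependent constants and no exact cancellation. The martingale is written as $-(\nabla\mu_n,\varphi_n\b\Sigma^\eps_n\,\d W)_{\bH}$ and bounded through $\norm{\varphi_n}_H\norm{\b\Sigma^\eps}_{\cL^2(U,\b L^\infty(\OO))}$. The resulting $\sup_t\norm{\varphi_n}_H$ is controlled by It\^o's formula for $\int_\OO M_n(\varphi_n)\,\d x$ with $M_n''=1/m_n$. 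Only after $\lambda\to0^+$, once $|\varphi^\eps|\le1$, does the paper introduce the exponential weight. It then estimates the martingale via $\norm{\varphi^\eps\b\Sigma}_{\cL^2(U,\bH)}\le\norm{\b\Sigma}_{\cL^2(U,\bH)}$. This is where $\sup_t\norm{\b\Sigma}_{\cL^2(U,\bH)}\in L^q(\Omega)$ is used, and why only $p<q$ is reached. The restriction does not come from the weight, which has moments of all orders, contrary to what you say.

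\textbf{Your route.} You remove the potential part of the martingale by $\int_\OO F'_\lambda(\varphi)\nabla\varphi\cdot\b\sigma_k\,\d x=0$. For $\kappa=1$ you cancel the $F''_\lambda$-terms exactly against $\ip{\L\varphi}{F'_\lambda(\varphi)}$. The remainder $(-\Delta\varphi,\nabla\varphi\cdot\b\sigma_k)_H$ has quadratic variation at most $C\norm{\nabla\varphi}_{\bH}^2\norm{\varphi}_{V_2}^2\norm{\b\Sigma}^2_{\cL^2(U,\W)}$. The weighted argument absorbs this provided you keep $\int_0^t\norm{\b\Sigma}^2_{\cL^2(U,\W)}\norm{\varphi}^2_{V_2}\,\d s$ on the left-hand side, as the paper does with its $K$-multiple of the $-\Delta\varphi$ test.

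\textbf{What each buys.} Your route saves one approximation level. It needs no $L^\infty$ bound and no It\^o formula for the singular energy. It needs no lower-order functional at all, since $\overline{\varphi}$ is conserved. Your parallel $V_1^*$-estimate is therefore superfluous, and for non-constant $m$ it is not even coercive. The price is that your cancellations are exact only for the continuous $\lambda$-problem. On $H_n$ the chemical potential contains $P_nF'_\lambda(\varphi_n)$ and the noise is projected. You must therefore either control commutator errors such as $((I-P_n)F'_\lambda(\varphi_n),\nabla\varphi_n\cdot\b\sigma_k)_H$, which vanish as $n\to\infty$ only for fixed $\lambda$, or derive the $\lambda$-uniform bound after letting $n\to\infty$.

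\textbf{A step to correct.} In the It\^o case, the term $\frac12\sum_k\int_\OO F''_\lambda(\varphi)|\nabla\varphi\cdot\b\sigma_k|^2\,\d x$ cannot be handled by integrating by parts along $\b\sigma_k$. That produces $\norm{F'_\lambda(\varphi)}_H\norm{\varphi}_{V_2}\norm{\b\Sigma}^2_{\cL^2(U,\W)}$. Since $\norm{F'_\lambda(\varphi)}_H$ is only controlled through $\norm{\mu}_H$, hence through $\overline{\mu}$ and the Miranville--Zelik bound, you end up with a term of order $\norm{\b\Sigma}^2_{\cL^2(U,\W)}\norm{\nabla\mu}^2_{\bH}\norm{\nabla\varphi}^2_{\bH}$ that cannot be absorbed. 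Bound it instead by $\frac12C_\infty^2\norm{\b\Sigma}^2_{\cL^2(U,\W)}\int_\OO(\Psi''_\lambda(\varphi)+C_R)|\nabla\varphi|^2\,\d x$. Then use that the same $-\Delta\varphi$ test you invoke for $\norm{\varphi}_{V_2}$ also controls $\int_\OO\Psi''_\lambda(\varphi)|\nabla\varphi|^2\,\d x$ by $\norm{\nabla\mu}_{\bH}\norm{\nabla\varphi}_{\bH}+C_R\norm{\nabla\varphi}^2_{\bH}$, as the paper does.

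\textbf{A minor point in (iii).} The $\bH$-norm of $\b\Sigma_1-\b\Sigma_2$ in the It\^o correction comes from $\nabla\varphi_2\cdot(\b\sigma^1_k-\b\sigma^2_k)=\div\bigl(\varphi_2(\b\sigma^1_k-\b\sigma^2_k)\bigr)$ together with $|\varphi_2|\le1$. It does not come from pairing with $\norm{\nabla\varphi_2}_{\bH}$, which would require an $L^3$-type norm of the difference.
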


	\subsection{Main results: the case of degenerate mobility}
	Analogously, we state here the concept of probabilistically strong and weak solutions
    for problem \eqref{eq:ch} in the case of degenerate mobility. The main idea to handle the degeneracy of the mobility consists in formally plugging the equation for $\mu$ in \eqref{eq:ch} directly into the equation for $\varphi$. The resulting formulation 
    does not involve the variable $\mu$ anymore, as one may expect, and the resulting terms arising from such substitution (see equation \eqref{eq:ch2-str-d} below) are under control thanks to assumption
    \ref{hyp:mobility_d}. For further details on 
    the concept of solution in case of degenerate mobility we refer the reader to \cite{ell-gar, scarpa21}.
\begin{defin}
    \label{def:sol-d}
    Let $\kappa\in\{0,1\}$ and let Assumptions \ref{hyp:potential}, \ref{hyp:mobility_d} and \ref{hyp:noise_d} hold. Let further
		\begin{equation}
		    \label{init_d}
		\varphi_0 \in V_1, \qquad
		F(\varphi_0) \in L^1(\OO), \qquad
        M(\varphi_0) \in L^1(\OO).
		\end{equation}
    \begin{enumerate}[(i)] \itemsep0.5em
    \item A probabilistically-strong solution to the
		Cahn--Hilliard equation \eqref{eq:ch} 
        with degenerate mobility starting from the initial datum $\varphi_0$ is a pair of stochastic processes $(\varphi,\,\b j)$ such that
		\begin{align}
			\label{phi_d}
			&\varphi \in L^p_\cP(\Omega; C^0([0,T]; H))\cap
			\luca{L^p_w(\Omega;L^\infty(0,T; V_1))} \cap
			L^p_\cP(\Omega; L^2(0,T; V_2)),\\
			& |{\varphi}(\omega, t,x)| \leq 1
			\text{ for a.a.~}(\omega,t,x) 
            \in \Omega \times (0,T)\times \OO, \\
			\label{j_d}
			&\b j
			\in L^{p}_\cP(\Omega; L^2(0,T; \b H)),
			\\
			\label{initial_d}
			&\varphi(0) = \varphi_0,
		\end{align}
		for every $p\in[2,+\infty)$, where
		\begin{multline} \label{eq:ch-str-d}
			( \varphi(t),\psi)_H +
			\int_0^t\left[\int_{\OO}
			\b j(s)\cdot \nabla \psi\,\d x
            +\kappa\ip{\L(s,\varphi(s))}{\psi}_{V_1}
            \right]\,\d s\\
    		  = ( \varphi(0),\psi)_{H} +
			\left(\int_0^t \nabla\varphi(s)\cdot\b\Sigma(s)\,\d  W(s), \psi\right)_{H}
        \end{multline}
        for every $\psi\in V_1$, $t \in [0,T]$, $\P$-almost surely, and 
        \begin{align} 
        \label{eq:ch2-str-d}
            \int_{\OO}
			\b j\cdot \b\zeta\,\d {x}
            &=\int_{\OO}
            m(\varphi)\Delta\varphi\div\b\zeta \: \d x
            +\int_{\OO}\left[
            m'(\varphi)\Delta\varphi
            +m(\varphi)F''(\varphi)\right]
            \nabla\varphi\cdot\b\zeta \: \d x
		\end{align}
        for every $\b\zeta\in \b{V}_1$, almost everywhere in $(0,T)$, $\P$-almost surely.
		\item A probabilistically-weak solution to the
		Cahn--Hilliard equation \eqref{eq:ch} 
        with degenerate mobility starting from the initial datum $\varphi_0$ is a a family $(\widehat\Omega,\, \widehat\cF,\, (\widehat\cF_t)_{t\in[0,T]},\,
    \widehat\P,\,
	\widehat W,\, \widehat{\b\Sigma},\, \widehat\varphi,\, \widehat{\b j})$,
	where $(\widehat\Omega,\, \widehat\cF,\,(\widehat\cF_t)_{t\in[0,T]},\, \widehat\P)$
	is a filtered probability space satisfying the usual conditions, $\widehat W$
	is a cylindrical Wiener processes on $U$, 
    the processes $\widehat\varphi$, $\widehat{\b j}$ and $\widehat{\b\Sigma}$ satisfy       
		\begin{align}
			\label{phi_hat_d}
			&\widehat\varphi \in L^p_\cP(\widehat\Omega; C^0([0,T]; H))\cap
			\luca{L^p_w(\widehat\Omega;L^\infty(0,T; V_1))} \cap
			L^p_\cP(\widehat\Omega; L^2(0,T; V_2)),\\
			& |\widehat{\varphi}(\omega, t,x)| \leq 1
			\text{ for a.a.~}(\omega,t,x) 
            \in \widehat\Omega \times (0,T)\times \OO, \\
			\label{j_d_hat}
			&\widehat{\b j}
			\in L^{p}_\cP(\widehat\Omega; L^2(0,T; \b H)),\\
			\label{initial_hat_d}
			&\widehat\varphi(0) = \varphi_0,\\
            \label{sigma_hat_d}
        &\widehat{\b\Sigma} \in L^q_\cP(\widehat\Omega; L^2(0,T; \cL^2(U,\W))), \qquad
        \widehat{\b\Sigma}\laweq \b\Sigma,
		\end{align}
		for every $p\in[2,+\infty)$, where
		\begin{multline} \label{eq:ch-wea-d}
			( \widehat\varphi(t),\psi)_H +
			\int_0^t\left[\int_{\OO}
			\widehat{\b j}(s)\cdot \nabla \psi\, \d x
            +\kappa\ip{\hL(s,\widehat\varphi(s))}{\psi}_{V_1}
            \right]\,\d s\\
			= ( \widehat\varphi(0),\psi)_{H} +
			\left(\int_0^t \nabla\widehat\varphi(s)\cdot
            \widehat{\b\Sigma}(s)\,\d  \widehat W(s), \psi\right)_{H}
		\end{multline}
		for every $\psi\in V_1$, $t \in [0,T]$, $\widehat\P$-almost surely,
        and 
        \begin{align} 
        \label{eq:ch2-wea-d}
            \int_{\OO}
			\widehat{\b j}\cdot \b\zeta\,\d {x}
            &=\int_{\OO}
            m(\widehat{\varphi})\Delta\widehat{\varphi}\div\b\zeta \: \d x
            +\int_{\OO}\left[
            m'(\widehat{\varphi})\Delta\widehat{\varphi}
            +m(\widehat{\varphi})F''(\widehat{\varphi})\right]
            \nabla\widehat{\varphi}\cdot\b\zeta \: \d x
		\end{align}
        for every $\b\zeta\in \b{V}_1$, almost everywhere in $(0,T)$, $\widehat{\P}$-almost surely.
        \end{enumerate}
	\end{defin} \noindent
    Then, the well-posedness result in the case of degenerate mobility reads as follows.
    \begin{thm}
        \label{th:d}
        Let $\kappa\in\{0,1\}$ and let Assumptions \ref{hyp:potential}, \ref{hyp:mobility_d} and \ref{hyp:noise_d} hold. Then, for every $\varphi_0$ satisfying \eqref{init_d},
        the Cahn--Hilliard equation \eqref{eq:ch} 
        with degenerate mobility 
        admits a probabilistically-weak solution
        starting from the initial datum $\varphi_0$,
        in the sense of Definition~\ref{def:sol-d},
        that also satisfies, for every $p\in[2,+\infty)$,
        \[
        \luca{F(\widehat\varphi)
        \in L^p_w(\widehat\Omega;L^\infty(0,T; L^1(\OO))),
        \qquad
        M(\widehat\varphi)\in \luca{L^p_w(\Omega;L^\infty(0,T; V_1))}.}
        \]
        In particular, if either 
        \[\lim_{|r|\to1^-}F(r)=+\infty \text{   or   }
        \lim_{|r|\to1^-}M(r)=+\infty,\] 
        then it also holds that
        \[
            |\widehat{\varphi}(\omega, t,x)| < 1
			\text{ for a.a.~}(\omega,t,x) 
            \in \widehat\Omega \times (0,T)\times \OO.
        \]
    \end{thm}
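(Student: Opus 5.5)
We plan to prove Theorem~\ref{th:d} by approximating the degenerate mobility with non-degenerate ones and passing to the limit, as in \cite{ell-gar, scarpa21}, using Theorem~\ref{th:nd} for the approximating problems.

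\textbf{Approximation.} For $\eps\in(0,1)$ we set
\[
m_\eps(r):=m\bigl(\max\{-1+\eps,\min\{r,1-\eps\}\}\bigr),
\]
which satisfies \ref{hyp:mobility_nd} with $c_{m_\eps}=\min\{m(\pm(1-\eps))\}>0$. We pair it with a matched potential $\Psi_\eps$, obtained by a Moreau--Yosida regularisation or by quadratic extension of $\Psi$ outside $[-1+\eps,1-\eps]$. The point of this choice is to retain the structural inequality $m_\eps\Psi_\eps''\ge k_m$ uniformly in $\eps$. We also set $M_\eps:=\int_0^\cdot\int_0^s m_\eps^{-1}$. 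If the initial mean constraint $\overline{\varphi_0}\in(-1,1)$ is not implied by \eqref{init_d}, we first regularise $\varphi_0$, using that $M(\varphi_0)\in L^1$ forces $|\varphi_0|\le1$. Since \ref{hyp:noise_d} implies \ref{hyp:noise} for every $q$, Theorem~\ref{th:nd}(i) then yields weak solutions $\varphi_\eps$, with chemical potential $\mu_\eps$, of the $\eps$-problem.

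\textbf{Uniform estimates.} We apply It\^o's formula to two functionals.
\begin{enumerate}[(a)]
\item The free energy $\mathcal E$. The transport noise enters through the It\^o correction $\frac12\sum_k\int|\nabla(\nabla\varphi\cdot\b\sigma_k)|^2$ and through the terms $\sum_k\int F''(\varphi)|\nabla\varphi\cdot\b\sigma_k|^2$. If $\kappa=1$, the Stratonovich corrector $-\kappa\langle\L\varphi,\mu\rangle$ cancels these exactly; formally, the Stratonovich transport by divergence-free fields preserves $\int F(\varphi)$ and $\|\nabla\varphi\|^2$ up to commutators involving $D\b\sigma_k$, which are controlled by $C_{\b\Sigma}\|\varphi\|_{V_1}^2$.
\item The entropy $\int M_\eps(\varphi_\eps)$. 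The drift yields the dissipation $\int|\Delta\varphi_\eps|^2+\int m_\eps F_\eps''|\nabla\varphi_\eps|^2\ge \|\Delta\varphi_\eps\|^2+k_m\|\nabla\varphi_\eps\|^2-C$.
\end{enumerate}

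\textbf{Main obstacle.} The delicate term is the It\^o correction in (b) when $\kappa=0$, namely $\frac12\sum_k\int M_\eps''(\varphi_\eps)|\nabla\varphi_\eps\cdot\b\sigma_k|^2\le \frac{C_\infty^2C_{\b\Sigma}^2}{2}\int m_\eps^{-1}|\nabla\varphi_\eps|^2$. This is not obviously absorbed, and this is precisely where the smallness $C_{\b\Sigma}<2k_m/C_\infty^2$ must be used. We would try coupling (a) and (b). The energy dissipation contains $\int m_\eps|\nabla\mu_\eps|^2$, and the cross term $\int\Psi_\eps''|\nabla\varphi|^2$ arising in (b) dominates $k_m\int m_\eps^{-1}|\nabla\varphi|^2$, because $\Psi''\ge k_m/m$. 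Absorbing the correction thus reduces to a comparison of constants, which we expect to match the stated threshold. When $\kappa=1$ the correction cancels and no smallness is needed.

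\textbf{Combined bounds.} Together with the Burkholder--Davis--Gundy inequality (the martingale parts have vanishing means, and $L^\infty$ bounds on $\b\Sigma$ hold) and Gronwall's lemma, we obtain moment bounds for every $p$ on:
\begin{itemize}
\item $\varphi_\eps$ in $L^\infty(0,T;V_1)\cap L^2(0,T;V_2)$;
\item $\sup_t\int F_\eps(\varphi_\eps)$ and $\sup_t\int M_\eps(\varphi_\eps)$;
\item $\b j_\eps:=m_\eps(\varphi_\eps)\nabla\mu_\eps$ in $L^2(0,T;\b H)$, via $\int m_\eps|\nabla\mu_\eps|^2$.
\end{itemize}
The bound on $M_\eps(\varphi_\eps)$ also gives $|\varphi|\le1$ in the limit, since $M_\eps$ grows quadratically outside $[-1+\eps,1-\eps]$ with coefficient $\to\infty$. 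Finally, we obtain fractional time regularity of $\varphi_\eps$ in $V_1^*$ from the equation.

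\textbf{Passage to the limit.} We use tightness on $C^0([0,T];H)\cap L^2(0,T;V_1)\times C^0([0,T];U_0)$ together with the law of $\b\Sigma$. Prokhorov and Skorokhod (or the Jakubowski variant) give a new probability space with $\widehat\varphi_\eps\to\widehat\varphi$ strongly in $L^2(0,T;V_1)$ and a.e., and $\widehat{\b j}_\eps\rightharpoonup\widehat{\b j}$. We identify \eqref{eq:ch2-wea-d} by writing $\b j_\eps$ in its expanded form. There $m_\eps(\widehat\varphi_\eps)\to m(\widehat\varphi)$ and $m_\eps F_\eps''(\widehat\varphi_\eps)\to mF''(\widehat\varphi)$ boundedly a.e., using continuity of $mF''$ on $[-1,1]$, while $\Delta\widehat\varphi_\eps\rightharpoonup\Delta\widehat\varphi$ weakly in $L^2$. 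The stochastic integral converges by standard martingale identification arguments.

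The integrability properties of $F(\widehat\varphi)$ and $M(\widehat\varphi)$ follow from lower semicontinuity. If $F$ or $M$ blows up at $\pm1$, their integrability gives $|\widehat\varphi|<1$ a.e.
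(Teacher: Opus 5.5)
Your overall architecture (entropy estimate, energy estimate, flux $\b j$, Skorokhod, identification of $\b j$ through its expanded form, Fatou for $F$ and $M$) matches the paper's. However, there is a genuine gap in the approximation step.

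You pair the truncated mobility with a quadratically extended (or Yosida-regularised) $\Psi_\eps$, and then claim that Theorem~\ref{th:nd}(i) produces the approximating solutions. Such a $\Psi_\eps$ is finite on all of $\erre$, so it does not satisfy \ref{hyp:potential}. The non-degenerate theory genuinely uses that singularity:
\begin{itemize}
\item Definition~\ref{def:sol-nd} requires $|\varphi|<1$.
\item The uniform energy estimate of Subsection~\ref{ssec:est_eps} bounds the stochastic integral through \eqref{eq:Linf_est}, i.e.\ $\|\varphi\|_{L^\infty(\OO)}\le1$, which is derived from $\Psi'(\varphi)\in H$.
\end{itemize}
So your $\varphi_\eps$ are not supplied by anything proved. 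You would have to redo the non-degenerate existence proof for regular potentials. This is feasible under \ref{hyp:noise_d}, since bounded noise lets you close the BDG term with $\|\varphi\|_H$, but it is extra work.

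Keeping $\Psi$ singular while truncating $m$ does not fix this cheaply either. On $\{|r|>1-\eps\}$ you would need $m(\pm(1-\eps))\Psi''(r)\ge k_m$, which requires monotonicity of $m$ or $\Psi''$ near $\pm1$ that \ref{hyp:mobility_d} does not give.

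The paper avoids all of this by taking $m_\delta:=m+\delta$ with the original singular $F$. Then:
\begin{itemize}
\item \ref{hyp:potential}, \ref{hyp:mobility_nd} (with $c_{m_\delta}=\delta$) and \ref{hyp:noise} hold, so Theorem~\ref{th:nd} applies verbatim.
\item $|\varphi_\delta|\le1$ holds at every level.
\item $M_\delta\le M$ controls the initial entropy.
\item $M_\delta''=\frac1{m+\delta}\le\frac1m\le\frac{\Psi''}{k_m}$ is immediate.
\end{itemize}

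Your estimates also need correcting in three places.
\begin{itemize}
\item \emph{The dissipation in (b) is misstated.} Since $m_\eps M_\eps''\equiv1$, the drift produces $\|\Delta\varphi\|_H^2+\int_\OO F_\eps''(\varphi)|\nabla\varphi|^2$, with the full singular weight. It is not $\int_\OO m_\eps F_\eps''|\nabla\varphi|^2\ge k_m\|\nabla\varphi\|_{\bH}^2-C$. A lower bound of that kind could never absorb $\frac{C_\infty^2C_{\bSigma}^2}{2}\int_\OO m^{-1}|\nabla\varphi|^2$. With the correct term, the absorption for $\kappa=0$ happens inside (b) alone, via $\frac12\sum_k\int_\OO M_\eps''(\varphi)|\nabla\varphi\cdot\bsigma_k|^2\le\frac{C_\infty^2C_{\bSigma}^2}{2k_m}\int_\OO\Psi''(\varphi)|\nabla\varphi|^2$. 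This is exactly the paper's argument. No coupling with (a) is needed, and the condition it yields is $C_\infty^2C_{\bSigma}^2<2k_m$.
\item \emph{The martingale term in (a)} cannot be treated as in the non-degenerate case, because only $m^{1/2}\nabla\mu$ is uniformly controlled. You need the identity $(F'(\varphi),\nabla\varphi\cdot\bsigma_k)_H=(\nabla F(\varphi),\bsigma_k)_{\bH}=0$ to replace $\mu$ by $-\Delta\varphi$. BDG then closes with the $L^2(0,T;V_2)$ bound from (b).
\item \emph{Your $\kappa=1$ commutators} contain $D^2\varphi$. They are therefore bounded by $\|\varphi\|_{V_2}$-type quantities, not by $C_{\bSigma}\|\varphi\|_{V_1}^2$; this is harmless given (b). The paper sidesteps the formal cancellation altogether. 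It bounds the corrector cross term by $\frac14\|m_\delta^{1/2}\nabla\mu_\delta\|_{\bH}^2+C\int_\OO\Psi''|\nabla\varphi|^2$, using $m_\delta M_\delta''=1$.
\end{itemize}
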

    \begin{remark}
        Let us point out that in the classical case of logarithmic Flory-Huggins potential \eqref{F_log}
        and degenerate mobility \eqref{m_deg},
        the last condition is not satisfied and one can only infer that $|\varphi|\leq1$ almost everywhere.
        Nonetheless, it is worthwhile to note that there are other choices of relevant potentials and degenerate mobilities that ensure instead such condition. For example, one can take either Lennard-Jones-type potentials in the form 
        \[
        \Psi_{\beta}(x):=\frac1{(1-x^2)^\beta}, \quad x\in(-1,1),
        \]
        for some $\beta>0$,
        or more degenerate mobilities in the form 
        \[
        m_{\alpha}(x):=(1-x^2)^\alpha, \quad 
        x\in[-1,1],
        \]
        for some $\alpha>1$. It is immediate to check also that assumption \ref{hyp:mobility_d} is satisfied by choosing $\alpha=\beta-2$.
    \end{remark}
    
\section{Proof of Theorem \ref{th:nd}}
\label{sec:nd}
This section is devoted to showing the well-posedness of problem \eqref{eq:ch} in the case of non-degenerate mobilities, i.e., to proving Theorem~\ref{th:nd}.
The proof of existence is performed in several steps throughout Subsections~\ref{ssec:approx}--\ref{ssec:limit_eps}, by employing a three-level approximation procedure and stochastic compactness arguments.
The uniqueness and continuous dependence part of the the theorem is eventually presented in Subsection~\ref{ssec:uniq}.

\subsection{The approximated problem}
\label{ssec:approx}
In this Subsection, we present the three-level approximation scheme to set up a compactness argument.

\subsubsection{Regularization of the transport field}
First, we introduce a regularization of $\b\Sigma$. More precisely, our aim is to introduce a family of stochastic processes $\{\b \Sigma^\eps\}_{\eps \in (0,1)}$ in such a way that the following properties hold:
\begin{enumerate}[(i)] \itemsep0.3em
    \item the process $\b \Sigma^\eps \in L^\infty_\cP(\Omega;L^\infty(0,T; \cL^2(U,\bH^2(\OO) \cap \W)))$ for all $\eps \in (0,1)$,
    \item the convergence $\b \Sigma ^\eps \to \b \Sigma$ holds in $L^q_\cP(\Omega;L^\infty(0,T;\cL^2(U;H)) \cap L^4(0,T;\cL^2(U;\W)))$ as $\eps \to 0^+$;
    \item letting $\b \sigma^\eps_k := \b \Sigma^\eps u_k$ for all $k \in \mathbb N$, it holds $\b\sigma^\eps_k \in 
L^\infty_\cP(\Omega; L^\infty(0,T; \bH^2(\OO)\cap \W))$ and
    \[
  \norm{\b\sigma^\eps_k}_{\bH}\leq
    \norm{\b\sigma_k}_{\bH},
    \quad
\norm{\b\sigma^\eps_k}_{\W}\leq
    \norm{\b\sigma_k}_{\W},
    \quad
\norm{\b\sigma^\eps_k}_{\b{L}^\infty(\OO)}\leq
    \norm{\b\sigma_k}_{\b{L}^\infty(\OO)}
  \quad\text{a.e.~in } \Omega\times(0,T),
\]
for all $k\in\enne$ and $\eps\in(0,1)$, 
as well as, in the limit $\eps\to0^+$,
\[
\b\sigma^\eps_k\to\b\sigma_k
\quad\text{in } \W,
\quad\text{a.e.~in } \Omega\times(0,T)
\]
for all $k \in \mathbb N$.
\end{enumerate}
Let us point out that such an approximation scheme exists and can be constructed by combining a truncation argument in $\Omega \times (0,T)$ with
classical singular perturbation techniques and elliptic regularity.
%for example, for every $\eps\in(0,1)$ and $k\in\enne$,
%one can set $\b\sigma_k^\eps$ as the unique solution 
%to the elliptic problem
%\[
%  \b\sigma_k^\eps + \eps \b A\b\sigma^\eps_k = \b\sigma_k,
%\]
%where $\b A$ is the Stokes operator on $\bH_\sigma$. \andrea{If $\b I$ is the identity operator on $\bHs$, then} this corresponds to setting $\b\sigma_k^\eps:=(\b{I}+\eps \b A)^{-1}\b\sigma_k$,
%and it is immediate to check that the properties \andrea{listed above} are satisfied.
%With this notation, for every $\eps\in(0,1)$, \andrea{we can define accordingly} the operator \andrea{$\b\Sigma^\eps
%\in L^\infty_{\cP}(\Omega; L^\infty(0,T; \cL^2(U,\W)))$} as
%\[
%  \b\Sigma^\eps u_k:=\b\sigma^\eps_{k}, \quad k\in\enne,
%\]
Accordingly, we also define the regularized Stratonovich correction
operator $\L^\eps:\Omega\times[0,T]\times V_1\to V_1^*$ as
\[
\ip{\L^\eps(\omega,t,\psi_1)}{\psi_2}:=
\frac12\sum_{k\in\enne}\int_\OO
[\nabla\psi_1(x)\cdot\b{\sigma}^\eps_k(\omega,t,x)]
[\nabla\psi_2(x)\cdot\b{\sigma}^\eps_k(\omega,t,x)]\,\d x,
\]
for $(\omega,t)\in\Omega\times[0,T]$ and $\psi_1,\,\psi_2\in V_1$.

\subsubsection{Regularization of the singular potential}
The second approximation scheme concerns a regularization of the singular part $\Psi$. Under Assumption \ref{hyp:potential}, for every $\lambda\in(0,1)$, we denote by 
$\Psi_\lambda:\erre\to[0,+\infty)$ the Moreau--Yosida regularization of the 
convex function $\Psi:\erre\to[0,+\infty]$. Let us recall that $\Psi_\lambda\in C^{1,1}(\erre)$ and $\Psi_\lambda'$
is the Yosida approximation of $\Psi'$ as a maximal monotone graph 
in $\erre\times\erre$. Indeed, letting $J_\lambda:\erre\to(-1,1)$ denote
the resolvent of $\Psi'$, namely $J_\lambda:=(I+\lambda\Psi')^{-1}$, it holds $\Psi_\lambda'(r)=\Psi'(J_\lambda(r))$ for all $r\in\erre$. 
Accordingly, we define the approximated potential
$F_\lambda:\erre\to[0,+\infty)$ as
$F_\lambda(r):=\Psi_\lambda(r) + R(r)$, $r\in\erre$. 
For classical results of monotone analysis we refer e.g.~to \cite{barbu-monot}.

\subsubsection{A discretization scheme}
Finally, we set up a Galerkin discretization scheme, exploiting the spectral properties of the negative Neumann--Laplace operator. Indeed, let $\seq{e}{n}$ denote a complete orthonormal system of $H$ made of eigenvectors of the negative Laplacian with homogeneous 
Neumann boundary conditions, and let $\seq{\lambda}{n}$ be the sequence of the respective eigenvalues.
For every $n\in\enne$, let 
$H_n:=\operatorname{span}\{e_1,\ldots,e_n\}\subset V_2$,
endowed with the $\norm{\,\cdot\,}_H$-norm, and let $P_n\in\cL(H,H_n)$
be the orthogonal projection on $H_n$. 
Analogously, let $\b{H}_n:=H_n^d$ the vector-valued Galerkin space
and let $\b{P}_n\in\cL(\b H, \b{H}_n)$ be the corresponding orthogonal projection. For every $\eps\in(0,1)$ and 
$n\in\enne$, we define the operator 
$\b\Sigma^\eps_n:=\b{P}_n\circ\b\Sigma^\eps
\in L^\infty_{\cP}(\Omega; L^\infty(0,T; \cL^2(U,\bH_n)))$ as
\[
  \b\Sigma_n^\eps u_k:=\b{P}_n\b\sigma^\eps_{k} \qquad \forall \: k\in\enne.
\]
On the same line, for every
$\eps\in(0,1)$ and $n\in\enne$,
we define the 
finite-dimensional Stratonovich correction operator 
$\Ln:\Omega\times[0,T]\times H_n\to H_n$ 
as 
\[
(\Ln(\omega,t,\psi_1),\,\psi_2)_{H_n}:=
\frac12\sum_{k=1}^n\int_\OO
[\b{\sigma}^\eps_k(\omega,t,x)\cdot\nabla \psi_1(x)]
[\b{\sigma}^\eps_k(\omega,t,x)\cdot\nabla\psi_2(x)]\:\d x,
\]
for $(\omega,t)\in\Omega\times[0,T]$ and $\psi_1,\,\psi_2\in H_n$. 
Note that $\Ln$ can be written also in strong 
formulation as 
\[
\Ln(\omega,t,\psi)=
-\frac12\sum_{k=1}^n [\b\sigma^\eps_k(\omega,t,x)\cdot \nabla] 
[\b{\sigma}^\eps_k(\omega,t,x)\cdot\nabla \psi(x)]
\qquad\forall\,
(\omega,t,\psi)\in\Omega\times[0,T]\times H_n.
\]
Finally, let $\rho \in C^\infty_0(\mathbb R)$ be a fixed standard mollifier, i.e., so that
\[
\rho(r) \geq 0 \quad \forall\: r \in \mathbb R, \qquad \int_\mathbb R \rho(r) \: \d r = 1.
\]
Defining, as customary, the sequence of mollifiers $\rho_n:\mathbb R\to \mathbb R$ as
\[
\rho_n(r) = n\rho(nr)
\]
for all $r \in \mathbb R$, we can consider then the approximated mobility $m_n:\mathbb R\to \mathbb R$ defined by
\[
m_n(r) = (\rho_n*m)(r)
\]
for all $r \in \mathbb R$. It is standard matter to show that $m_n$ is Lipschitz-continuous (albeit not uniformly in $n$), and that $m_n \to m$ in $L^p(\mathbb R)$ for all finite $p \geq 1$ and even uniformly on compact sets of $\mathbb R$. Moreover, observe that by positivity and unitary mass of $\rho_n$ it also holds that
\[
0 < c_m \leq m_n(r) \leq C_m
\]
for all $r \in \mathbb R$, i.e., the approximated mobility enjoys the bound prescribed by Assumption \ref{hyp:mobility_nd}. Accordingly, for all $n \in \mathbb N$, we define the second-order primitive
\[
M_n: \mathbb R\to \mathbb R \qquad M_n(r) = \int_0^r\int_0^s \dfrac{1}{m_n(t)} \: \d t \ \d s
\]
for all $r \in \mathbb R$.

\subsubsection{The approximated problem} 
In light of the previous schemes, we are in a position to consider the following approximated problem, 
depending on the three parameters $\eps$, $\lambda$, and $n$. In particular, the velocity fields are smoothed out 
through the $\eps$-regularization, the nonlinearity $F'$ is replaced by its $\lambda$-Yosida approximation, and the equation is projected on the finite-dimensional space $H_n$. The resulting problem reads
\begin{numcases}{}
  \label{eq1_app}
  \d\varphi^\eps_{\lambda,n} 
  - \div P_n\left[m_n(\varphi^\eps_{\lambda,n})
  \nabla\mu^\eps_{\lambda,n}\right]\,\d t + 
  \kappa\Ln\varphi^\eps_{\lambda,n}\,\d t
  =\nabla \varphi^\eps_{\lambda,n}\cdot \b\Sigma^\eps_n\,\d W
  \quad&\text{in }$(0,T)\times\OO$,\\
  \label{eq2_app}
  \mu^\eps_{\lambda,n}=-\Delta\varphi^\eps_{\lambda,n} + P_nF_{\lambda}'(\varphi^\eps_{\lambda,n})
  \quad&\text{in }$(0,T)\times\OO,$\\
  \label{eq3_app}
  \partial_{\b n}\varphi^\eps_{\lambda,n} = 
  m(\varphi^\eps_{\lambda,n})\partial_{\b n}
  \mu^\eps_{\lambda,n} = 0
  \quad&\text{in }$(0,T)\times\partial\OO$,\\
  \label{eq4_app}
  \varphi^\eps_{\lambda,n}(0)=P_n\varphi_0
  \quad&\text{in }$\OO$.
\end{numcases}
For every $\eps,\,\lambda\in(0,1)$ and $n\in\enne$, 
we look for a solution to 
\eqref{eq1_app}-\eqref{eq4_app} in the form
\[
  \varphi^\eps_{\lambda,n}=
  \sum_{j=1}^n a_j^{\eps,\lambda,n} e_j, \qquad
  \mu^\eps_{\lambda,n}=\sum_{j=1}^n b_j^{\eps,\lambda,n} e_j,
\]
for some vector-valued processes 
\[
\b a^{\eps}_{\lambda,n}:=(a_{1}^{\eps,\lambda,n},\ldots, a_n^{\eps,\lambda,n}):\Omega\times[0,T]\to\erre^{n}, \qquad
\b b^{\eps}_{\lambda,n}:=(b_\andrea{1}^{\eps,\lambda,n},\ldots,b_n^{\eps,\lambda,n}):\Omega\times[0,T]\to\erre^n.
\]
By exploiting the Lipschitz-continuity of 
$F'_\lambda$ and $m_n$,
it is a standard matter to check that the approximated system \eqref{eq1_app}--\eqref{eq4_app} comes down to a system of SDEs for $\b a^{\eps}_{\lambda,n}$ and 
$\b b^{\eps}_{\lambda,n}$ with Lipschitz-continuous coefficients. Therefore, we infer that \eqref{eq1_app}--\eqref{eq4_app} admits a unique
probabilistically and analitically strong solution 
\[
  \varphi^\eps_{\lambda,n},\,\mu^\eps_{\lambda,n} \in L^\ell_\cP(\Omega; C^0([0,T]; H_n))
  \qquad\forall\,\ell\in\mathopen[2,+\infty\mathclose).
\]

\subsection{Uniform estimates with respect to $\boldsymbol{n}$}
\label{ssec:est_n}
Throughout this subsection, we present in due detail the uniform estimates needed to retrieve a solution to problem \eqref{eq:ch} in the case of nondegenerate mobility. Here and in the following, the symbol $C$ is reserved for a positive constant depending on the structural parameters of the problem. As usual, its value may change within the same argument without relabeling and relevant dependencies will be highlighted if necessary. Let $\lambda \in (0,1)$ and $\eps \in (0,1)$ be fixed.

\subsubsection{The energy estimate} 
Leveraging the gradient flow structure of the deterministic part of the system, let us define the regularized free energy functional
\[
\mathcal E_\lambda: \operatorname{dom} \mathcal E_\lambda \subset V_1 \to \mathbb R \qquad \mathcal E_\lambda(v) = \int_\OO \dfrac 12 |\nabla v|^2 + F_\lambda(v) \: \d x.
\]
As customary in the theory of diffuse interface models, the crucial estimate is given by an energy inequality for the system. Applying the It\^o formula for the regularized free energy functional evaluated at $\varphi^\eps_{\lambda,n}(t)$ yields 
\begin{multline}
\label{ito:en_app}
\frac12\norm{\nabla\varphi^\eps_{\lambda,n}(t)}_{\bH}^2+\norm{F_\lambda(\varphi^\eps_{\lambda,n}(t))}_{L^1(\OO)}
    +\int_0^t\int_\OO
    m_n(\varphi^\eps_{\lambda,n}(s))
    |\nabla\mu^\eps_{\lambda,n}(s)|^2\,\d x\,\d s \\+\frac\kappa2\int_0^t\sum_{k=0}^n
    \int_\OO
    [\nabla \varphi^\eps_{\lambda,n}(s)\cdot \b{\sigma}^\eps_k(s)]
[\nabla\mu^\eps_{\lambda,n}(s)\cdot \b{\sigma}^\eps_k(s)]\,\d x\,\d s\\
=\frac12\norm{\nabla P_n\varphi_0}_{\bH}^2
    +\norm{F_\lambda(P_n\varphi_0)}_{L^1(\OO)}
    +\int_0^t\left(\mu^\eps_{\lambda,n}(s), \nabla \varphi^\eps_{\lambda,n}(s)
    \cdot\b\Sigma^\eps_n(s)\,\d W(s)\right)_{H}\\
    +\frac12\int_0^t\sum_{k=0}^n
    \int_\OO\left[
    |\nabla[\nabla \varphi^\eps_{\lambda,n}(s)\cdot\b\sigma^\eps_k(s)]|^2+F_\lambda''(\varphi^\eps_{\lambda,n}(s))
    |\nabla \varphi^\eps_{\lambda,n}(s)\cdot\b\sigma^\eps_k(s)|^2
    \right]\,\d x\,\d s
\end{multline}
for every $t\in[0,T]$, $\P$-almost surely. 
On the left-hand side, by the properties of the approximated mobility one has that 
\[
\int_0^t\int_\OO
    m_n(\varphi^\eps_{\lambda,n}(s))
    |\nabla\mu^\eps_{\lambda,n}(s)|^2\,\d x\,\d s
    \geq c_m\norm{\nabla\mu^\eps_{\lambda,n}}^2_{L^2(0,t;\bH)},
\]
while the H\"older and Young inequalities and Assumption \ref{hyp:noise} imply that 
\begin{align*}
    &\frac\kappa2\int_0^t\sum_{k=0}^n
    \int_\OO
    |\nabla \varphi^\eps_{\lambda,n}(s)\cdot \b{\sigma}^\eps_k(s)|
|\nabla\mu^\eps_{\lambda,n}(s)\cdot \b{\sigma}^\eps_k(s)|\,\d x\, \d s\\
&\hspace{2cm}\leq\frac\kappa2\int_0^t
\sum_{k\in\enne}\norm{\b\sigma_k^\eps(s)}_{\b{L}^\infty(\OO)}^2
\norm{\nabla\varphi^\eps_{\lambda,n}(s)}_\bH
\norm{\nabla\mu^\eps_{\lambda, n}}_\bH\,\d s\\
&\hspace{2cm}\leq\frac{c_m}4\norm{\nabla\mu^\eps_{\lambda,n}}_{L^2(0,t;\bH)}^2
+\frac{\kappa^2C_\infty^4}{4c_m}\int_0^t\left(
\sum_{k\in\enne}\norm{\b\sigma_k^\eps(s)}_{\W}^2
\right)^2
\norm{\nabla\varphi^\eps_{\lambda,n}(s)}_\bH^2\,\d s.
\end{align*}
On the right-hand side, by the contraction properties of $P_n$ it holds that 
\[
  \frac12\norm{\nabla P_n\varphi_0}_{\bH}^2\leq
  \frac12\norm{\nabla \varphi_0}_{\bH}^2.
\]
Furthermore, as far as the stochastic integral is concerned, we have 
by Assumption \ref{hyp:noise} that 
\[
\int_0^t\left(\mu^\eps_{\lambda,n}(s), 
\nabla \varphi^\eps_{\lambda,n}(s))
    \cdot\b\Sigma^\eps_n(s)\,\d W(s)\right)_{H}
    =-\int_0^t\left(\nabla\mu^\eps_{\lambda,n}(s), \varphi^\eps_{\lambda,n}(s)
    \cdot\b\Sigma^\eps_n(s)\,\d W(s)\right)_{\bH},
\]
so that 
the Burkholder--Davis--Gundy, H\"older, and Young
inequalities
imply that, for every $\ell\geq2$ and $\delta>0$,
\begin{align*}
    &\E\left| \supp
    \int_0^\tau\left(\mu^\eps_{\lambda,n}(s), \nabla \varphi^\eps_{\lambda,n}(s)
    \cdot\b\Sigma^\eps_n(s)\,\d W(s)\right)_{H}
    \right|^{\frac\ell2}\\
    &\hspace{2cm}\leq C_\ell
    \E\left(\int_0^t
    \norm{\nabla\mu^\eps_{\lambda,n}(s)}_{\bH}^2
    \norm{\varphi^\eps_{\lambda,n}(s)\b\Sigma^\eps_n(s)}_{\cL^2(U,\bH)}^2
    \,\d s\right)^{\frac\ell4}\\
    &\hspace{2cm}\leq C_{\ell}
    \E\left(\int_0^t
    \norm{\nabla\mu_{\lambda,n}(s)}_{\bH}^2
    \norm{\varphi^\eps_{\lambda,n}(s)}_H^2
    \sum_{k=0}^n
    \norm{\b\sigma^\eps_k(s)}_{\b{L}^\infty(\OO)}^2
    \,\d s\right)^{\frac\ell4}\\
    &\hspace{2cm}\leq C_\ell\E\left[
    \norm{\nabla\mu_{\lambda,n}}_{L^2(0,t;\bH)}^{\frac\ell2}
    \norm{\varphi^\eps_{\lambda,n}}_{L^\infty(0,t;H)}^{\frac\ell2}
    \left(\sup_{s\in[0,T]}\sum_{k\in\enne}
\norm{\b\sigma^\eps_k(s)}^{2}_{\b L^\infty(\OO)}\right)^{\frac\ell4}
\right]\\
    &\hspace{2cm}\leq \delta\E\norm{\nabla\mu_{\lambda,n}}^\ell_{L^2(0,t;\bH)}
    +\frac{C_{\eps,\ell}}\delta
    \E\norm{\varphi^\eps_{\lambda,n}}^\ell_{L^\infty(0,t;H)},
\end{align*}
where $C_\ell,\, C_{\eps,\ell}>0$ are constants independent of both $\lambda$ and $n$.
Eventually, recalling the properties of the noise coefficient
collected in Remark~\ref{rmk:prop_noise}, one has that 
\begin{multline*}
    \frac12\int_0^t\sum_{k=0}^n
    \int_\OO\left[
    |\nabla[\nabla \varphi^\eps_{\lambda,n}(s)\cdot\b\sigma^\eps_k(s)]|^2
    +F_\lambda''(\varphi^\eps_{\lambda,n}(s))
    |\nabla \varphi^\eps_{\lambda,n}(s)\cdot\b\sigma^\eps_k(s)|^2
    \right]\,\d x\,\d s\\
    \leq C\int_0^t\left(\sum_{k\in\enne}
    \norm{\b\sigma^\eps_k(s)}_{\W}^2\right)
    \left[\norm{\varphi^\eps_{\lambda,n}(s)}_{V_2}^2
    +\norm{|F''_\lambda(\varphi^\eps_{\lambda,n}(s))|^{\frac12}
    \nabla\varphi^\eps_{\lambda,n}(s)}_{\bH}^2
    \right]\,\d s.
\end{multline*}
Taking $\frac \ell2$-powers, essential suprema in time and $\P$-expectations in \eqref{ito:en_app} and collecting the above estimates, then choosing $\delta$ sufficiently small
and suitably rearranging the terms, we infer that, for every $\ell\geq2$ and for every $t\in[0,T]$,
\begin{multline}
    \label{ito:en_app2}
    \E\sup_{s\in[0,t]}
    \norm{\nabla\varphi^\eps_{\lambda,n}(s)}_\bH^\ell
    +\E\sup_{s\in[0,t]}
    \norm{F_\lambda(\varphi^\eps_{\lambda,n}(s))}_{L^1(\OO)}^{\frac\ell2}
    +\E\norm{\nabla\mu^\eps_{\lambda,n}}_{L^2(0,t; \bH)}^{\ell}\\
    \leq C_{\ell}\left(\norm{\nabla\varphi_0}_\bH^\ell
    +\norm{F_\lambda(P_n\varphi_0)}_{L^1(\OO)}^\ell\right)
    +C_{\eps,\ell}
    \E\norm{\varphi^\eps_{\lambda,n}}^\ell_{L^\infty(0,t;H)}
    +C_{\eps,\ell}\int_0^t
    \E\norm{\nabla\varphi^\eps_{\lambda,n}(s)}_\bH^\ell\,\d s\\
    +
    C_{\eps,\ell}
    \E\norm{\varphi^\eps_{\lambda,n}}^\ell_{L^2(0,t;V_2)}
    +C_{\eps,\ell}
    \E\norm{|F''_\lambda(\varphi^\eps_{\lambda,n})|^{\frac12}
    \nabla\varphi^\eps_{\lambda,n}}^\ell_{L^2(0,t;\bH)}.
\end{multline}
Note that the implicit constants appearing on the right-hand side
have been updated, as usual, and are independent of both $n$ and 
$\lambda$. In order to refine the previous estimate, we test \eqref{eq2_app} by 
$-\Delta\varphi^\eps_{\lambda,n}$ and integrate the result in time, deducing that, for every $K,\,\delta>0$,
\begin{align*}
&K\int_0^t
\left[
\norm{\Delta\varphi^\eps_{\lambda,n}(s)}_H^2
+\int_\OO \Psi_\lambda''(\varphi^\eps_{\lambda,n}(s))
|\nabla\varphi^\eps_{\lambda,n}(s)|^2\,\d x
\right]\,\d s\\
&\hspace{2cm}=K\int_0^t\int_\OO\nabla\mu^\eps_{\lambda,n}(s)\cdot
\nabla\varphi^\eps_{\lambda,n}(s)\,\d x\,\d s
-K\int_0^t\int_\OO R''(\varphi^\eps_{\lambda,n}(s))|
\nabla\varphi^\eps_{\lambda,n}(s)|^2\,\d x\,\d s\\
&\hspace{2cm}\leq\delta\norm{\nabla\mu^\eps_{\lambda,n}}^2_{L^2(0,t; \bH)}
+\left(\frac{K^2}{4\delta}+C_R K\right)
\norm{\nabla\varphi^\eps_{\lambda,n}}^2_{L^2(0,t;\bH)}.
\end{align*}
Hence, we can choose $K$ large enough (e.g., $K=2C_{\eps,\ell}$)
and $\delta$ small enough in order 
to plug the last estimate 
into \eqref{ito:en_app2}. Moreover, the pathwise mass-conservation properties of the system entails that 
\[|\overline{\varphi^\eps_{\lambda,n}}|=
|\overline{P_n\varphi_0}|\leq\norm{\varphi_0}_{L^1(\OO)}\]
for all times, $\P$-almost surely, and by using again that $|R''|\leq C_R$,
we deduce that 
\begin{multline}
    \label{ito:en_app3} \E\sup_{s\in[0,t]}
    \norm{\varphi^\eps_{\lambda,n}(s)}_{V_1}^\ell
    +\E\sup_{s\in[0,t]}
    \norm{F_\lambda(\varphi^\eps_{\lambda,n}(s))}_{L^1(\OO)}^{\frac\ell2}\\
    +\E\norm{\nabla\mu^\eps_{\lambda,n}}_{L^2(0,t; \bH)}^{\ell}
    +\E\norm{\varphi^\eps_{\lambda,n}}_{L^2(0,t; V_2)}^{\ell}
    +\E\norm{|\Psi''_\lambda(\varphi^\eps_{\lambda,n})|^{\frac12}
    \nabla\varphi^\eps_{\lambda,n}}^\ell_{L^2(0,t;\bH)}\\
    \leq C_{\eps,\ell}\left(\norm{\nabla\varphi_0}_\bH^\ell
    +\norm{F_\lambda(P_n\varphi_0)}_{L^1(\OO)}^\ell\right)
    +C_{\eps,\ell}
    \E\norm{\varphi^\eps_{\lambda,n}}^\ell_{L^\infty(0,t;H)}
    +C_{\eps,\ell}\int_0^t
    \E\norm{\nabla\varphi^\eps_{\lambda,n}(s)}_\bH^\ell\,\d s.
\end{multline}

\subsubsection{The mobility estimate} 
In order to close the energy estimate \eqref{ito:en_app3}, a lower-order estimate is necessary. Eventually, noting that under Assumption \ref{hyp:mobility_nd}
one has that $M_{\andrea{n}}\in C^2(\erre)$, by the It\^o formula we get 
\begin{multline}\label{ito:M_n}
\norm{M_{n}(\varphi^\eps_{\lambda,n}(t))}_{L^1(\OO)}
  +\int_0^t\int_\OO
  \nabla\mu^\eps_{\lambda,n}(s)\cdot
  \nabla\varphi^\eps_{\lambda,n}(s)\,\d x\,\d s\\
  =\norm{M_{n}(P_n\varphi_0)}_{L^1(\OO)}
  +\int_0^t\left(M'_{n}(\varphi^\eps_{\lambda,n}(s)),
  \nabla\varphi^\eps_{\lambda,n}(s)\cdot\b\Sigma^\eps_k(s) \:
  \d W(s)\right)_H\\
  +\frac{1-\kappa}2\int_0^t\sum_{k=0}^n\int_\OO
  M''_{n}(\varphi^\eps_{\lambda,n}(s))
  |\nabla\varphi^\eps_{\lambda,n}(s)
  \cdot\b\sigma^\eps_k(s)|^2\,\d x\,\d s.
\end{multline}
Now, on the left-hand side, thanks to Assumption \ref{hyp:mobility_nd}
it holds that 
\[
\norm{M_{n}(\varphi^\eps_{\lambda,n}(t))}_{L^1(\OO)}
\geq\frac1{2C_m}\norm{\varphi^\eps_{\lambda,n}(t)}_H^2,
\]
while the identity \eqref{eq2_app} yields 
\[
  \int_0^t\int_\OO
  \nabla\mu^\eps_{\lambda,n}(s)\cdot
  \nabla\varphi^\eps_{\lambda,n}(s)\,\d x\,\d s
  \geq \int_0^t\int_\OO\left[
  |\Delta\varphi^\eps_{\lambda,n}(s)|^2
  +\Psi_\lambda''(\varphi^\eps_{\lambda,n}(s))
  |\nabla\varphi^\eps_{\lambda,n}(s)|^2
  -C_R|\varphi^\eps_{\lambda,n}(s)|^2
  \right]\,\d x\,\d s.
\]
On the right-hand side, Assumption \ref{hyp:mobility_nd} yields
\[
\norm{M_{n}(P_n\varphi_0)}_{L^1(\OO)}
\leq \frac1{2c_m}\norm{P_n\varphi_0}_H^2
\leq \frac1{2c_m}\norm{\varphi_0}_H^2.
\]
Furthermore,
note that the stochastic integral vanishes, since for every $k\in\enne$ and $s\in[0,t]$ one has that 
\[
  \left(M'_{n}(\varphi^\eps_{\lambda,n}(s)),
  \nabla\varphi^\eps_{\lambda,n}(s)\cdot\b\sigma^\eps_k(s)
  \right)_H
  =\left(\nabla M_{n}(\varphi^\eps_{\lambda,n}(s)), \b\sigma^\eps_k(s)\right)_{\bH}=0.
\]
Eventually, again by Assumption \ref{hyp:mobility_nd} we have
\begin{multline*}
    \frac{1-\kappa}2\int_0^t\sum_{k=0}^n\int_\OO
  M''_{n}(\varphi^\eps_{\lambda,n}(s))
  |\nabla\varphi^\eps_{\lambda,n}(s)
  \cdot\b\sigma^\eps_k(s)|^2\,\d x\,\d s\\
  \leq \frac{(1-\kappa)C_\infty^2}{2c_m}
  \int_0^t\left(\sum_{k\in\enne}
  \norm{\b\sigma^\eps_k(s)}_{\W}^2
  \right)\norm{\nabla\varphi^\eps_{\lambda,n}(s)}_\bH^2\,\d s.
\end{multline*}
Taking this information into account in \eqref{ito:M_n}, after raising it to the power $\frac \ell 2$, taking essential suprema in time and $\P$-expectations, we infer that, for every $\ell\geq2$,
\begin{multline}
    \label{ito:M_n2}
    \E\sup_{s\in[0,t]}\norm{\varphi^\eps_{\lambda,n}(s)}_H^\ell
    +\E\norm{\varphi^\eps_{\lambda,n}}_{L^2(0,t; V_2)}^\ell
    +\E\norm{|\Psi_\lambda''(\varphi^\eps_{\lambda,n})|^{\frac12}
    \nabla\varphi^\eps_{\lambda,n}}_{L^2(0,t;\bH)}^\ell\\
    \leq C\left(1+\norm{\varphi_0}_H^2\right)
    +C_{\eps,\ell}\int_0^t\E\norm{\varphi^\eps_{\lambda,n}(s)}_{V_1}^\ell \: \d s.
\end{multline}
\subsubsection{Closing the energy estimate}
Multiplying \eqref{ito:M_n2} by a sufficiently large
constant and summing it to \eqref{ito:en_app3} to 
absorb all the terms, we obtain that 
\begin{multline}
    \label{ito:en_app4}   \E\sup_{s\in[0,t]}
    \norm{\varphi^\eps_{\lambda,n}(s)}_{V_1}^\ell
    +\E\sup_{s\in[0,t]}
\norm{F_\lambda(\varphi^\eps_{\lambda,n}(s))}_{L^1(\OO)}^{\frac\ell2}\\
+\E\norm{\nabla\mu^\eps_{\lambda,n}}_{L^2(0,t; \bH)}^{\ell}
+\E\norm{\varphi^\eps_{\lambda,n}}_{L^2(0,t; V_2)}^{\ell}
+\E\norm{|\Psi''_\lambda(\varphi^\eps_{\lambda,n})|^{\frac12}
\nabla\varphi^\eps_{\lambda,n}}^\ell_{L^2(0,t;\bH)}\\
    \leq C_{\eps,\ell}\left(\norm{\nabla\varphi_0}_\bH^\ell
+\norm{F_\lambda(P_n\varphi_0)}_{L^1(\OO)}^\ell\right)
    +C_{\eps,\ell}\int_0^t
    \E\norm{\varphi^\eps_{\lambda,n}(s)}_{V_1}^\ell\,\d s,
\end{multline}
where the implicit constants on the right-hand side
are independent of both $\lambda$ and $n$. Thanks to the quadratic growth of $F_\lambda$ and again the contraction of $P_n$ it holds that 
\[
\norm{F_\lambda(P_n\varphi_0)}_{L^1(\OO)}
\leq C_\lambda\left(1+\norm{P_n\varphi_0}_H^2\right)
\leq C_\lambda\left(1+\norm{\varphi_0}_H^2\right),
\]
where $C_\lambda>0$ is a positive constant independent of $n$.
Hence, we can apply 
the Gronwall lemma and
deduce that for every $\ell\geq2$, there exists a constant 
$C_{\eps,\lambda,\ell}$ independent of $n$, such that 
\begin{align}
    \label{est1_n}
    \norm{\varphi^\eps_{\lambda,n}}_{L^\ell_\cP(\Omega;
    C^0([0,T]; V_1)\cap L^2(0,T; V_2))}
    &\leq C_{\eps,\lambda,\ell},\\
    \label{est2_n}
    \norm{\nabla\mu^\eps_{\lambda,n}}_{L^\ell_\cP(\Omega; L^2(0,T;\bH))}&\leq C_{\eps,\lambda,\ell},\\
    \label{est3_n}
    \norm{|\Psi_\lambda''(\varphi^\eps_{\lambda,n})|^{\frac12}
    \nabla\varphi^\eps_{\lambda,n}}_{L^\ell_\cP(\Omega;L^2(0,T;\bH))}
    &\leq C_{\eps,\lambda,\ell}.
\end{align}

\subsubsection{Further estimates}
\label{ssec:further_n}
As a straightforward consequence of the previous estimates, since $F_\lambda'$ is Lipschitz-continuous, by comparison in 
\eqref{eq2_app} we also get 
\begin{align}
    \label{est4_n}
    \norm{\mu^\eps_{\lambda,n}}_{L^\ell_\cP(\Omega; L^2(0,T;V_1))}&\leq C_{\eps,\lambda,\ell},\\
    \label{est5_n}
    \norm{F_\lambda'(\varphi^\eps_{\lambda,n})}_{
    L^\ell_\cP(\Omega; L^2(0,T;H))}
    &\leq C_{\eps,\lambda,\ell}.
\end{align}
For the sake of convenience, let
\[
G^{\eps}_n: \Omega \times[0,T] \times V_1 \to \cL^2(U,H)
\]
be defined by 
\[
G^{\eps}_n(\omega, t,\psi)[u_k] = \nabla \psi\cdot \b \Sigma^\eps_n(\omega, t)[u_k] = \nabla \psi \cdot \b P_n \b \sigma^\eps_k(\omega, t)
\]
for all $k \in \mathbb N$.
Then, we have
\begin{equation*}
    \begin{split}
        \|G^{\eps}_n(\varphi^\eps_{\lambda,n}(t))\|^2_{\cL^2(U,H)} & \leq \sum_{k\in\mathbb N} \|\nabla \varphi^\eps_{\lambda,n}(t) \cdot \b P_n\b \sigma^\eps_{k}(t)\|^2_H \\
        & \leq C\sum_{k\in\mathbb N} \|\varphi^\eps_{\lambda,n}(t)\|^2_{V_1}\|\b \sigma^\eps_{k}(t)\|^2_{\W},
    \end{split}
\end{equation*}
and, in turn,
\[
\begin{split}
    \E \sup_{t \in [0,T]}  \|G^{\eps}_n(\varphi^\eps_{\lambda,n}(t))\|^\ell_{\cL^2(U,H)}&  \leq C\E\left[ \sup_{t \in [0,T]} \|\varphi^\eps_{\lambda,n}(t)\|^\ell_{V_1} \left( \sum_{k \in \mathbb N} \|\b \sigma^\eps_{k}(t)\|^2_{\W}\right)^\frac \ell 2 \right] \\
    & \leq C_\eps \E\left[ \sup_{t \in [0,T]} \|\varphi^\eps_{\lambda,n}(t)\|^\ell_{V_1} \right].
\end{split}
\]
Before moving on, observe that by testing the equation \eqref{eq2_app} by $-\Delta \varphi_{\lambda,n}^\eps$ we obtain
\[
\|\Delta \varphi_{\lambda,n}^\eps(t)\|^2_H \leq C\left( \|\varphi_{\lambda,n}^\eps(t)\|^2_{V_1} + \|\nabla \mu_{\lambda,n}^\eps(t)\|_{\b H}\|\varphi_{\lambda,n}^\eps(t)\|_{V_1}\right)
\]
where $C >0$ is independent of $n, \, \lambda$ and $\eps$. By squaring, integrating in time, and taking $\frac \ell 4$-powers and $\P$-expectations we have
\[
\begin{split}
    & \E \left| \int_0^t\|\Delta \varphi_{\lambda,n}^\eps(s)\|^4_H \: \d s \right|^\frac \ell 4 \\
    & \hspace{2cm}\leq C\left[ \E\left|\int_0^t\|\varphi_{\lambda,n}^\eps(s)\|^4_{V_1} \: \d s\right|^\frac \ell 4+ \E\left|\int_0^t\|\nabla \mu_{\lambda,n}^\eps(t)\|^2_{\b H}\|\varphi_{\lambda,n}^\eps(t)\|^2_{V_1}\: \d s\right|^\frac \ell 4\right] \\
    & \hspace{2cm}\leq C\left[ \E\left|\int_0^t\|\varphi_{\lambda,n}^\eps(s)\|^4_{V_1} \: \d s\right|^\frac \ell 4+ \E\sup_{t \in [0,T]} \|\varphi_{\lambda,n}^\eps(t)\|^\frac \ell 2_{V_1}\left|\int_0^t\|\nabla \mu_{\lambda,n}^\eps(t)\|^2_{\b H}\: \d s\right|^\frac \ell 4\right] \\
    & \hspace{2cm}\leq C\left[ \|\varphi_{\lambda,n}^\eps\|^\ell_{L^\ell_\cP(\Omega;L^4(0,T;V_1))} + \|\varphi_{\lambda,n}^\eps(t)\|^\ell_{L^\ell_\cP(\Omega;C^0([0,T];V_1))} + \|\nabla \mu_{\lambda,n}^\eps\|^\ell_{L^\ell_\cP(\Omega;L^2(0,T;\b H))}\right],
\end{split}
\]
implying in turn
\[
\|\varphi_{\lambda,n}^\eps\|_{L^\ell_\cP(\Omega;L^4(0,T;V_2))} \leq C_{\eps, \lambda,\ell},
\]
slightly improving the high-order time regularity of the previous estimate.
Next, we have
\begin{equation*}
    \begin{split}
        \|G^{\eps}_n(\varphi^\eps_{\lambda,n}(t))\|^2_{\cL^2(U,V_1)} & = \sum_{k\in\mathbb N} \|\nabla \varphi^\eps_{\lambda,n}(t) \cdot \b P_n\b \sigma^\eps_{k}(t)\|^2_{V_1} \\
        & \leq C\sum_{k\in\mathbb N} \|\varphi^\eps_{\lambda,n}(t)\|^2_{V_2}\|\b \sigma^\eps_{k}(t)\|^2_{\W},
    \end{split}
\end{equation*}
and, in turn,
\[
\E\left| \int_0^t \|G^{\eps}_n(\varphi^\eps_{\lambda,n}(s))\|^4_{\cL^2(U,V_1)} \: \d s\right|^\frac \ell 4 \leq C_\eps \E \left| \int_0^t \|\varphi^\eps_{\lambda,n}(s)\|^4_{V_2} \: \d s\right|^\frac \ell 4.
\]
The above estimates show that
\[
\|G^{\eps}_n(\varphi^\eps_{\lambda,n})\|_{L^\ell_\cP(\Omega;L^\infty(0,T;\cL^2(U,H)) \cap L^4(0,T;\cL^2(U,V_1)))} \leq C_{\eps,\lambda,\ell},
\]
and applying \cite[Lemma 2.1]{fland-gat}, we immediately get the bounds on the It\^{o} stochastic integrals
\[
\left\|\int_0^\cdot G^\eps_{\lambda, n}(\varphi_{\lambda,n}^\eps(s)) \: \d W_s\right\|_{L^\ell_\cP(\Omega; W^{\alpha, \ell}(0,T;H)) \cap W^{\beta, 4}(0,T;V_1))} \leq C_{\eps, \lambda, \ell}
\]
for all $\ell \geq 4$, all $\alpha \in (0,\frac 12)$ and all $\beta \in (0,\frac 12)$. Finally, we are left to handle the Stratonovich corrector, whose control is straightforward given the high regularity of the approximated transport field. Indeed, a simple application of the H\"{o}lder inequality yields
\[
\|\Ln(\varphi_{\lambda,n}^\eps)\|_{L^\ell_\cP(\Omega;C^0([0,T]; V_1^*))} \leq C_{\eps,\lambda,\ell}.
\]
By comparison, we deduce at last that
\[
\|\varphi_{\lambda,n}^\eps\|_{L^\ell(\Omega;W^{\alpha, \ell}(0,T;V_1^*))} \leq C_{\eps, \lambda, \ell}
\]
for all $\alpha \in (0,\frac 12)$ and $\ell \geq 2$.

\subsection{Passage to the limit as $\boldsymbol{n\to+\infty}$}
\label{ssec:limit_as_n}
The previously shown uniform estimates enable a stochastic compactness argument via the Prokhorov and Skorokhod theorems. 

\subsubsection{Tightness of the laws of Galerkin solutions}
In order to show the needed tightness properties, the procedure combines the previous uniform estimates with suitable compact embeddings of Bochner spaces through the Markov inequality. Collectively, we get the following result.
\begin{lem} \label{lem:tight}
    Let $W_n \equiv W$ be a constant sequence of Wiener processes. Then, the family of laws of 
    \[
    \{\varphi_{\lambda,n}^\eps,\, G^\eps_{\lambda,n}(\varphi_{\lambda,n}^\eps) \cdot W_n,\, W_n,\,\b \Sigma^\eps_{n}\}_{n \in\mathbb N}
    \]
    is tight in the space
    \[
    \Xi \times C^0([0,T]; H) \times C^0([0,T]; U_0) \times L^2(0,T;\cL^2(U,\b H)))
    \]
    where $\Xi = C^0([0,T];H) \cap L^2(0,T;V_1)$.
\end{lem}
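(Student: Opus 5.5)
Tightness of a family of laws on a product space follows from tightness of each marginal, so we treat the four components separately. In each case we build compact sets from the uniform bounds of Subsection~\ref{ssec:further_n} and a compact embedding, and then control the mass outside them with the Markov inequality.

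\emph{The phase variable.} By \eqref{est1_n} and the fractional-in-time bound $\|\varphi^\eps_{\lambda,n}\|_{L^\ell(\Omega;W^{\alpha,\ell}(0,T;V_1^*))}\le C_{\eps,\lambda,\ell}$, the family is bounded in
\[
\mathcal Z:=L^\ell\bigl(\Omega; L^\infty(0,T;V_1)\cap L^2(0,T;V_2)\cap W^{\alpha,\ell}(0,T;V_1^*)\bigr),
\]
uniformly in $n$. We choose $\ell$ large and $\alpha\in(0,\frac12)$ with $\alpha\ell>1$. Since $V_1\embed H$ and $V_2\embed V_1$ are compact, the Aubin--Lions--Simon lemma gives two compact embeddings:
\begin{itemize}
\item $L^\infty(0,T;V_1)\cap W^{\alpha,\ell}(0,T;V_1^*)\embed C^0([0,T];H)$;
\item $L^2(0,T;V_2)\cap W^{\alpha,\ell}(0,T;V_1^*)\embed L^2(0,T;V_1)$.
\end{itemize}
Hence the closed balls $B_R$ of the intersection space are compact in $\Xi$. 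Markov's inequality then gives $\P(\varphi^\eps_{\lambda,n}\notin B_R)\le C/R^\ell$, uniformly in $n$.

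\emph{The stochastic integrals.} We use the bound on $\int_0^\cdot G^\eps_n(\varphi^\eps_{\lambda,n})\,\d W$ in $L^\ell(\Omega;W^{\alpha,\ell}(0,T;H)\cap W^{\beta,4}(0,T;V_1))$ obtained from \cite[Lemma~2.1]{fland-gat}. For $\alpha\ell>1$, Simon's compactness result yields a compact embedding $W^{\beta,4}(0,T;V_1)\cap W^{\alpha,\ell}(0,T;H)\embed C^0([0,T];H)$. This follows from the compactness of $V_1\embed H$ together with the equicontinuity in $H$ provided by the Sobolev--Slobodeckij embedding into Hölder spaces. Markov's inequality again concludes. \emph{The Wiener processes.} $W_n\equiv W$ is a constant sequence, and a single Radon law on the Polish space $C^0([0,T];U_0)$ is tight.

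\emph{The noise fields.} We expect this component to be the main obstacle, because there is no time regularity for $\b\Sigma^\eps_n$, so the Aubin--Lions argument is not available. Instead we show that $\b\Sigma^\eps_n=\b P_n\circ\b\Sigma^\eps\to\b\Sigma^\eps$ in $L^2(\Omega;L^2(0,T;\cL^2(U,\bH)))$, and then use that a sequence converging in probability in a Polish space has tight laws. Indeed, any sequence converging in law is tight by Prokhorov's converse theorem. For the convergence we write
\[
\|(\b P_n-I)\b\Sigma^\eps\|^2_{\cL^2(U,\bH)}=\sum_k\|(\b P_n-I)\b\sigma^\eps_k\|^2_\bH .
\]
Each summand tends to $0$ and is dominated by $\|\b\sigma^\eps_k\|_\bH^2$, which is summable uniformly thanks to property~(iii) and \ref{hyp:noise}. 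Dominated convergence, applied first in $k$ and then in $(\omega,t)$, gives the convergence. Combining the four marginal tightness statements proves the lemma.
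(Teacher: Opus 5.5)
Your proposal is correct. For the phase variable and the Wiener processes it coincides with the paper's argument: Simon's compact embeddings $L^\infty(0,T;V_1)\cap W^{\alpha,\ell}(0,T;V_1^*)\embed C^0([0,T];H)$ and $L^2(0,T;V_2)\cap W^{\alpha,\ell}(0,T;V_1^*)\embed L^2(0,T;V_1)$ with $\alpha\ell>1$, combined with Markov's inequality.

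For the stochastic integrals there is a small variant. The paper uses only the compact embedding $W^{\beta,4}(0,T;V_1)\embed C^0([0,T];H)$ for $\beta>\frac14$. You instead take spatial compactness from the $L^4(0,T;V_1)$ part and equicontinuity from $W^{\alpha,\ell}(0,T;H)$ via Simon's criterion. Both work.

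The genuine difference is in the noise fields. The paper follows the same "uniform bound, compact embedding, Markov'' pattern as for the other components. It invokes $\|\b\Sigma^\eps_n\|_{\cL^2(U,\b V_1)}\le\|\b\Sigma^\eps\|_{\cL^2(U,\b V_1)}$ and the compactness of $\b V_1\embed\bH$. You instead prove $\b P_n\b\Sigma^\eps\to\b\Sigma^\eps$ in $L^2(\Omega;L^2(0,T;\cL^2(U,\bH)))$ by dominated convergence. You then use that a sequence converging in law on a Polish space has tight laws.

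Your route is the more robust one here, for two reasons:
\begin{enumerate}
\item As you observe, there is no time regularity for $\b\Sigma^\eps_n$. A bound in $L^2(0,T;\cL^2(U,\b V_1))$ alone does not produce relatively compact sets in $L^2(0,T;\cL^2(U,\bH))$; think of $\sin(mt)\,\b\Sigma_0$.
\item Even pointwise in time, $\cL^2(U,\b V_1)\embed\cL^2(U,\bH)$ is not compact when $U$ is infinite-dimensional.
\end{enumerate}

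What actually makes this component tight is the special structure you exploit: the family consists of projections of a single fixed process. Equivalently, take a compact $K_0$ carrying most of the law of $\b\Sigma^\eps$. Then $K_0\cup\{\b P_n x:\,x\in K_0,\ n\in\enne\}$ is compact, because $\|\b P_n\|\le 1$ and $\b P_n\to I$ strongly, hence uniformly on compacts. The paper's sketch buys brevity and a uniform presentation across components; your argument genuinely closes this step.
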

\begin{proof}
    The tightness property of the sequence of laws of Wiener processes is trivial as such family of laws consists of a single probability measure on $C^0([0,T];U)$, which is a separable Banach space. For the remaining cases, let us recall that the embeddings (see also \cite[Corollary 5, p. 86]{simon})
    \begin{align*}
        L^\infty(0,T;V_1) \cap W^{\alpha, \ell}(0,T;V_1^*) & \embed C^0([0,T]; H) \\
        L^2(0,T;H^2) \cap W^{\alpha, \ell}(0,T;V_1^*) & \embed L^2(0,T;V_1)
    \end{align*}
    are compact whenever $\alpha \ell > 1$, i.e., given that $\alpha$ is an arbitrary value in $(0, \frac 12)$, whenever $\ell > 2$. By the same token, we have
    \begin{align*}
        W^{\beta, 4}(0,T;V_1) \embed C^0([0,T], H)
    \end{align*}
    compactly for any $\beta > \frac 14$. As for the transport fields, observe that by the contractive properties of $P_n$
    \[
    \|\b \Sigma^\eps_n\|_{\cL^2(U, \b V_1)}  \leq \|\b \Sigma^\eps\|_{\cL^2(U, \b V_1)}
    \]
    for all $n \in \mathbb N$, and the embedding $\b V_1 \embed \b H$ is compact. The claims all follow by the Markov inequality and the previously proven uniform estimates through a standard argument.
\end{proof} \noindent
Owing to the Prokhorov and Skorokhod theorems, there exists a probability space $(\tom,\, \tF,\, \tP)$, possibly depending on $\eps$ and $\lambda$, and a sequence of random variables $\{X_{\lambda,n}^\eps :(\tom, \tF) \to (\Omega, \cF)\}_{n \in \mathbb N}$ such that the law of $X_{\lambda,n}^\eps$ equals $\P$ for all $n \in \mathbb N$ and, up to subsequences that we do not relabel for readability purposes, all of the following convergences hold in the limit $n \to +\infty$:
\begin{align*}
    \widetilde{\varphi}^\eps_{\lambda, n} := \varphi_{\lambda,n}^\eps \circ X_{\lambda,n}^\eps \to \widetilde{\varphi}^\eps_\lambda & \qquad \text{in }C^0([0,T];H) \cap L^2(0,T;V_1),\, \tP\text{-a.s.;} \\
    \widetilde{I}^\eps_{\lambda, n} := (G^\eps_n(\varphi_{\lambda,n}^\eps) \cdot W_n) \circ X_{\lambda,n}^\eps  \to \widetilde{I}^\eps_\lambda & \qquad \text{in }C^0([0,T];H),\, \tP\text{-a.s.;} \\
    \widetilde{W}^\eps_{\lambda,n} := W_n \circ X_{\lambda,n}^\eps \to \widetilde W^\eps_\lambda & \qquad \text{in }C^0([0,T];U_0),\, \tP\text{-a.s.};\\
    \widetilde{\b \Sigma}^\eps_n := \b \Sigma^\eps_n \circ X_{\lambda,n}^\eps \to \widetilde{\b \Sigma}^\eps_\lambda & \qquad \text{in }L^2(0,T;\cL^2(U, \b H)),\, \tP\text{-a.s.};
\end{align*}
where the limit processes belong to the specified spaces. In turn, using the Vitali convergence theorem, reflexivity and the Banach--Alaoglu theorem, we have also
\begin{align*}
    \widetilde{\varphi}^\eps_{\lambda, n} := \varphi_{\lambda,n}^\eps \circ X_{\lambda,n}^\eps \to \widetilde{\varphi}^\eps_\lambda & \qquad \text{in }L^p(\tom; C^0([0,T];H) \cap L^2(0,T;V_1)),\,\quad \forall \, p < \ell; \\
    \widetilde{\varphi}^\eps_{\lambda, n} \overset{*}{\rightharpoonup} \widetilde{\varphi}^\eps_\lambda & \qquad \text{in }L^\ell_w(\tom; L^\infty([0,T];V_1)) \cap L^\ell(\tom; W^{\alpha, \ell}(0,T;V_1^*)); \\
    \widetilde{\varphi}^\eps_{\lambda, n} \rightharpoonup \widetilde{\varphi}^\eps_\lambda & \qquad \text{in }L^\ell(\tom; L^4([0,T];V_2)); \\
    \widetilde{\mu}^\eps_{\lambda, n} := {\mu}^\eps_{\lambda, n} \circ X_{\lambda,n}^\eps \rightharpoonup\widetilde{\mu}^\eps_\lambda & \qquad \text{in }L^\ell(\tom; L^2([0,T];V_1)); \\
    \widetilde{I}^\eps_{\lambda, n} := (G^\eps_n(\varphi_{\lambda,n}^\eps) \cdot W_n) \circ X_{\lambda,n}^\eps  \to \widetilde{I}^\eps_\lambda & \qquad \text{in }L^p(\tom; C^0([0,T];H)),\, \quad \forall \, p < \ell; \\
    \widetilde{W}^\eps_{\lambda,n} := W_n \circ X_{\lambda,n}^\eps \to \widetilde{W}^\eps_\lambda & \qquad \text{in }L^p(\tom;C^0([0,T];U_0)),\,\quad \forall \, p < \ell; \\
    \widetilde{\b \Sigma}^\eps_n := \b \Sigma^\eps_n \circ X_{\lambda,n}^\eps \to \widetilde{\b \Sigma}^\eps_\lambda & \qquad \text{in }L^p(\tom; L^2([0,T];\cL^2(U, \b H)),\,\qquad \forall \, p < \ell.
\end{align*}
Owing to all the convergence above, let us stress that the limiting processes satisfy the following regularity properties:
\begin{align*}
    \widetilde{\varphi}^\eps_\lambda & \in L^\ell_\cP(\tom; C^0([0,T];H)) \cap L^\ell_w(\tom; L^\infty(0,T;V_1)) \cap L^\ell_\cP(\tom; L^4(0,T;V_2)) \cap L^\ell_\cP(\tom; W^{\alpha, \ell}(0,T;V_1^*));\\
    \widetilde{\mu}^\eps_\lambda & \in L^\ell_\cP(\tom; L^2(0,T;V_1)); \\
    \widetilde{I}^\eps_\lambda & \in L^\ell_\cP(\tom; C^0([0,T]; H)); \\
    \widetilde{W}^\eps_\lambda & \in L^\ell_\cP(\tom; C^0([0,T];U)).
\end{align*}
The properties of the limit transport field will readily follow from the following identification argument.
\subsubsection{Identification of the limit solution} \label{ssec:limit_n}
It is then left to show that the limiting processes solve a regularized version of problem \eqref{eq:ch}. First, we identify the limits of several terms of the sequence leveraging the strong convergence of the approximated order parameters. Let $\ell \geq 2$ be arbitrary but fixed. Given the Lipschitz continuity of $F'_\lambda$, it is straightforward to show that
\[
F'_\lambda(\widetilde{\varphi}^\eps_{\lambda,n}) \to F'_\lambda(\widetilde{\varphi}^\eps_\lambda) \qquad \text{in }L^p(\tom;L^2(0,T;H))
\]
for all $p < \ell$, and similarly, defining
\[
\widetilde{G}^\eps_\lambda: \Omega \times(0,T) \times V_1 \to \cL^2(U, H)
\]
given by
\[
\widetilde{G}^\eps_\lambda(\omega, t, \psi)[u_k] = \nabla \psi \cdot \widetilde{\b \Sigma}^\eps_\lambda u_k = \nabla \psi \cdot \widetilde{\b \sigma}_{\lambda, k}^\eps(\omega, t)
\]
for all $k \in \mathbb N$, 
we have
\[
\begin{split}
    & \|\widetilde{G}^\eps_{n}(\widetilde {\varphi}^\eps_{\lambda, n}(t))-\widetilde{G}^\eps_\lambda(\widetilde{\varphi}^\eps_\lambda(t))\|^2_{\cL^2(U,H)} \\
    & \hspace{1.3cm} \leq C\sum_{k \in \mathbb N} \|\nabla \widetilde {\varphi}^\eps_{\lambda, n}(t)\cdot \b P_n \widetilde{\b \sigma}_{\lambda, k}^\eps(t) - \nabla \widetilde {\varphi}^\eps_{\lambda}(t)\cdot \widetilde{\b \sigma}_{\lambda, k}^\eps(t)\|^2_H \\
    & \hspace{1.3cm} \leq C\sum_{k \in \mathbb N} \|\nabla \widetilde {\varphi}^\eps_{\lambda, n}(t)\|_{\b L^4(\OO)}^2\|\b P_n\widetilde{\b \sigma}_{\lambda, k}^\eps(t)-\widetilde{\b \sigma}_{\lambda, k}^\eps(t)\|^2_{\b L^4(\OO)}+\|\nabla \widetilde {\varphi}^\eps_{\lambda, n}(t) - \nabla \widetilde {\varphi}^\eps_{\lambda}(t) \|_{\b H}^2\|\widetilde{\b \sigma}_{\lambda, k}^\eps(t)\|^2_{\b L^\infty(\OO)}
\end{split}
\]
and therefore, integrating in time and taking $\tP$-expectations, we finally get
\[
\widetilde{G}^\eps_{n}(\widetilde {\varphi}^\eps_{\lambda, n}) \to \widetilde{G}^\eps_\lambda(\widetilde {\varphi}^\eps_{\lambda}) \qquad \text{in }L^p(\tom;L^2(0,T;\cL^2(U,H)))
\]
for all $p < \ell$. Finally, we deal with the Stratonovich corrector. Defining the limit corrector $\tLe : \tom \times [0,T] \times V_1 \to V_1^*$ so that
\[
\langle \tLe(\omega,t,\psi_1),\,\psi_2\rangle_{V_1}:=
\frac12\sum_{k=1}^n\int_\OO
[\widetilde{\b \sigma}_{\lambda, k}^\eps(\omega,t,x)\cdot\nabla \psi_1(x)]
[\widetilde{\b \sigma}_{\lambda, k}^\eps(\omega,t,x)\cdot\nabla\psi_2(x)]\:\d x,
\]
for all $\omega \in \Omega$, $t \in [0,T]$ and $\psi_1, \,\psi_2 \in V_1$, we observe that
\[
\begin{split}
    & \|\tLn\widetilde {\varphi}^\eps_{\lambda, n}(t) - \tLe\widetilde {\varphi}^\eps_{\lambda}\|_{V_1^*} \\
    & \hspace{0.7cm} = \sup_{\substack{v \in V_1 \\ \|v\|_{V_1} = 1}}\left[ \left|\sum_{k=1}^n [ \widetilde{\b \sigma}_{\lambda, k}^\eps(t)\cdot(\nabla\widetilde {\varphi}^\eps_{\lambda, n}(t) - \nabla\widetilde {\varphi}^\eps_{\lambda}(t))][\widetilde{\b \sigma}_{\lambda, k}^\eps(t)\cdot \nabla v]\right| + \left|\sum_{k > n}[\widetilde{\b \sigma}_{\lambda, k}^\eps(t)\cdot\nabla\widetilde {\varphi}^\eps_{\lambda}(t)][\widetilde{\b \sigma}_{\lambda, k}^\eps(t)\cdot \nabla v] \right|\right], 
\end{split}
\]
yielding immediately by interpolation
\[
\tLn\widetilde {\varphi}^\eps_{\lambda, n} \to \tLe\widetilde {\varphi}^\eps_{\lambda} \qquad \text{in }L^p(\tom;L^2(0,T;V_1^*))
\]
for all $p < \ell$. As far as the stochastic integral is concerned, it can be shown by standard arguments that (see also \cite{scarpa21, DGS})
\[
\widetilde{I}^\eps_\lambda = \int_0^\cdot \widetilde G^\eps_\lambda(\widetilde{\varphi}^\eps_\lambda(s)) \: \d \widetilde W^\eps_\lambda(s),
\]
namely, an $H$-valued $\tF^\eps_{\lambda, t}$-martingale, where the $(\tF^\eps_{\lambda, t})_{t \in [0,T]} \subset \tF$ denotes the natural filtration generated by the limit processes, i.e., 
\[
\tF^\eps_{\lambda, t} = \sigma\left\{ \widetilde{\varphi}^\eps_\lambda(s),\, \widetilde{I}^\eps_\lambda(s),\, \widetilde W^\eps_\lambda(s),\,s \in [0,t] \right\}.
\]
Eventually, it is possible to pass to the limit in the weak formulations of both \eqref{eq1_app} and \eqref{eq2_app}. Indeed, testing both equations for some $\psi \in V_1$ and letting $n\to+\infty$, thanks to preservation of laws through $X_{\lambda,n}^\eps$ and the dominated convergence theorem we conclude that
\begin{multline*}
			(\widetilde {\varphi}^\eps_{\lambda}(t),\psi)_H +
			\int_0^t\left[\int_{\OO}
			m(\widetilde {\varphi}^\eps_{\lambda}(t))\nabla  \widetilde \mu^\eps_\lambda(s)\cdot \nabla \psi\,\d x
            +\kappa\ip{\tLe(s,\widetilde {\varphi}^\eps_{\lambda}(s))}{\psi}_{V_1^*, V_1}
            \right]\,\d s\\
			= ( \widetilde {\varphi}^\eps_{\lambda}(0),\psi)_{H} +
			\left(\int_0^t \nabla\widetilde {\varphi}^\eps_{\lambda}(s)\cdot\widetilde{\b \Sigma}^\eps_\lambda(s)\,\d  \widetilde W^\eps_\lambda(s), \psi\right)_{H}
\end{multline*}
$\tP$-almost surely, and, in turn, the regularized chemical potential $\widetilde \mu^\eps_\lambda$ satisfies
\[
\int_\OO \widetilde \mu^\eps_\lambda(t)\psi \: \d x = \int_\OO \nabla \widetilde {\varphi}^\eps_{\lambda}(t) \cdot \nabla \psi \: \d x + \int_\OO F'_\lambda(\widetilde {\varphi}^\eps_{\lambda}(t)) \psi \: \d x
\]
for almost every $t \in [0,T]$ and $\tP$-almost surely. Therefore, the procedure results in the existence of a martingale solution
\[
\left(\widetilde{\Omega},\,
\widetilde{\cF}, \,
(\widetilde{\cF}^\eps_{\lambda,t})_{t\in[0,T]},\,
\widetilde{\P}, \,
\widetilde{W}^\eps_\lambda, \,
\widetilde{\b\Sigma}^\eps_\lambda,\,
\widetilde{\varphi}^\eps_\lambda
\right)
\]
of the regularized problem 
\begin{numcases}{}
  \label{eq1_app_lam}
  \d\widetilde{\varphi}^\eps_{\lambda} 
  - \div \left[m(\widetilde{\varphi}^\eps_{\lambda})
  \nabla\widetilde{\mu}^\eps_{\lambda}\right]\,\d t + 
  \kappa\tLe
  \widetilde{\varphi}^\eps_{\lambda}\,\d t
  =\nabla \widetilde{\varphi}^\eps_{\lambda}\cdot 
  \widetilde{\b\Sigma}^\eps_\lambda\,\d \widetilde W^\eps_\lambda
  \qquad&\text{in }$(0,T)\times\OO$,\\
  \label{eq2_app_lam}
  \widetilde{\mu}^\eps_{\lambda}=
  -\Delta\widetilde{\varphi}^\eps_{\lambda} +F_{\lambda}'(\widetilde{\varphi}^\eps_{\lambda})
  \qquad&\text{in }$(0,T)\times\OO$,\\
  \label{eq3_app_lam}
  \partial_{\b n}\widetilde{\varphi}^\eps_{\lambda} = 
  m(\widetilde{\varphi}^\eps_{\lambda})\partial_{\b n}
  \widetilde{\mu}^\eps_{\lambda} = 0
  \qquad&\text{in }$(0,T)\times\partial\OO$,\\
  \label{eq4_app_lam}
  \widetilde{\varphi}^\eps_{\lambda}(0)=\varphi_0
  \qquad&\text{in }$\OO$.
\end{numcases}
\begin{remark}
    In principle, the probability space may also depend on the parameters $\lambda$ and $\eps$. However, it is a standard matter to see that this dependence is uninfluential (and can even be get rid of) for the sake of the remainder of the proof. Therefore, with the aim of better readability, we omit to specify these dependencies explicitly.
\end{remark}

\subsection{Uniform estimates with respect to $\boldsymbol{\lambda}$}
\label{ssec:est_lam}
This second set of uniform estimates recasts the computations of Subsection \ref{ssec:est_n} in order to setup a compactness argument with respect to the Yosida parameter $\lambda \in (0,1)$. In the following, the symbol $\tE$ denotes expectations with respect to the probability $\tP$, while the value of $\eps \in (0,1)$ is still fixed.

\subsubsection{The energy estimate} First, we establish an energy inequality for the regularized problem by passing to the limit as $n\to+\infty$ in the corresponding estimate. Indeed, by using the equivalence of laws in \eqref{ito:en_app4} and
by letting $n\to+\infty$, 
the weak lower semicontinuity of norms as well as
the fact that 
\[
  \lim_{n\to+\infty}
  \norm{F_\lambda(P_n\varphi_0)- F_\lambda(\varphi_0)}_{L^1(\OO)}=0
  \qquad\forall\,\lambda\in(0,1)
\]
yield that
\begin{multline}
    \label{ito:en_app5}
    \tE\sup_{s\in[0,t]}
    \norm{\widetilde{\varphi}^\eps_{\lambda}(s)}_{V_1}^\ell
    +\tE\sup_{s\in[0,t]}
    \norm{F_\lambda(\widetilde{\varphi}^\eps_{\lambda}(s))}_{L^1(\OO)}^{\frac\ell2}\\
    +\tE
    \norm{\nabla\widetilde{\mu}^\eps_{\lambda}}_{L^2(0,t; \bH)}^{\ell}
    +\tE
    \norm{\widetilde{\varphi}^\eps_{\lambda}
    }_{L^2(0,t; V_2)}^{\ell}
    +\tE
    \norm{|\Psi''_\lambda(\widetilde{\varphi}^\eps_{\lambda})|^{\frac12}
    \nabla\widetilde{\varphi}^\eps_{\lambda}}^\ell_{L^2(0,t;\bH)}\\
    \leq C_{\eps,\ell}(\norm{\nabla\varphi_0}_\bH^\ell
    +\norm{F_\lambda(\varphi_0)}_{L^1(\OO)}^\ell)
    +C_{\eps,\ell}\int_0^t
    \tE
    \norm{\widetilde{\varphi}^\eps_{\lambda}(s)}_{V_1}^\ell\,\d s,
\end{multline}
where the implicit constants on the right-hand side
are independent of $\lambda$.
Since by the properties of $F_\lambda$ we have
\[
  \norm{F_\lambda(\varphi_0)}_{L^1(\OO)}
  \leq \norm{F(\varphi_0)}_{L^1(\OO)}\!,
\]
we infer by the Gronwall lemma that 
for every $\ell\geq2$, there exists a constant 
$C_{\eps,\ell}$, independent of $\lambda$, such that 
\begin{align}
    \label{est1_lam}
    \norm{\widetilde{\varphi}^\eps_{\lambda}}_{
    L^\ell_\cP(\tom;
    C^0([0,T]; V_1)\cap L^2(0,T; V_2))}
    &\leq C_{\eps,\ell},\\
    \label{est2_lam}
    \norm{\nabla\widetilde{\mu}^\eps_{\lambda}}_{
    L^\ell_\cP(\tom; L^2(0,T;\bH))}&\leq C_{\eps,\ell},\\
    \label{est3_lam}
    \norm{|\Psi_\lambda''(\widetilde{\varphi}^\eps_{\lambda})|^{\frac12}
    \nabla\widetilde{\varphi}^\eps_{\lambda}}_{
    L^\ell_\cP(\tom;L^2(0,T;\bH))}
    &\leq C_{\eps,\ell}.
\end{align}

\subsubsection{Further estimates}
As $F_\lambda'$ is not uniformly Lipschitz-continuous with respect to $\lambda$, we now test \eqref{eq2_app_lam}
by $\widetilde \varphi_\lambda^\eps-\overline{\varphi_0}$.
Since $\overline{\widetilde\varphi^\eps_\lambda(t)}=
\overline{\varphi_0}\in(-1,1)$ for every $t\in[0,T]$,
$\tP$-almost surely, by the Poincar\`e-Wirtinger
inequality and Assumption \ref{hyp:potential} we infer that,
for almost every $t\in(0,T)$,
\begin{align*}
\int_\OO |\nabla\widetilde{\varphi}^\eps_\lambda(t)|^2\,
\d x
+\int_\OO \Psi_\lambda'(\widetilde{\varphi}^\eps_\lambda(t))
(\widetilde{\varphi}^\eps_\lambda(t)-\overline{\varphi_0})
\,\d x
&=\int_\OO\left[\widetilde{\mu}^\eps_\lambda(t)
-R'(\widetilde{\varphi}^\eps_\lambda(t))\right]
(\widetilde{\varphi}^\eps_\lambda(t)-\overline{\varphi_0})\,\d x\\
&=\int_\OO\left[\widetilde{\mu}^\eps_\lambda(t)-\overline{\widetilde{\mu}^\eps_\lambda(t)}
-R'(\widetilde{\varphi}^\eps_\lambda(t))\right]
(\widetilde{\varphi}^\eps_\lambda(t)-\overline{\varphi_0})\,\d x\\
&\leq C\left[
\norm{\nabla\widetilde{\mu}^\eps_\lambda(t)}_\bH+
\norm{\widetilde{\varphi}^\eps_\lambda(t)}_H\right]
\norm{\nabla\widetilde{\varphi}^\eps_\lambda(t)}_\bH,
\end{align*}
where the constant $C$ is independent of 
$t$, $\lambda$, and $\eps$.
By also recalling the classical inequality 
(see e.g.~\cite{MZ04})
\[
\int_\OO\Psi_\lambda'(\widetilde{\varphi}_{\lambda}^\eps(t)) (\widetilde{\varphi}_{\lambda}^\eps(t)) - \overline{\varphi_0})\,\d x \geq C\|\Psi'_\lambda(\widetilde{\varphi}_{\lambda}^\eps(t))\|_{L^1(\OO)} - C,
\]
the estimate above together with \eqref{est1_lam}--\eqref{est3_lam} implies that 
\begin{equation}
    \label{est4_lam}
    \norm{\Psi_\lambda'(\widetilde{\varphi}_{\lambda}^\eps)}_{
    L^\ell_\cP(\widetilde{\Omega}; L^2(0,T;L^1(\OO)))}\leq C_{\eps,\ell}.
\end{equation}
By comparison in \eqref{eq2_app_lam},
the estimates \eqref{est2_lam} and \eqref{est4_lam}
imply, thanks again to the 
Poincar\`e-Wirtinger
inequality, that 
\begin{equation}
    \label{est5_lam}
    \norm{\widetilde{\mu}_{\lambda}^\eps}_{
    L^\ell_\cP(\widetilde{\Omega}; L^2(0,T;V_1))}\leq C_{\eps,\ell}.
\end{equation}
Testing now \eqref{eq2_app_lam} by $F_\lambda'(\widetilde{\varphi}_\lambda^\eps)$, 
it is immediate to check that 
\begin{align*}
\norm{F_\lambda'(\widetilde{\varphi}_\lambda^\eps)}_{L^2(0,T; H)}^2
&\leq
\int_0^T\int_\OO\left[
\Psi_\lambda'(\widetilde{\varphi}_\lambda^\eps(s))
|\nabla\widetilde{\varphi}_\lambda^\eps(s)|^2+
|F_\lambda'(\widetilde{\varphi}_\lambda^\eps(s))|
\right]\,\d x\,\d s\\
&=\int_0^T\int_\OO\left[
\widetilde{\mu}_\lambda^\eps(s)-
R'(\widetilde{\varphi}_\lambda^\eps(s))
\right]
F_\lambda'(\widetilde{\varphi}_\lambda^\eps(s))
\,\d x\,\d s\\
&\leq 
\frac12\norm{F_\lambda'(\widetilde{\varphi}_\lambda^\eps)}_{L^2(0,T; H)}^2
+\norm{\widetilde{\mu}_\lambda^\eps}_{L^2(0,T; H)}^2
+C_R^2\norm{\widetilde{\varphi}_\lambda^\eps}_{L^2(0,T; H)}^2,
\end{align*}
so that \eqref{est1_lam} and \eqref{est5_lam} yield
\begin{equation}
    \label{est6_lam}
    \norm{F_\lambda'(\widetilde{\varphi}_{\lambda}^\eps)}_{
    L^\ell_\cP(\widetilde{\Omega}; L^2(0,T;H))}\leq C_{\eps,\ell}.
\end{equation}
By iterating the same computations illustrated in Subsubsection \ref{ssec:further_n}, the following additional estimates are readily achieved:
\begin{align*}
    \|\widetilde \varphi^\eps_\lambda\|_{L^\ell_\cP(\tom;L^4(0,T;V_2) \cap W^{\alpha, \ell}(0,T;V_1^*))} & \leq C_{\eps, \ell}, \\
    \|\widetilde{G}^\eps_\lambda(\widetilde{\varphi}^\eps_\lambda)\|_{L^\ell_\cP(\tom;L^\infty(0,T;\cL^2(U,H)) \cap L^4(0,T;\cL^2(U,V_1))} & \leq C_{\eps, \ell}, \\
    \left\|\int_0^\cdot \widetilde{G}^\eps(\widetilde \varphi_{\lambda}^\eps(s)) \: \d \widetilde W^\eps_\lambda(s)\right\|_{L^\ell_\cP(\Omega; W^{\alpha, \ell}(0,T;H)) \cap W^{\beta, 4}(0,T;V_1))} & \leq C_{\eps, \ell}, \\
    \|\tLe\widetilde \varphi^\eps_\lambda\|_{L^\ell_\cP(\tom;C^0([0,T];V_1^*))} & \leq C_{\eps, \ell},
\end{align*}
for all $\alpha \in (0, \frac 12)$ and all $\ell \geq 4$.

\subsection{Passage to the limit as $\boldsymbol{\lambda\to0^+}$}
\label{ssec:limit_lam}
A second compactness argument is enabled by the previous uniform estimates. As many technicalities are dealt with following closely the argument of Subsection \ref{ssec:limit_as_n}, we shall omit some details, focusing instead on the main differences. Without relabeling, any passage to the limit as $\lambda \to 0^+$ is performed along an arbitrary vanishing sequence $\{\lambda_k\}_{k \in \enne} \subset (0,1)$.

\subsubsection{Tightness of the laws of Yosida solutions}
The analogue result to Lemma \ref{lem:tight} is given hereafter. Its proof is carried out following the same strategy and is therefore omitted.
\begin{lem} \label{lem:tight_lam}
    The family of laws of 
    \[
    \{\widetilde\varphi_{\lambda}^\eps,\, \widetilde G^\eps_{\lambda}(\widetilde\varphi_{\lambda}^\eps) \cdot \widetilde W_\lambda^\eps,\, \widetilde W_\lambda^\eps,\, \widetilde{\b \Sigma}^\eps_\lambda\}_{\lambda \in (0,1)}
    \]
    is tight in the space
    \[
    \Xi \times C^0([0,T]; H) \times C^0([0,T]; U_0) \times L^2(0,T;\cL^2(U,\b H)))
    \]
    where $\Xi = C^0([0,T];H) \cap L^2(0,T;V_1)$.
\end{lem}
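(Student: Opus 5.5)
The plan is to repeat the argument of Lemma~\ref{lem:tight} verbatim, with the $n$-uniform bounds replaced by the $\lambda$-uniform bounds of Subsection~\ref{ssec:est_lam}. Tightness of a family of laws on a product space follows from tightness of each marginal, so we treat the four components separately.

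For $\widetilde W^\eps_\lambda$ there is nothing to prove. Each $\widetilde W^\eps_\lambda$ is a cylindrical Wiener process on $U$, so all these laws coincide with the single Wiener measure on the Polish space $C^0([0,T];U_0)$, which is tight.

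For $\widetilde\varphi^\eps_\lambda$ we use the bounds \eqref{est1_lam} and $\|\widetilde\varphi^\eps_\lambda\|_{L^\ell_\cP(\tom;W^{\alpha,\ell}(0,T;V_1^*))}\le C_{\eps,\ell}$. We fix $\ell>2$ and $\alpha\in(\frac1\ell,\frac12)$, so that $\alpha\ell>1$. By Simon's compactness results (\cite[Corollary 5]{simon}) the sets
\[
B_R:=\bigl\{v:\ \|v\|_{L^\infty(0,T;V_1)}+\|v\|_{L^2(0,T;V_2)}+\|v\|_{W^{\alpha,\ell}(0,T;V_1^*)}\le R\bigr\}
\]
are relatively compact in $\Xi=C^0([0,T];H)\cap L^2(0,T;V_1)$. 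The Markov inequality gives $\tP(\widetilde\varphi^\eps_\lambda\notin B_R)\le C_{\eps,\ell}R^{-1}$ uniformly in $\lambda$, which is tightness.

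For the stochastic integrals we first bound $\widetilde G^\eps_\lambda(\widetilde\varphi^\eps_\lambda)$ in $L^\ell_\cP(\tom;L^\infty(0,T;\cL^2(U,H))\cap L^4(0,T;\cL^2(U,V_1)))$ uniformly in $\lambda$. This uses the $L^4(0,T;V_2)$ bound, the uniform estimates on $\b\Sigma^\eps$, and Remark~\ref{rmk:prop_noise}. Then \cite[Lemma 2.1]{fland-gat} yields uniform bounds in $W^{\alpha,\ell}(0,T;H)\cap W^{\beta,4}(0,T;V_1)$. We choose $\beta\in(\frac14,\frac12)$, so that $W^{\beta,4}(0,T;V_1)\embed C^0([0,T];H)$ compactly, and Markov's inequality again gives tightness in $C^0([0,T];H)$.

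For the transport fields we need a law-level version of the contraction bound $\|\b\Sigma^\eps_n\|_{\cL^2(U,\b V_1)}\le\|\b\Sigma^\eps\|_{\cL^2(U,\b V_1)}$. Since $\widetilde{\b\Sigma}^\eps_{n}\laweq\b\Sigma^\eps_n$ and $\widetilde{\b\Sigma}^\eps_n\to\widetilde{\b\Sigma}^\eps_\lambda$, lower semicontinuity gives $\widetilde{\b\Sigma}^\eps_\lambda$ the same $L^\infty$ bound in $L^\infty(0,T;\cL^2(U,\b V_1))$ uniformly in $\lambda$. In fact the law of $\widetilde{\b\Sigma}^\eps_\lambda$ equals that of $\b\Sigma^\eps$, because $\b P_n\b\Sigma^\eps\to\b\Sigma^\eps$. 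Either way the family of laws is a single law or a bounded family, so tightness in $L^2(0,T;\cL^2(U,\b H))$ follows.

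The step needing the most care is this last one. Compactness of $\b V_1\embed\b H$ at fixed time is not enough for the Bochner space $L^2(0,T;\cL^2(U,\b H))$: one needs time compactness or an Aubin--Lions type criterion. I would avoid this issue by identifying the law. Since $\widetilde{\b\Sigma}^\eps_\lambda\laweq\b\Sigma^\eps$ for every $\lambda$, the family of laws is a singleton on a Polish space and is therefore trivially tight. For the identification I would pass to the limit in $\widetilde{\b\Sigma}^\eps_n\laweq\b P_n\b\Sigma^\eps$ using $\b P_n\b\Sigma^\eps\to\b\Sigma^\eps$ in $L^2(0,T;\cL^2(U,\b H))$ almost surely.
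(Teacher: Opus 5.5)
Your proposal is correct and matches the paper's argument. The paper omits the proof: the $\varphi$- and stochastic-integral components are handled as in Lemma~\ref{lem:tight} via the $\lambda$-uniform bounds of Subsection~\ref{ssec:est_lam}, Simon's compactness, Flandoli--Gatarek and Markov's inequality, and for $\widetilde W^\eps_\lambda$ and $\widetilde{\b\Sigma}^\eps_\lambda$ the paper simply notes, as you do, that their laws do not depend on $\lambda$. Your identification $\widetilde{\b\Sigma}^\eps_\lambda\laweq\b\Sigma^\eps$, via the almost sure convergence $\b P_n\b\Sigma^\eps\to\b\Sigma^\eps$ in $L^2(0,T;\cL^2(U,\bH))$, supplies the justification the paper leaves implicit, and you are right to drop the ``bounded family'' alternative, since a bound in $L^\infty(0,T;\cL^2(U,\b V_1))$ alone does not give tightness in $L^2(0,T;\cL^2(U,\bH))$.
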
 \noindent
Let us note that laws of the transport fields $\widetilde{\b \Sigma}^\eps$ and the Wiener processes $\widetilde W_\lambda^\eps$ are independent of $\lambda$, and therefore the tightness of their laws follows trivially in any separable Banach space where their paths are well defined.
Again, owing to the Prokhorov and Skorokhod theorems, there exists a probability space $(\com,\, \cf,\, \cp)$ and a sequence of random variables $\{Y_\lambda^\eps :(\com, \cf) \to (\tom, \tF)\}_{\lambda \in (0,1)}$ such that the law of $Y^\eps_\lambda$ equals $\tP$ for all $\lambda \in (0,1)$ and, up to subsequences, all of the following convergences hold in the limit $\lambda \to 0^+$:
\begin{align*}
    \widecheck{\varphi}^\eps_{\lambda} := \widetilde \varphi_{\lambda}^\eps \circ Y^\eps_\lambda \to \widecheck{\varphi}^\eps & \qquad \text{in }L^p(\com; C^0([0,T];H) \cap L^2(0,T;V_1)),\,\quad \forall \, p < \ell; \\
    \widecheck{\varphi}^\eps_{\lambda} \overset{*}{\rightharpoonup} \widecheck{\varphi}^\eps & \qquad \text{in }L^\ell_w(\com; L^\infty([0,T];V_1)) \cap L^\ell(\tom; W^{\alpha, \ell}(0,T;V_1^*)); \\
    \widecheck{\varphi}^\eps_{\lambda} \rightharpoonup \widecheck{\varphi}^\eps & \qquad \text{in }L^\ell(\com; L^4([0,T];V_2)); \\
    \widecheck{\mu}^\eps_{\lambda} := \widetilde {\mu}^\eps_{\lambda} \circ Y^\eps_\lambda \rightharpoonup\widecheck{\mu}^\eps & \qquad \text{in }L^\ell(\com; L^2([0,T];V_1)); \\
    \widecheck{I}^\eps_{\lambda} := (\widetilde G^\eps_\lambda(\widetilde \varphi_{\lambda}^\eps) \cdot \widetilde W_\lambda^\eps) \circ Y^\eps_\lambda  \to \widecheck{I}^\eps & \qquad \text{in }L^p(\com; C^0([0,T];H)),\, \quad \forall \, p < \ell; \\
    \widecheck{W}_\lambda^\eps := \widetilde W_\lambda^\eps \circ Y^\eps_\lambda \to \widecheck W^\eps & \qquad \text{in }L^p(\com;C^0([0,T];U_0)),\,\quad \forall \, p < \ell; \\
    \widecheck{\b \Sigma}^\eps_\lambda := \widetilde{\b \Sigma}^\eps_\lambda \circ Y^\eps_\lambda \to \widecheck{\b \Sigma}^\eps & \qquad \text{in }L^p(\com; L^2([0,T];\cL^2(U, \b H)),\,\qquad \forall \, p < \ell,
\end{align*}
where the limiting processes satisfy
\begin{align*}
    \widecheck{\varphi}^\eps & \in L^\ell_\cP(\com; C^0([0,T];H)) \cap L^\ell_w(\tom; L^\infty(0,T;V_1)) \cap L^\ell_\cP(\com; L^4(0,T;V_2)) \cap L^\ell_\cP(\com; W^{\alpha, \ell}(0,T;V_1^*));\\
    \widecheck{\mu}^\eps & \in L^\ell_\cP(\com; L^2(0,T;V_1)); \\
    \widecheck{I}^\eps & \in L^\ell_\cP(\com; C^0([0,T]; H)); \\
    \widecheck{W} & \in L^\ell_\cP(\com; C^0([0,T];U_0));\\
    \widecheck{\b \Sigma}^\eps & \in L^\ell_\cP(\com; L^2(0,T; \cL^2(U,H))).
\end{align*}

\subsubsection{Identification of the limit solution} \label{ssec:id_lam}
As
\[
\widecheck{\varphi}^\eps_{\lambda} \to \widecheck{\varphi}^\eps  
\]
in the Hilbert space $L^2(\com;L^2(0,T;H))$, and given that the equivalence of laws entails that
\[
\Psi'_\lambda(\widecheck{\varphi}^\eps_\lambda) \text{ is uniformly bounded in }L^2(\com;L^2(0,T;H))
\]
we have that, up to subsequences,
\[
\Psi'_\lambda(\widecheck{\varphi}^\eps_\lambda) \rightharpoonup \xi = \Psi'(\widecheck{\varphi}^\eps) \qquad \text{in }L^\ell(\com;L^2(0,T;H))
\]
by the strong-weak closure of maximal monotone operators. Then, since $R'$ is Lipschitz continuous by Assumption \ref{hyp:potential}, it is immediate to show that
\[
F'_\lambda(\widecheck{\varphi}^\eps_\lambda) \rightharpoonup F'(\widecheck{\varphi}^\eps) \qquad \text{in }L^\ell(\com;L^2(0,T;H))
\]
for all $\ell \geq 2$. Arguing as in Subsubsection \ref{ssec:limit_n}, we define
\[
\widecheck{G}^\eps: \Omega \times(0,T) \times V_1 \to \cL^2(U, H)
\]
by
\[
\widecheck{G}^\eps(\omega, t, \psi)[u_k] = \nabla \psi \cdot \widecheck{\b \Sigma}^\eps u_k = \nabla \psi \cdot \widecheck{\b \sigma}_{ k}^\eps(\omega, t)
\]
for all $k \in \mathbb N$. It is straightforward to show that
\[
\widecheck{G}^\eps_\lambda(\widecheck {\varphi}^\eps_{\lambda}) \to \widecheck{G}^\eps(\widecheck {\varphi}^\eps) \qquad \text{in }L^p(\tom;L^2(0,T;\cL^2(U,H)))
\]
for all $p < \ell$, and, analogously,
\[
{\widecheck{\mathcal{L}}_{\text{Strat}}^{\eps,\lambda}} \to \cLe\widecheck {\varphi}^\eps \qquad \text{in }L^p(\tom;L^2(0,T;V_1^*))
\]
for all $p < \ell$. Moreover, the limit stochastic integral $\widecheck I^\eps$ satisfies
\[
\widecheck I^\eps = \int_0^\cdot \widecheck G^\eps(\widecheck \varphi^\eps(s)) \: \d \widecheck W^\eps(s),
\]
that is, an $H$-valued $\widecheck \cF^\eps_{t}$-adapted martingale, where
\[
\widecheck \cF^\eps_{t} := \sigma\left\{ \widecheck \varphi^\eps(s), \, \widecheck I^\eps(s),\,\widecheck W^\eps(s),\,s \in [0,t] \right\}.
\]Owing to all of the above, in the limit as $\lambda \to 0^+$, testing \eqref{eq1_app_lam} and \eqref{eq2_app_lam} against any $\psi \in V_1$ and letting $\lambda \to 0^+$, thanks once more to preservation of laws through $Y_\lambda^\eps$ and the dominated convergence theorem we have
\begin{multline*}
			(\widecheck {\varphi}^\eps(t),\psi)_H +
			\int_0^t\left[\int_\OO
			m(\widecheck {\varphi}^\eps(t))\nabla  \widecheck \mu^\eps(s)\cdot \nabla \psi\,\d x
            +\kappa\ip{\cLe(s,\widecheck {\varphi}^\eps(s))}{\psi}_{V_1^*, V_1}
            \right]\,\d s\\
			= ( \widecheck {\varphi}^\eps(0),\psi)_{H} +
			\left(\int_0^t \nabla\widecheck {\varphi}^\eps(s)\cdot\widecheck{\b \Sigma}^\eps(s)\,\d  \widecheck W(s), \psi\right)_{H}
\end{multline*}
$\cp$-almost surely. The equation for the chemical potential $\widecheck \mu^\eps$ reads
\[
\int_\OO \widecheck \mu^\eps(t)\psi \: \d x = \int_\OO \nabla \widecheck {\varphi}^\eps(t) \cdot \nabla \psi \: \d x + \int_\OO F'(\widecheck {\varphi}^\eps(t)) \psi \: \d x
\]
for almost every $t \in [0,T]$ and $\cp$-almost surely. More precisely, we obtain a martingale solution
\[
\left(\widecheck{\Omega},\,
\widecheck{\cF}, \,
(\widecheck{\cF}^\eps_{t})_{t\in[0,T]},\,
\widecheck{\P}, \,
\widecheck{W}^\eps, \,
\widecheck{\b\Sigma}^\eps,\,
\widecheck{\varphi}^\eps
\right)
\]
of the regularized problem 
\begin{numcases}{}
   \label{eq1_app_eps}
  \d\widecheck{\varphi}^\eps 
  - \div \left[m(\widecheck{\varphi}^\eps)
  \nabla\widecheck{\mu}^\eps\right]\,\d t + 
  \kappa\cLe\widecheck{\varphi}^\eps\,\d t
  =\nabla \widecheck{\varphi}^\eps\cdot \widecheck{\b\Sigma}^\eps\,\d\widecheck W^\eps
  \qquad&\text{in }$(0,T)\times\OO$,\\
  \label{eq2_app_eps}
  \widecheck\mu^\eps=-\Delta\widecheck\varphi^\eps + 
  F'(\widecheck\varphi^\eps)
  \qquad&\text{in }$(0,T)\times\OO$,\\
  \label{eq3_app_eps}
  \partial_{\b n}\widecheck\varphi^\eps = 
  m(\widecheck\varphi^\eps)\partial_{\b n}
  \widecheck\mu^\eps = 0
  \qquad&\text{in }$(0,T)\times\partial\OO$,\\
  \label{eq4_app_eps}
  \widecheck\varphi^\eps(0)=\varphi_0
  \qquad&\text{in }$\OO$,
\end{numcases}
where we recall that $\widecheck{\b\Sigma}^\eps\laweq\b\Sigma^\eps$ for every $\eps \in (0,1)$.

\subsection{Uniform estimates with respect to $\boldsymbol{\eps}$}
\label{ssec:est_eps}
Finally, we present a third set of estimates that eventually enables us to get rid of the final approximation scheme. Although the needed estimates are the same, the strategies are different and leverage a crucial $L^\infty$-bound.

\subsubsection{A preliminary estimate}
The previous passage to the limit entails that
\[
\Psi'(\widecheck \varphi^\eps(\omega,t)) \in H
\]
for $\cp \otimes \d t$-almost any $(\omega, \,t) \in \com \times [0,T]$. Given the singular nature of $\Psi'$ prescribed by Assumption \ref{hyp:potential}, a simple contradiction argument shows that
\begin{equation} \label{eq:Linf_est}
    |\widecheck \varphi^\eps(\omega, t, x)| \leq 1
\end{equation}
for $\cp \otimes \d t \otimes \d x$-almost any $(\omega, t, x) \in \com \times [0,T] \times \OO$. As we shall see, this key property enables to refine the previous uniform estimates.

\subsubsection{The energy estimate}
The It\^o formula for the free energy functional yields
\begin{multline}
\label{ito:en_app_eps}
\frac12\norm{\nabla\widecheck\varphi^\eps(t)}_{\bH}^2
    +\norm{F(\widecheck\varphi^\eps(t))}_{L^1(\OO)}
    +\int_0^t\int_\OO
    m(\widecheck\varphi^\eps(s))
    |\nabla\widecheck\mu^\eps(s)|^2\,\d x\,\d s \\
    +\frac\kappa2\int_0^t\sum_{k\in\enne}
    \int_\OO
    [\nabla \widecheck\varphi^\eps(s)\cdot \widecheck{\b{\sigma}}^\eps_k(s)]
[\nabla\widecheck\mu^\eps(s)\cdot \widecheck{\b{\sigma}}^\eps_k(s)]\,\d x\,\d s\\=\frac12\norm{\nabla\varphi_0}_{\bH}^2
    +\norm{F(\varphi_0)}_{L^1(\OO)}
    +\int_0^t\left(\widecheck\mu^\eps(s), \nabla \widecheck\varphi^\eps(s)
    \cdot\widecheck{\b\Sigma}^\eps(s)\,\d\widecheck W^\eps(s)\right)_{H}\\
    +\frac12\int_0^t\sum_{k\in\enne}
    \int_\OO\left[
    |\nabla[\nabla \widecheck\varphi^\eps(s)\cdot
    \widecheck{\b\sigma}^\eps_k(s)]|^2
    +F''(\widecheck\varphi^\eps(s))
    |\nabla \widecheck\varphi^\eps(s)\cdot\widecheck{\b\sigma}^\eps_k(s)|^2
    \right]\,\d x\,\d s
\end{multline}
for every $t\in[0,T]$, $\widecheck\P$-almost surely. 
By arguing as in Subsection~\ref{ssec:est_n}, 
on the left-hand side we have 
\[
\int_0^t\int_\OO
    m(\widecheck\varphi^\eps(s))
    |\nabla\widecheck\mu^\eps(s)|^2\,\d x\,\d s
    \geq c_m\norm{\nabla\widecheck\mu^\eps}^2_{L^2(0,t;\bH)},
\]
and
\begin{align*}
    &\frac\kappa2\int_0^t\sum_{k\in\enne}
    \int_\OO
    |\nabla \widecheck\varphi^\eps(s)\cdot \widecheck{\b{\sigma}}^\eps_k(s)|
|\nabla\widecheck\mu^\eps(s)\cdot \widecheck{\b{\sigma}}^\eps_k(s)|\,\d x\,\d s\\
&\hspace{2cm}\leq\frac\kappa2\int_0^t
\sum_{k\in\enne}\norm{\widecheck{\b\sigma}_k^\eps(s)}_{\b{L}^\infty(\OO)}^2
\norm{\nabla\widecheck\varphi^\eps(s)}_\bH
\norm{\nabla\widecheck\mu^\eps}_\bH\,\d s\\
&\hspace{2cm}\leq\frac{c_m}4
\norm{\nabla\widecheck\mu^\eps}_{L^2(0,t;\bH)}^2
+\frac{\kappa^2 C_\infty^2}{4c_m}\int_0^t\left(
\sum_{k\in\enne}\norm{\widecheck{\b\sigma}^\eps_k(s)}_{\W}^2
\right)^2
\norm{\nabla\widecheck\varphi^\eps(s)}_\bH^2\,\d s.
\end{align*}
Moreover, recalling the properties of the noise coefficient
collected in Remark~\ref{rmk:prop_noise}, one has that 
\begin{multline*}
    \frac12\int_0^t\sum_{k\in\enne}
    \int_\OO\left[
    |\nabla[\nabla \widecheck\varphi^\eps(s)
    \cdot\widecheck{\b\sigma}^\eps_k(s)]|^2
    +F''(\widecheck\varphi^\eps(s))
    |\nabla \widecheck\varphi^\eps(s)\cdot\widecheck{\b\sigma}^\eps_k(s)|^2
    \right]\,\d x\,\d s\\
    \leq C\int_0^t\left(\sum_{k\in\enne}
    \norm{\widecheck{\b\sigma}^\eps_k(s)}_{\W}^2\right)
    \left[\norm{\widecheck\varphi^\eps(s)}_{V_2}^2
    +\norm{|F''(\widecheck\varphi^\eps(s))|^{\frac12}
    \nabla\widecheck\varphi^\eps(s)}_{\bH}^2
    \right]\,\d s.
\end{multline*}
By collecting all of the above and by rearranging the terms in \eqref{ito:en_app_eps} we obtain then 
\begin{multline}
\label{ito:en_app2_eps}\frac12\norm{\nabla\widecheck\varphi^\eps(t)}_{\bH}^2
    +\norm{F(\widecheck\varphi^\eps(t))}_{L^1(\OO)}
    +\frac{c_m}2
\norm{\nabla\widecheck\mu^\eps}_{L^2(0,t;\bH)}^2\\
\leq C\norm{\nabla\varphi_0}_{\bH}^2
    +\norm{F(\varphi_0)}_{L^1(\OO)}
    +\frac{\kappa^2}{4c_m}\int_0^t\left(
\sum_{k\in\enne}
\norm{\widecheck{\b\sigma}^\eps_k(s)}_{\W}^2
\right)^2
\norm{\nabla\widecheck\varphi^\eps(s)}_\bH^2\,\d s\\
    +C\int_0^t\left(\sum_{k\in\enne}
    \norm{\widecheck{\b\sigma}^\eps_k(s)}_{\W}^2\right)
    \left[\norm{\widecheck\varphi^\eps(s)}_{V_2}^2
    +\norm{|F''(\widecheck\varphi^\eps(s))|^{\frac12}
    \nabla\widecheck\varphi^\eps(s)}_{\bH}^2
    \right]\,\d s\\
    +\int_0^t\left(\widecheck\mu^\eps(s), \nabla \widecheck\varphi^\eps(s)
    \cdot\widecheck{\b\Sigma}^\eps(s)\,\d\widecheck W^\eps(s)\right)_{H}.
\end{multline}
Now, given any $K > 0$, by testing \eqref{eq2_app_eps} by 
$-K\Delta\widecheck\varphi^\eps$, 
we deduce that
\begin{align*}
K
\left[
\norm{\Delta\widecheck\varphi^\eps(s)}_H^2
+\int_\OO \Psi''(\widecheck\varphi^\eps(s))
|\nabla\widecheck\varphi^\eps(s)|^2 \: \d x
\right]
&=K\int_\OO\nabla\widecheck\mu^\eps(s)\cdot
\nabla\widecheck\varphi^\eps(s)\,\d x
-K\int_\OO R''(\widecheck\varphi^\eps(s))|
\nabla\widecheck\varphi^\eps(s)|^2\,\d x\\
&\leq K\norm{\nabla\widecheck\mu^\eps(s)}_{\bH}
\norm{\nabla\widecheck\varphi^\eps(s)}_\bH
+KC_R\norm{\nabla\widecheck\varphi^\eps(s)}_\bH^2,
\end{align*}
for any $K > 0$, $\delta > 0$ and almost any $s \in [0,T]$, $\widecheck \P$-almost surely. In turn, thanks to the Young inequality, it follows that 
\begin{align*}
    &K\int_0^t
    \left(\sum_{k\in\enne}
    \norm{\widecheck{\b\sigma}^\eps_k(s)}_{\W}^2\right)
    \left[
    \norm{\widecheck\varphi^\eps(s)}_{V_2}^2
    +\norm{|\Psi''(\widecheck\varphi^\eps(s))|^{\frac12}
    \nabla\widecheck\varphi^\eps(s)}_{\bH}^2
    \right]\,\d s\\
    &\hspace{1.5cm}\leq K\int_0^t
    \left(\sum_{k\in\enne}
    \norm{\widecheck{\b\sigma}^\eps_k(s)}_{\W}^2\right)
    \left[ |\OO|+
    \norm{\nabla\widecheck\mu^\eps(s)}_{\bH}
\norm{\nabla\widecheck\varphi^\eps(s)}_\bH
+C_R\norm{\nabla\widecheck\varphi^\eps(s)}_\bH^2
    \right]\,\d s\\
    &\hspace{1.5cm}\leq\frac{c_m}4\norm{\nabla\widecheck\mu^\eps}_{L^2(0,t;\bH)}^2
    +K^2\left(\frac1{c_m}+C_R\right)\int_0^t
    \left[1+\left(\sum_{k\in\enne}
    \norm{\widecheck{\b\sigma}^\eps_k(s)}_{\W}^2\right)^2\right]
    \norm{\nabla\widecheck\varphi^\eps(s)}_\bH^2\,\d s \\
    & \hspace{8cm} +K|\OO|\int_0^t \sum_{k\in\enne}
    \norm{\widecheck{\b\sigma}^\eps_k(s)}_{\W}^2\! \d s.
\end{align*}
By choosing $K=2C$ and by summing this last inequality 
to \eqref{ito:en_app2_eps} we infer then that 
\begin{multline}
\label{ito:en_app3_eps}  \frac12\norm{\nabla\widecheck\varphi^\eps(t)}_{\bH}^2
    +\norm{F(\widecheck\varphi^\eps(t))}_{L^1(\OO)}
    +\frac{c_m}4
\norm{\nabla\widecheck\mu^\eps}_{L^2(0,t;\bH)}^2\\
    +C\int_0^t\left(\sum_{k\in\enne}
    \norm{\widecheck{\b\sigma}^\eps_k(s)}_{\W}^2\right)
    \left[\norm{\widecheck\varphi^\eps(s)}_{V_2}^2
    +\norm{|\Psi''(\widecheck\varphi^\eps(s))|^{\frac12}
    \nabla\widecheck\varphi^\eps(s)}_{\bH}^2
    \right]\,\d s\\
    \leq C\norm{\nabla\varphi_0}_{\bH}^2
    +\norm{F(\varphi_0)}_{L^1(\OO)}
    +\frac12\int_0^t
    \widecheck L_\eps'(s)
\norm{\nabla\widecheck\varphi^\eps(s)}_\bH^2\,\d s\\+\int_0^t\left(\widecheck\mu^\eps(s), \nabla \widecheck\varphi^\eps(s)
    \cdot\b\Sigma^\eps(s)\,\d\widecheck W^\eps(s)\right)_{H}+2C|\OO|\int_0^t \sum_{k\in\enne}
    \norm{\widecheck{\b\sigma}^\eps_k(s)}_{\W}^2\!\d s,
\end{multline}
where the real valued process 
$\widecheck L_\eps\in L^1(\widecheck\Omega; W^{1,1}(0,T))$ is defined as
\[
  \widecheck L_\eps(t):=
  2\left(\frac{\kappa^2}{4c_m}+\frac{4C^2}{c_m}+4C^2C_R\right)
    \int_0^t\left[1+C_R+\left(
\sum_{k\in\enne}\norm{\widecheck{\b\sigma}^\eps_k(s)}_{
\W}^2
\right)^2\right]\,\d s, \quad t\in[0,T].
\]
The facts that 
$\widecheck{\b\Sigma}^\eps\laweq
\b\Sigma^\eps$ for every $\eps\in(0,1)$ and
$\norm{\b\sigma_k^\eps}_{\W}
\leq\norm{\b\sigma_k}_{\W}$
almost everywhere in $\Omega\times(0,T)$
for every $k\in\enne$ imply,
together with 
Assumption \ref{hyp:noise}, that 
$\widecheck L_\eps$ is well defined and that
for every $\ell\geq1$ there exists a constant 
$C_\ell$, independent of $\eps$, such that
$\exp\norm{\widecheck L^\eps}_{W^{1,1}([0,T])}\in L^\ell(\widecheck\Omega)$ with 
\begin{equation}
    \label{est_L}
    \norm{\exp\norm{\widecheck L_\eps}_{W^{1,1}(0,T)}}_{L^\ell(\widecheck\Omega)}\leq C_\ell.
\end{equation}
Now, from \eqref{ito:en_app3_eps} 
and applying the It\^{o} lemma to the functional
\[
t \mapsto \exp(-2\widecheck L_\eps(t))\mathcal E(\widecheck \varphi^\eps(t))
\]
it follows that 
\begin{multline}
\label{ito:en_app4_eps}
e^{-2\widecheck L_\eps(t)}\left[\frac12 
    \norm{\nabla\widecheck\varphi^\eps(t)}_{\bH}^2
    +\norm{F(\widecheck\varphi^\eps(t))}_{L^1(\OO)}\right]
    +\frac{c_m}4
\norm{e^{-\widecheck L_\eps}\nabla\widecheck\mu^\eps}_{L^2(0,t;\bH)}^2\\
    +C\int_0^t e^{-2\widecheck L_\eps(s)}\left(\sum_{k\in\enne}
    \norm{\widecheck{\b\sigma}^\eps_k(s)}_{\W}^2\right)
    \left[\norm{\widecheck\varphi^\eps(s)}_{V_2}^2
    +\norm{|\Psi''(\widecheck\varphi^\eps(s))|^{\frac12}
    \nabla\widecheck\varphi^\eps(s)}_{\bH}^2
    \right]\,\d s\\+
    \int_0^t
    \widecheck L_\eps'(s) e^{-2\widecheck L_\eps(s)}\left[
    \frac12\norm{\nabla\widecheck\varphi^\eps(s)}_\bH^2
    +\norm{F(\widecheck\varphi^\eps(s))}_{L^1(\OO)}\right]\,\d s\\
    \leq C\norm{\nabla\varphi_0}_{\bH}^2
    +\norm{F(\varphi_0)}_{L^1(\OO)}
    +\int_0^te^{-2\widecheck L_\eps(s)}\left(\widecheck\mu^\eps(s), \nabla \widecheck\varphi^\eps(s)
    \cdot\widecheck{\b\Sigma}^\eps(s)\,\d\widecheck W^\eps(s)\right)_{H}.
\end{multline}
Therefore, we are only left with estimating the stochastic integral.
To this end, recalling that 
\[
\int_0^t\left(\widecheck\mu^\eps(s), 
\nabla \widecheck\varphi^\eps(s))
    \cdot\widecheck{\b\Sigma}^\eps(s)\,\d\widecheck W(s)\right)_{H}
    =-\int_0^t\left(\nabla\widecheck\mu^\eps(s), 
    \widecheck\varphi^\eps(s)
    \widecheck{\b\Sigma}^\eps(s)\,\d\widecheck W(s)\right)_{\bH},
\]
we can use the Burkholder-Davis-Gundy, H\"older, and Young
inequalities, together with \eqref{eq:Linf_est} and the facts that $\widecheck{\b\Sigma}^\eps\laweq\b\Sigma^\eps$, and 
$\norm{\b\sigma_k^\eps}_{\bH}
\leq\norm{\b\sigma_k}_{\bH}$
in $\Omega\times(0,T)$
for every $k\in\enne$, to infer that
for every $\ell\in[2,q]$ and $\delta>0$,
\begin{align*}
    &\widecheck{\E}\sup_{t \in [0,T]}\left|
    \int_0^te^{-2\widecheck L_\eps(s)}\left(\widecheck\mu^\eps(s), \nabla \widecheck\varphi^\eps(s)
    \cdot\widecheck{\b\Sigma}^\eps(s)\,\d\widecheck W(s)\right)_{H}
    \right|^{\frac\ell2}\\
    &\hspace{2cm}\leq C_\ell
    \widecheck{\E}\left[\int_0^t e^{-4\widecheck L_\eps(s)}
    \norm{\nabla\widecheck\mu^\eps(s)}_{\bH}^2
    \norm{\widecheck\varphi^\eps(s)\widecheck{\b\Sigma}^\eps(s)}_{\cL^2(U,\bH)}^2
    \,\d s\right]^{\frac\ell4}\\
    &\hspace{2cm}\leq C_{\ell}
    \widecheck{\E}\left[\int_0^t e^{-4\widecheck L_\eps(s)}
    \norm{\nabla\widecheck\mu^\eps(s)}_{\bH}^2
    \norm{\widecheck\varphi^\eps(s)}_{L^\infty(\OO)}^2
    \sum_{k\in\enne}
    \norm{\widecheck{\b\sigma}^\eps_k(s)}_{\bH}^2
    \,\d s\right]^{\frac\ell4}\\
    &\hspace{2cm}\leq C_\ell\widecheck{\E}\left[
    \norm{e^{-\widecheck L_\eps}\nabla\widecheck\mu^\eps}_{L^2(0,t;\bH)}^{\frac\ell2}
    \left(\sup_{s\in[0,T]}\sum_{k\in\enne}
\norm{\widecheck{\b\sigma}^\eps_k(s)}^{2}_{\bH}\right)^{\frac\ell4}
\right]\\
    &\hspace{2cm}\leq \delta
    \widecheck{\E}\norm{e^{-\widecheck L_\eps}\nabla\widecheck\mu^\eps}^\ell_{L^2(0,t;\bH)}
    +\frac{C_{\ell}}\delta,
\end{align*}
where $C_\ell>0$ is finite and independent of $\eps$
thanks to assumption \ref{hyp:noise}.
Hence, we
can raise \eqref{ito:en_app4_eps} to the $\frac\ell2$-power,
take supremums in time, and
choose $\delta>0$ small enough 
in order to suitably rearrange the terms. 
By recalling also that 
$|\overline{\widecheck\varphi^\eps(t)}|=|\overline{\varphi_0}|\leq1$ for all $t\in[0,T]$ and that $\widecheck L_\eps'\geq0$, 
thanks to the Poincar\`e-Wirtinger inequality we obtain that
for every $\ell\in[2,q]$,
\begin{multline}
    \label{ito:en_app5_eps}
    \widecheck{\E}\sup_{s\in[0,T]}
    e^{-\ell \widecheck L_\eps(s)}
    \norm{\widecheck\varphi^\eps(s)}_{V_1}^\ell
    +\widecheck{\E}\sup_{s\in[0,T]}
    e^{-\ell \widecheck L_\eps(s)}
    \norm{F(\widecheck\varphi^\eps(s))}_{L^1(\OO)}^{\frac\ell2}
    +\widecheck{\E}\norm{e^{-\widecheck L_\eps}
    \nabla\widecheck\mu^\eps}_{L^2(0,T; \bH)}^{\ell}\\
    \leq C_{\ell}\left(1+\norm{\varphi_0}_{V_1}^\ell
    +\norm{F(\varphi_0)}_{L^1(\OO)}^\ell\right),
\end{multline}
where the implicit constant appearing on the right-hand side
has been updated and is independent of $\eps$.
By the monotonicity and continuity of the real exponential function,
for every $p\in[2,q)$ one has,
thanks to the H\"older inequality
with exponents $\frac{q}{p}>1$ and $\frac{q}{q-p}>1$,
as well as the estimate \eqref{ito:en_app5_eps}, that
\begin{align*}
    &\widecheck{\E}\sup_{s\in[0,T]}
    \norm{\widecheck\varphi^\eps(s)}_{V_1}^p
    +\widecheck{\E}\sup_{s\in[0,T]}
    \norm{F(\widecheck\varphi^\eps(s))}_{L^1(\OO)}^{\frac{p}{2}}
    +\widecheck{\E}\norm{\nabla\widecheck\mu^\eps}_{L^2(0,T; \bH)}^{p}\\
    &\hspace{2cm}\leq\widecheck{\E}
    \left[\exp\left(p \sup_{t\in[0,T]}\widecheck L_\eps(t)\right)
    \cdot
    \sup_{s\in[0,T]}
    e^{-p \widecheck L_\eps(s)}\norm{\widecheck\varphi^\eps(s)}_{V_1}^p
    \right]\\
    &\hspace{4cm}+\widecheck{\E}
    \left[
    \exp\left(p \sup_{t\in[0,T]}\widecheck L_\eps(t)\right)
    \cdot
    \sup_{s\in[0,T]}
    e^{-p \widecheck L_\eps(s)}\norm{F(\widecheck\varphi^\eps(s))}_{L^1(\OO)}^{\frac{p}2}
    \right]\\
    &\hspace{4cm}+\widecheck{\E}\left[
    \exp\left(p \sup_{t\in[0,T]}\widecheck L^\eps(t)\right)\cdot
    \left(\int_0^Te^{-2\widecheck L_\eps(s)}
    \norm{\nabla\widecheck\mu^\eps(s)}_\bH^2\,\d s\right)^{\frac{p}2}
    \right]\\
    &\hspace{2cm}\leq \widecheck{\E}
    \left[\exp\left(\frac{pq}{q-p}\sup_{t\in[0,T]}\widecheck L_\eps(t)\right)
    \right]^{\frac{q-p}{q}}
    C_q^{\frac pq}\left(1+\norm{\varphi_0}_{V_1}^q
    +\norm{F(\varphi_0)}_{L^1(\OO)}^q\right)^{\frac pq}.
\end{align*}
Hence, estimate \eqref{est_L} yields
that for all $p\in[2,q)$
\begin{align}
  \label{est1_eps}
\norm{\widecheck\varphi^\eps}_{L^p_w(\widecheck\Omega; L^\infty(0,T; V_1))}&\leq C_p,\\
  \label{est2_eps}
  \norm{F(\widecheck\varphi^\eps)}_{
  L^{\frac{p}2}_\cP(\widecheck\Omega; L^\infty(0,T; L^1(\OO)))}&\leq C_p,\\
  \label{est3_eps}
  \norm{\nabla\widecheck\mu^\eps}_{L^p_\cP(\widecheck\Omega; L^2(0,T; \bH))}&\leq C_p,
\end{align}
where the constant $C_p>0$ is independent of $\eps$.

\subsubsection{Further estimates}
We are now in a position to proceed as in Subsection~\ref{ssec:est_lam}, namely
by testing \eqref{eq2_app_eps}
by $\widecheck\varphi^\eps-\overline{\varphi_0}$.
Recalling that $\overline{\widecheck\varphi^\eps(t)}=
\overline{\varphi_0}\in(-1,1)$ for every $t\in[0,T]$,
$\widecheck\P$-almost surely, by the Poincar\'e-Wirtinger
inequality and assumption \ref{hyp:potential} we infer that,
for almost every $t\in(0,T)$,
\begin{align*}
\int_\OO |\nabla\widecheck\varphi^\eps(t)|^2 \: \d x
+\int_\OO \Psi'(\widecheck\varphi^\eps(t))
(\widecheck\varphi^\eps(t)-\overline{\varphi_0})  \: \d x
&=\int_\OO\left[\widecheck\mu^\eps(t)
-R'(\widecheck\varphi^\eps(t))\right]
(\widecheck\varphi^\eps(t)-\overline{\varphi_0})  \: \d x\\
&=\int_\OO\left[\widecheck\mu^\eps(t)-\overline{\widecheck\mu^\eps(t)}
-R'(\widecheck\varphi^\eps(t))\right]
(\widecheck\varphi^\eps(t)-\overline{\varphi_0})  \: \d x\\
&\leq C\left[
\norm{\nabla\widecheck\mu^\eps(t)}_\bH+
\norm{\widecheck\varphi^\eps(t)}_H\right]
\norm{\nabla\widecheck\varphi^\eps(t)}_\bH,
\end{align*}
where the constant $C$ is independent of 
$t$ and $\eps$.
By exploiting the classical inequality 
(see e.g.~\cite{MZ04})
\[
	\int_\OO\Psi'(\widecheck\varphi^\eps(t)) (\widecheck\varphi^\eps(t)) - \overline{\varphi_0})  \: \d x \geq C\|\Psi'(\widecheck\varphi^\eps(t))\|_{L^1(\OO)} - C
\]
together with \eqref{est1_eps}--\eqref{est3_eps}
we obtain
\begin{equation}
    \label{est4_eps}
    \norm{\Psi'(\widecheck\varphi^\eps)}_{
    L^p_\cP(\widecheck\Omega; L^2(0,T;L^1(\OO)))}\leq C_{p}.
\end{equation}
By comparison in \eqref{eq2_app_eps},
the estimates \eqref{est2_eps} and \eqref{est4_eps}
imply, thanks again to the 
Poincar\`e-Wirtinger
inequality, that 
\begin{equation}
    \label{est5_eps}
    \norm{\widecheck\mu^\eps}_{
    L^p_\cP(\widecheck\Omega; L^2(0,T;V_1)}\leq C_{p}.
\end{equation}
Testing \eqref{eq2_app_eps} by $-\Delta\widecheck\varphi^\eps + F'(\widecheck\varphi^\eps)$, 
we get that
\begin{align*}
&\norm{\Delta\widecheck\varphi^\eps}_{L^2(0,T; H)}^2+
\norm{F'(\widecheck\varphi^\eps)}_{L^2(0,T; H)}^2\\
&\hspace{2cm}\leq
\int_0^T\int_\OO\left[
|\Delta\widecheck\varphi^\eps(s)|^2
+2\Psi''(\widecheck\varphi^\eps(s))
|\nabla\widecheck\varphi^\eps(s)|^2+
|F'(\widecheck\varphi^\eps(s))|^2
\right]\,\d x\,\d s\\
&\hspace{2cm}=\int_0^T\int_\OO
\widecheck\mu^\eps(s)
(-\Delta\widecheck\varphi^\eps(s)+
F_\lambda'(\widecheck\varphi_\lambda^\eps(s)))
\,\d x\,\d s - 2\int_0^T\int_\OO
R''(\widecheck\varphi^\eps(s))|\nabla\widecheck\varphi^\eps(s)|^2
\,\d x\,\d s\\
&\hspace{2cm}\leq 
\frac12\norm{\Delta\widecheck\varphi^\eps}_{L^2(0,T; H)}^2+
\frac12\norm{F'(\widecheck\varphi^\eps)}_{L^2(0,T; H)}^2
+\norm{\widecheck\mu^\eps}_{L^2(0,T; H)}^2
+2C_R\norm{\widecheck\varphi^\eps}_{L^2(0,T; V_1)}^2,
\end{align*}
so that \eqref{est1_eps} and \eqref{est5_eps} yield
\begin{align}
    \label{est6_eps}
    \norm{\widecheck\varphi^\eps}_{
    L^p_\cP(\widecheck\Omega; L^2(0,T;V_2))}\leq C_{p},\\
    \label{est7_lam}
    \norm{F'(\widecheck\varphi^\eps)}_{
    L^p_\cP(\widecheck\Omega; L^2(0,T;H))}\leq C_{p}.
\end{align}
Finally, we adapt the computations of Subsubsection \ref{ssec:further_n}. At first, we observe that
\[
\|\widecheck \varphi^\eps\|_{L^p_\cP(\com;L^4(0,T;V_2))} \leq C_p,
\]
for all $p \in [2,q)$ by the very same computations. Then, we have
\[
\begin{split}
    \|\widecheck{G}^\eps(\widecheck \varphi^\eps(t))\|^2_{\cL^2(U,H)} & = \sum_{k \in \mathbb N} \|\nabla \widecheck \varphi^\eps(t) \cdot \widecheck{\b\sigma}^\eps_k(t)\|^2_H 
     \leq C \sum_{k \in \mathbb N} \|\nabla \widecheck \varphi^\eps(t) \|^2_{\b H}\|\widecheck{\b\sigma}_k(t)\|^2_{\W}
\end{split}
\]
and therefore, for $p \in [2,\, \min\{4,q\})$, we have
\[
\begin{split}
    \cE\left|\int_0^T\|\widecheck{G}^\eps(\widecheck \varphi^\eps(t))\|^p_{\cL^2(U,H)} \: \d t\right| & \leq C\cE\left|\int_0^T \|\nabla \widecheck \varphi^\eps(t) \|^p_{\b H} \left( \sum_{k \in \mathbb N} \|\widecheck{\b\sigma}_k(t)\|^2_{\W} \right)^\frac p2 \: \d t \right| \\
    & \leq C\cE\left[ \sup_{t \in [0,T]} \|\nabla \widecheck \varphi^\eps(t) \|^p_{\b H} \int_0^T  \|\widecheck{\b \Sigma}(t)\|^p_{\cL^2(U, \W)} \: \d t \right] \\
    & \leq C\cE\left[\sup_{t \in [0,T]} \|\nabla \widecheck \varphi^\eps(t) \|^{pr}_{\b H}\right]^\frac{1}{r}\cE\left[\left|\int_0^T  \|\widecheck{\b \Sigma}(t)\|^p_{\cL^2(U, \W)} \: \d t\right|^{\frac 1p \frac{pr}{r-1}}\right]^{p\frac{r-1}{pr}} \\
    & \leq C\|\widecheck \varphi^\eps\|_{L^{pr}_w(\com;L^\infty(0,T;V_1))}^p\|\widecheck{\b \Sigma}\|_{L^{\frac{pr}{r-1}}(\com;L^p(0,T;\cL^2(U,\W)))}^p
    \end{split}
\]
Indeed, the choice $p \leq 4$ ensures the second factor is well defined for all $r >1$, while $p < q$ ensures that there exists $r > 1$ such that $pr < q$, and thus the first factor is also well defined. The analogous higher-order estimate only holds in the space
\[
L^p(\com; L^2(0,T;\cL^2(U,V_1)))
\]
by the same token. Therefore, we conclude that for any $p \in [2,\,\min\{q,4\})$
\[
\|\widecheck G^{\eps}(\widecheck \varphi^\eps)\|_{L^p_\cP(\com;L^p(0,T;\cL^2(U,H))) \cap L^p_\cP(\com;L^2(0,T;\cL^2(U,V_1)))} \leq C_{p},
\]
and, still using \cite[Lemma 2.1]{fland-gat}, we get the bounds on the It\^{o} integrals
\[
\left\|\int_0^\cdot \widecheck G^\eps(\widecheck \varphi^\eps(s)) \: \d \widecheck W^\eps\right\|_{L^p_\cP(\com; W^{\alpha, p}(0,T;H))} \leq C_{p}
\]
for all $p \in [2,\,\min\{q,4\})$ and all $\alpha \in (0,\frac 12)$. As far as the It\^{o}--Stratonovich corrector is concerned, observe that
\[
\begin{split}
    \left| \langle \cLe(\widecheck \varphi^\eps(t)), \psi\rangle_{V_1^*, V_1} \right| & = \left| \sum_{k \in \mathbb N} \int_\OO [\widecheck {\b \sigma}^\eps_k \cdot \nabla  \widecheck \varphi^\eps(t)][\widecheck {\b \sigma}^\eps_k \cdot \nabla  \psi] \: \d x\right| 
     \leq \|\widecheck \varphi^\eps(t)\|_{V_1}\|\widecheck{\b \Sigma}(t)\|^2_{\cL^2(U,\W)}
\end{split}
\]
whenever $\|\psi\|_{V_1} \leq 1$. This implies
\[
\begin{split}
    \cE \left| \int_0^T \|\cLe(\widecheck \varphi^\eps(t))\|_{V_1^*}^2 \: \d t\right|^\frac p2 &  \leq \cE\left[ \sup_{t \in [0,T]} \|\widecheck \varphi^\eps(t)\|_{V_1}^p \left|\int_0^T \|\widecheck{\b \Sigma}(t)\|^4_{\cL^2(U,\W)}\right|^\frac p2\right] \\
    & \leq C\cE\left[ \sup_{t \in [0,T]} \|\widecheck \varphi^\eps(t)\|_{V_1}^{pr} \right]^\frac 1r \cE \left[ \left|\int_0^T \|\widecheck{\b \Sigma}(t)\|^4_{\cL^2(U,\W)}\right|^{\frac p2\frac{r}{r-1}}\right]^\frac{r-1}{r}
\end{split}
\]
and therefore
\[
\|\cLe(\widecheck\varphi^\eps)\|_{L^p_\cP(\com;L^2(0,T; V_1^*))} \leq C_{p}
\]
for any $p \in [2, q)$. Finally, by comparison we also deduce
\[
\|\widecheck \varphi^\eps\|_{L^p_\cP(\com;W^{\alpha, p}(0,T;V_1^*))} \leq C_{p}
\]
for all $\alpha \in (0,\frac 12)$ and $p \in [2, \min\{4, q\})$.

\subsection{Passage to the limit as $\boldsymbol{\eps \to 0^+}$}
\label{ssec:limit_eps}
A martingale solution to problem \eqref{eq:ch_ito} is finally retrieved by a final compactness argument. Once again, the argument is similar to the ones of Subsections \ref{ssec:limit_as_n} and \ref{ssec:limit_lam}, and therefore we shall omit some details, for the sake of brevity. Without relabeling, any passage to the limit as $\eps \to 0^+$ is performed along an arbitrary vanishing sequence $\{\eps_k\}_{k \in \enne} \subset (0,1)$.  For this subsection, we define $\ell = \min\{q, 4\}.$

\subsubsection{Tightness of the laws of regularized solutions}
The analogue of the previous tightness lemmas is given hereafter. Although the path space is different, the proof is entirely analogous (compare with Lemmas \ref{lem:tight} and \ref{lem:tight_lam}) and therefore omitted.
\begin{lem} \label{lem:tight_eps}
    The family of laws of 
    \[
    \{\widecheck\varphi^\eps,\, \widecheck G^\eps(\widecheck\varphi^\eps) \cdot \widecheck W^\eps,\, \widecheck W^\eps,\, \widecheck{\b \Sigma}^\eps\}_{\eps \in (0,1)}
    \]
    is tight in the space
    \[
    \Xi \times C^0([0,T]; V^*_1) \times C^0([0,T]; U_0) \times L^2(0,T;\cL^2(U,\b H)))
    \]
    where $\Xi = C^0([0,T];H) \cap L^2(0,T;V_1)$.
\end{lem}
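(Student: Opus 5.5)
We prove tightness separately for each of the four components. Tightness of the product family then follows, since a product of tight families is tight on the product space. Throughout, we combine the uniform-in-$\eps$ bounds of Subsection~\ref{ssec:est_eps} with compact embeddings and the Markov inequality, exactly as in Lemma~\ref{lem:tight}. The exponent is $\ell=\min\{q,4\}$, and we work with some fixed $p\in(2,\ell)$.

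\emph{The Wiener processes.} Since $\widecheck W^\eps$ is a cylindrical Wiener process for every $\eps$, all laws coincide with a single Radon measure on the Polish space $C^0([0,T];U_0)$. A single measure is tight, so this component is immediate.

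\emph{The order parameters.} We use the $\eps$-independent bounds
\[
\widecheck\varphi^\eps \text{ bounded in } L^p_w(\com;L^\infty(0,T;V_1))\cap L^p_\cP(\com;L^2(0,T;V_2))\cap L^p_\cP(\com;W^{\alpha,p}(0,T;V_1^*)).
\]
We choose $\alpha\in(0,\frac12)$ with $\alpha p>1$; this is possible because $p>2$. By \cite[Corollary 5]{simon}, the embeddings
\[
L^\infty(0,T;V_1)\cap W^{\alpha,p}(0,T;V_1^*)\embed C^0([0,T];H)
\quad\text{and}\quad
L^2(0,T;V_2)\cap W^{\alpha,p}(0,T;V_1^*)\embed L^2(0,T;V_1)
\]
are compact. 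Let $B_R$ denote the ball of radius $R$ in the intersection space. It is compact in $\Xi$, and the Markov inequality gives $\cp(\widecheck\varphi^\eps\notin B_R)\le C_p R^{-p}$ uniformly in $\eps$.

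\emph{The stochastic integrals.} This is the main point where the argument departs from the earlier lemmas. The integrals $\widecheck I^\eps$ are only bounded in $L^p_\cP(\com;W^{\alpha,p}(0,T;H))$; the $W^{\beta,4}(0,T;V_1)$ bound available at the earlier levels is lost. The target space is therefore $C^0([0,T];V_1^*)$ instead of $C^0([0,T];H)$. We use that $W^{\alpha,p}(0,T;H)\embed C^0([0,T];V_1^*)$ is compact whenever $\alpha p>1$. This follows from the Arzel\`a--Ascoli theorem (equivalently, \cite[Corollary 5]{simon}): functions bounded in this space are uniformly H\"older in time with values in $H$, and $H\embed V_1^*$ is compact. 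Markov's inequality then concludes as before. The one delicate check is that the $W^{\alpha,p}$ bound from \cite[Lemma 2.1]{fland-gat} holds with $p>2$. This requires $\widecheck G^\eps(\widecheck\varphi^\eps)$ to be bounded in $L^p(\com;L^p(0,T;\cL^2(U,H)))$, which was established above for $p<\min\{q,4\}$; since $q>2$, this range contains exponents $p>2$.

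\emph{The transport fields.} We have $\widecheck{\b\Sigma}^\eps\laweq\b\Sigma^\eps$, and $\norm{\b\sigma^\eps_k}_{\W}\le\norm{\b\sigma_k}_{\W}$ by property (iii) of the regularization. Hence, by \ref{hyp:noise}, $\widecheck{\b\Sigma}^\eps$ is bounded in $L^q(\com;L^4(0,T;\cL^2(U,\W)))$ uniformly in $\eps$. We cannot simply invoke compactness of $\W\embed\bH$ at the level of $\cL^2(U,\cdot)$, because Hilbert--Schmidt balls are not compact in infinite dimensions. The first approach we would try is the one used at the $n$-level, which already runs into this obstacle. The simplest fix uses that $\b\Sigma^\eps\to\b\Sigma$ in $L^q(\Omega;L^2(0,T;\cL^2(U,\bH)))$ by property (ii). This is convergence in probability on the original space, so the laws of $\b\Sigma^\eps$ converge weakly to the law of $\b\Sigma$ in the Polish space $L^2(0,T;\cL^2(U,\bH))$. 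A weakly convergent sequence of probability measures on a Polish space is tight (Prokhorov's theorem, converse direction). Since $\widecheck{\b\Sigma}^\eps$ has the same laws, the family is tight.

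Combining the four marginal tightness results yields the claim.
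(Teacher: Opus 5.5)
Your proposal is correct. For the Wiener processes, the order parameters and the stochastic integrals it is exactly the route the paper intends when it calls the proof ``entirely analogous'' to Lemma~\ref{lem:tight}. The switch of path space to $C^0([0,T];V_1^*)$ for the integrals is forced, as you say, by the loss of the $W^{\beta,4}(0,T;V_1)$ bound. It is handled by the compact embedding $W^{\alpha,p}(0,T;H)\embed C^0([0,T];V_1^*)$ with $\alpha p>1$ and $p<\min\{q,4\}$.

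You genuinely depart from the paper only for the transport fields. The template of Lemma~\ref{lem:tight} bounds $\b\Sigma$ in $\cL^2(U,\b V_1)$ and invokes compactness of $\b V_1\embed\bH$. You are right that this is not enough, for two reasons:
\begin{itemize}
\item $\cL^2(U,\b V_1)\embed\cL^2(U,\bH)$ is not compact for infinite-dimensional $U$ (take $T_k:=(\cdot,u_k)_U\,\b v$ with a fixed $\b v\neq0$);
\item the bound gives no compactness in time.
\end{itemize}
The $\lambda$-level shortcut (laws independent of the parameter) is also unavailable here, since $\widecheck{\b\Sigma}^\eps\laweq\b\Sigma^\eps$ depends on $\eps$.

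Your substitute is rigorous and needs no time regularity of $\b\Sigma$: convergence in probability from property (ii) gives weak convergence of laws on the Polish space $L^2(0,T;\cL^2(U,\bH))$, hence tightness by the converse of Prokhorov's theorem. The same device also repairs the corresponding step of Lemma~\ref{lem:tight}, using the pathwise convergence $\b P_n\b\Sigma^\eps\to\b\Sigma^\eps$ in $L^2(0,T;\cL^2(U,\bH))$ by dominated convergence.

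Compared with a bound-plus-compact-embedding argument, what it gives up is uniformity over the whole uncountable family. It yields tightness along every sequence $\eps_k\to0^+$, which is all the paper uses, since it passes to the limit along arbitrary vanishing sequences. Tightness for all $\eps\in(0,1)$, as literally stated, would need something like continuity in probability of $\eps\mapsto\b\Sigma^\eps$, which (i)--(iii) do not guarantee.
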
 \noindent
The Prokhorov and Skorokhod theorems entail then that there exists a probability space $(\hom,\, \hF,\, \hP)$ and a sequence of random variables $\{Z^\eps :(\hom, \hF) \to (\com, \cf)\}_{\eps \in (0,1)}$ such that the law of $Z^\eps$ equals $\cp$ for all $\eps \in (0,1)$ and, up to subsequences, all of the convergences 
\begin{align*}
    \widehat{\varphi}^\eps := \widecheck \varphi^\eps \circ Z^\eps \to \widehat{\varphi} & \qquad \text{in }L^p(\com; C^0([0,T];H) \cap L^2(0,T;V_1)); \\
    \widehat{\varphi}^\eps \overset{*}{\rightharpoonup} \widehat{\varphi} & \qquad \text{in }L^{r}_w(\com; L^\infty(0,T;V_1)) \cap L^{p}(\tom; W^{\alpha, p}(0,T;V_1^*)); \\
    \widehat{\varphi}^\eps \rightharpoonup \widehat{\varphi} & \qquad \text{in }L^{r}(\com; L^4(0,T;V_2)); \\
    \widehat{\mu}^\eps := \widecheck {\mu}^\eps \circ Z^\eps \rightharpoonup\widehat{\mu} & \qquad \text{in }L^{r}(\com; L^2(0,T;V_1)); \\
    \widehat{I}^\eps := (\widecheck G^\eps(\widecheck \varphi^\eps) \cdot \widecheck W^\eps) \circ Z^\eps  \to \widehat{I} & \qquad \text{in }L^p(\com; C^0([0,T];V^*)); \\
    \widehat{W}^\eps := \widecheck W^\eps \circ Z^\eps \to \widehat W & \qquad \text{in }L^p(\com;C^0([0,T];U_0)); \\
    \widehat{\b \Sigma}^\eps := \widecheck{\b \Sigma}^\eps \circ Z^\eps \to \widehat{\b \Sigma} & \qquad \text{in }L^r(\com; L^2(0,T;\cL^2(U, \b H))),
\end{align*}
hold in the limit $\eps \to 0^+$ for all $p < \ell$ and $r < q$ and for some limiting processes satisfying
\begin{align*}
    \widehat{\varphi} & \in L^r_\cP(\com; C^0([0,T];H)) \cap L^r_w(\tom; L^\infty(0,T;V_1)) \cap L^r_\cP(\com; L^4(0,T;V_2)); \\
    \widehat{\mu} & \in L^r_\cP(\com; L^2(0,T;V_1)); \\
    \widehat{I} & \in L^p_\cP(\com; C^0([0,T]; H)); \\
    \widehat{W} & \in L^r_\cP(\com; C^0([0,T];U));\\
    \widehat{\b \Sigma} & \in L^q_\cP(\com; L^2(0,T; \cL^2(U,H)))
\end{align*}
for all $p < \ell$ and $r < q$.

\subsubsection{Identification of the limit solution}
The identification of the limit solution follows by the same arguments of Subsubsection \ref{ssec:id_lam}. The procedure yields a martingale solution
\[
\left(\widehat{\Omega},\,
\widehat{\cF}, \,
(\widehat{\cF}_{t})_{t\in[0,T]},\,
\widehat{\P}, \,
\widehat{W}, \,
\widehat{\b\Sigma},\,
\widehat{\varphi}
\right)
\]
of \eqref{eq:ch}, in the sense of Definition \ref{def:sol-nd}.

\subsection{Uniqueness with constant mobility}
\label{ssec:uniq}
Finally, we address the problem of proving uniqueness of martingale solutions for problem \eqref{eq:ch}. As customary, as a consequence of the Yamada--Watanabe theorem (see \cite[Theorem E.0.8]{LiuRo}), this yields existence and uniqueness of probabilistically-strong solutions, in the sense of Definition \ref{def:sol-nd}-\ref{def:strong-nd}. For the remainder of this subsection, we shall assume that the mobility function $m$ is constant and positive, i.e., that
\[
m(r) = m > 0 \qquad \forall \: r \in \mathbb R.
\]
Let $\varphi_{01}$ and $\varphi_{02}$ comply with Definition \ref{def:sol-nd}, i.e., assume that \eqref{init} holds, and be such that $\overline{\varphi_{01}} = \overline{\varphi_{02}}$. Let $(\hom, \hF, (\hF_t)_t, \hP)$ be a filtered probability space, let $\widehat W$ be an $(\hF_t)_t$-adapted cylidrical Wiener process and assume that, for $i \in \{1,2\}$, the family
\[
\left( (\hom,\, \hF,\, (\hF_t)_t, \, \hP),\,\widehat W, \widehat{\b{ \Sigma}}_i,\, \hphi_i \right)
\]
is a martingale solution of \eqref{eq:ch}, where $\widehat{\b \Sigma}_1$ and $\widehat{\b \Sigma}_2$ are two transport fields complying with Assumption \ref{hyp:noise}. This amounts to say that, formally,
\[
    \begin{cases}
        \d \hphi_i - m\Delta\hmu_i \,\d t 
        +\kappa\hL^i\hphi_i\,\d t
        = \nabla\hphi_i\cdot\widehat{\b{\Sigma}}_i\,\d \widehat W
        \quad&\text{in } (0,T)\times\OO, \\
        \hmu_i = -\Delta \hphi_i + F'(\hphi_i) \quad&\text{in }(0,T)\times\OO, \\
        \partial_{\b n}\hphi_i=\partial_{\b n}\hmu_i=0
        \quad&\text{on }(0,T)\times\partial\OO, \\
        \hphi_i(\cdot \:, 0) = \varphi_{0i} \quad&\text{in } \OO,
    \end{cases} \qquad i \in \{1,2\},
\]
where we regard the deterministic initial conditions as degenerate random variables on $\hom$. Here, setting
\[
\widehat {\b \Sigma}_iu_k = \widehat{\b \sigma}_k^{i}
\]
for $i \in \{1,2\}$, the It\^{o}--Stratonovich correctors are defined as maps $\hL^i : \hom \times [0,T] \times V_1 \to V_1^*$ and are such that
\[
\langle \hL^i(\omega, t, \psi_1), \psi_2 \rangle_{V_1^*, V_1} = \dfrac{1}{2} \sum_{k \in \mathbb N} \int_\OO [\widehat{\b \sigma}^i_k(\omega, t, x) \cdot \nabla \psi_1(x)][\widehat{\b \sigma}^i_k(\omega, t, x) \cdot \nabla \psi_2(x)] \: \d x
\]
for $i \in \{1,2\}$ and all $\omega \in \hom$, $t \in [0,T]$ and $\psi_1,\,\psi_2 \in V_1$. Introducing the difference variables 
\[
\hphi := \hphi_1-\hphi_2, \qquad \hmu:= \hmu_1-\hmu_2, \qquad \hphi_0 = \varphi_{01}-\varphi_{02}, \qquad \widehat {\b \Sigma }:= \widehat {\b \Sigma}_1 - \widehat {\b \Sigma}_2,
\]
and setting
\[
\widehat {\b \Sigma}u_k := \widehat{\b \sigma}_k = \widehat{\b \sigma}_k^{1} - \widehat{\b \sigma}_k^{2},
\]
for all $k \in \mathbb N$, after minor algebraic manipulations we deduce the stochastic differential system
\[
\begin{cases}
        \d \hphi - m\Delta\hmu \,\d t 
        +\kappa\hL^1\hphi\,\d t +\kappa[\hL^1-\hL^2]\hphi_2\,\d t
        = \nabla\hphi\cdot\widehat{\b{\Sigma}}_1+\nabla \hphi_2\cdot \widehat{\b \Sigma} \,\d \widehat W
        \quad&\text{in } (0,T)\times\OO, \\
        \hmu = -\Delta \hphi + F'(\hphi_1) - F'(\hphi_2) \quad&\text{in }(0,T)\times\OO, \\
        \partial_{\b n}\hphi=\partial_{\b n}\hmu=0
        \quad&\text{on }(0,T)\times\partial\OO, \\
        \hphi(\cdot \:, 0) = \hphi_{0} \quad&\text{in } \OO.
    \end{cases}
\]

\subsubsection{First estimate}
Given the pathwise conservative structure of the system, it is straightforward to observe that
\[
\overline{\hphi(t)} = \overline{\hphi_0} = 0, 
\]
for all $t \geq 0$ and $\hP$-almost surely. Therefore, we may apply the It\^{o} lemma to the functional
\[
v \mapsto \dfrac 12 \|\nabla \mathcal Nv\|^2_{\b H},
\]
which is well defined for every $v \in V_{1,0}$, evaluated at $v = \hphi(\t)$. This results in 
\begin{multline} \label{eq:unique1}
    \dfrac 12 \|\nabla \mathcal N\hphi(\t)\|^2_{\b H} + m\int_0^{\t} \int_\OO \hmu(s)\hphi(s)\: \d x\, \d s  +\kappa\int_0^{\t} \left\langle \hL^1 \hphi + [\hL^1-\hL^2]\hphi_2, \,\mathcal N \hphi(s)\right\rangle_{V_{1,0}^*, V_{1,0}} \: \d s \\ = \dfrac 12 \|\nabla \mathcal N\hphi_0\|^2_{\b H} + \dfrac{1}{2}\int_0^{\t} \sum_{k=0}^{+\infty} \|\nabla \mathcal N [\nabla \hphi(s) \cdot \widehat{\b \sigma}_k^1(s) + \nabla \hphi_2(s) \cdot \widehat{\b \sigma}_k(s)]\|^2_{\b H} \; \d s \\ + \int_0^{\t} \sum_{k=0}^{+\infty} (\mathcal N\hphi(s), [\nabla \hphi(s) \cdot \widehat{\b \sigma}_k^1(s) + \nabla \hphi_2(s) \cdot \widehat{\b \sigma}_k(s)] )_H\: \d \widehat W(s).
\end{multline}
Let us now address each term in \eqref{eq:unique1} separately. In the following, the parameter $\delta > 0$ can be chosen arbitrarily small. On the left hand side, employing the mean value theorem and Assumption \ref{hyp:potential} yields
\begin{multline} \label{eq:unique11}
    m\int_0^{\t} \int_\OO \hmu(s)\hphi(s)\: \d x\, \d s= m\int_0^{\t} \|\nabla \hphi(s)\|^2_{\b H} \: \d s +\int_0^{\t} \int_\OO [F'(\hphi_1(s))-F'(\hphi_2(s))]\hphi(s) \: \d x \, \d s\\
    \geq m\int_0^{\t} \|\nabla \hphi(s)\|^2_{\b H} \: \d s + \int_0^{\t}\int_\OO \int_0^1 R''(\theta\hphi_2(s)+(1-\theta)\hphi_1(s))\hphi^2(s) \: \d\theta\, \d x\,\d s,
\end{multline}
while by interpolation we have
\begin{multline} \label{eq:unique12}
         \left| \int_0^{\t} \left\langle \hL^1 \hphi, \,\mathcal N \hphi(s)\right\rangle_{V_{1,0}^*, V_{1,0}} \: \d s \right| \leq C_\infty^2\left|\int_0^{\t} \|\widehat{\b \Sigma}_1(s)\|^2_{\cL^2(U, \W)}\|\nabla\hphi(s)\|_{\b H}\|\nabla\mathcal N \hphi(s)\|_{\b H} \: \d s\right| \\
         \leq \delta \int_0^{\t} \|\nabla\hphi(s)\|^2_{\b H} \: \d s + C_\delta \int_0^{\t} \|\widehat{\b \Sigma}_1(s)\|^4_{\cL^2(U, \W)}\|\nabla\mathcal N \hphi(s)\|_{\b H}^2 \: \d s
\end{multline}
and, analogously
\begin{equation} \label{eq:unique13}
    \begin{split}
        & \luca{\kappa}\left| \int_0^{\t} \left\langle [\hL^1 -\hL^2]\hphi_2, \,\mathcal N \hphi(s)\right\rangle_{V_{1,0}^*, V_{1,0}} \: \d s \right| \\
        & \hspace{1cm}\leq \kappa C_\infty^2\left|\int_0^{\t} \|\widehat{\b \Sigma}_1(s)\|_{\cL^2(U, \W)}\|\widehat{\b \Sigma}(s)\|_{\cL^2(U, \W)}\|\nabla\hphi_2(s)\|_{\b H}\|\nabla\mathcal N \hphi(s)\|_{\b H} \: \d s\right| \\
        & \hspace{3cm}+ \kappa C_\infty^2\left|\int_0^{\t} \|\widehat{\b \Sigma}_2(s)\|_{\cL^2(U, \W)}\|\widehat{\b \Sigma}(s)\|_{\cL^2(U, \W)}\|\nabla\hphi_2(s)\|_{\b H}\|\nabla\mathcal N \hphi(s)\|_{\b H} \: \d s\right| \\
        & \hspace{1cm} \leq \dfrac 12\int_0^{\t} \|\nabla \mathcal N \hphi(s)\|_{\cL^2(U, \W)}^2 \: \d s \\
        & \hspace{3cm}+ \luca{C\kappa} \int_0^{\t} \left[\|\widehat{\b \Sigma}_1(s)\|^2_{\cL^2(U, \W)}+ \|\widehat{\b \Sigma}_2(s)\|^2_{\cL^2(U, \W)} \right]\|\nabla\hphi_2(s)\|^2_{\b H}\|\widehat{\b \Sigma}(s)\|^2_{\W} \: \d s.
    \end{split}
\end{equation}
Next, the It\^{o} trace term is controlled by
\begin{equation} \label{eq:unique14}
\begin{split}
     & \dfrac{1}{2}\int_0^{\t} \sum_{k=0}^{+\infty} \|\nabla \mathcal N [\nabla \hphi(s) \cdot \widehat{\b \sigma}_k^1(s) + \nabla \hphi_2(s) \cdot \widehat{\b \sigma}_k(s)]\|^2_{\b H} \; \d s \\
     & \hspace{0.4cm} \leq \int_0^{\t} 
     \left[\|\hphi(s)\|^2_H\|\widehat{\b \Sigma}_1(s)\|_{\cL^2(U, \W)}^2+\luca{
     \|\hphi_2(s)\|^2_{L^\infty(\OO)}
     \|\widehat{\b \Sigma}(s)\|_{\cL^2(U, \bH)}^2}\right] \: \d s \\
     & \hspace{0.4cm} \leq \delta \int_0^{\t} \|\nabla\hphi(s)\|^2_{\b H} \: \d s + C_\delta\int_0^{\t} \|\widehat{\b \Sigma}_1(s)\|_{\cL^2(U, \W)}^4\|\nabla \mathcal N  \hphi(s)\|^2_\b H+
     \luca{\int_0^{\t}\|\widehat{\b \Sigma}(s)\|_{\cL^2(U, \bH)}^2 \: \d s}.
\end{split}
\end{equation}
By choosing $\delta$ sufficiently small, collecting \eqref{eq:unique11}-\eqref{eq:unique14} in \eqref{eq:unique1} at multiplying the result by 2 yields
\begin{multline} \label{eq:unique2}
    \|\nabla \mathcal N\hphi(\t)\|^2_{\b H} + m\int_0^{\t} \|\nabla\hphi(s)\|^2_{\b H} \: \d s   \leq \|\nabla \mathcal N\hphi_0\|^2_{\b H} + \int_0^{\t} \widehat{\mathcal Y}(s)\|\nabla\mathcal N \hphi(s)\|^2_{\b H } \: \d s + \int_0^{\t} \widehat{\mathcal Z}(s) \: \d s\\ + 2\int_0^{\t} \sum_{k=0}^{+\infty} (\mathcal N\hphi(s), [\nabla \hphi(s) \cdot \widehat{\b \sigma}_k^1(s) + \nabla \hphi_2(s) \cdot \widehat{\b \sigma}_k(s)] )_H\: \d \widehat W(s),
\end{multline}
with the processes $\widehat{\mathcal Y}: \hom \times [0,T] \to \mathbb R$ and $\widehat{\mathcal Z}: \hom \times [0,T] \to \mathbb R$ defined by
\begin{align*}
    \widehat{\mathcal Y}(s) & = C\left[1+\|\widehat{\b \Sigma}_1(s)\|^4_{\cL^2(U, \W)}\right], \\
    \widehat{\mathcal Z} (s) & = 
    \luca{
    C\kappa \left[ 1+ \|\widehat{\b \Sigma}_1(s)\|^2_{\cL^2(U, \W)}+ \|\widehat{\b \Sigma}_2(s)\|^2_{\cL^2(U, \W)} \right]\|\nabla\hphi_2(s)\|^2_{\b H}\|\widehat{\b \Sigma}(s)\|_{\cL^2(U, \W)}^2+
    C\|\widehat{\b \Sigma}(s)\|_{\cL^2(U, \bH)}^2},
\end{align*}
for a suitably large constant $C > 0$. 

\subsubsection{A weighted estimate}
Since $\|\widehat {\b \Sigma}_1\|_{L^4(0,T;\cL^2(U,\W))}$ possesses exponential moments of any order by Assumption \ref{hyp:noise}, it is possible to derive a continuous dependence estimate through a weighted It\^{o} formula. Indeed, for a fixed $\lambda > 0$ to be specified later, define
\[
\Lambda: [0,T] \to (0,1]   \qquad \Lambda(t) = \exp\left( -\lambda\int_0^t\widehat{\mathcal Y}(s) \: \d s\right).
\]
Applying the It\^{o} formula to the functional
\[
v \mapsto \Lambda(t)\|\nabla \mathcal Nv\|^2_{\b H},
\]
%still evaluated at $v = \hphi(\t)$, 
%we obtain by the product rule
%\begin{multline} \label{eq:unique3}
%    \Lambda(t)\|\nabla \mathcal N\hphi(\t)\|^2_{\b H} + 2\int_0^{\t} \int_\OO \Lambda(s)\hmu(s)\hphi(s)\: \d x\, \d s  +\lambda\int_0^t\Lambda(s)\widehat{\mathcal{Y}}(s)\|\nabla \mathcal N\hphi(s)\|^2_{\b H} \: \d s\\ +2\kappa\int_0^{\t} \Lambda(s)\left\langle \hL^1 \hphi + [\hL^1-\hL^2]\hphi_2, \,\mathcal N \hphi(s)\right\rangle_{V_{1,0}^*, V_{1,0}} \: \d s \\ = \|\nabla \mathcal N\hphi_0\|^2_{\b H} + \int_0^{\t} \Lambda(s)\sum_{k=0}^{+\infty} \|\nabla \mathcal N [\nabla \hphi(s) \cdot \widehat{\b \sigma}_k^1(s) + \nabla \hphi_2(s) \cdot \widehat{\b \sigma}_k(s)]\|^2_{\b H} \; \d s \\ + 2\int_0^{\t} \Lambda(s) \sum_{k=0}^{+\infty} (\mathcal N\hphi(s), [\nabla \hphi(s) \cdot \widehat{\b \sigma}_k^1(s) + \nabla \hphi_2(s) \cdot \widehat{\b \sigma}_k(s)] )_H\: \d \widehat W(s).
%\end{multline}
%Eventually,
by iterating the computations in \eqref{eq:unique11}-\eqref{eq:unique14} and %starting from \eqref{eq:unique3}, 
by exploiting also the fact that $|\Lambda| \leq 1$ in the deterministic integrals, it is straightforward to obtain a weighted version of \eqref{eq:unique2} with an additional dissipation term, namely
\begin{multline} \label{eq:unique4}
    \Lambda(t)\|\nabla \mathcal N\hphi(\t)\|^2_{\b H} + m\int_0^{\t} \Lambda(s)\|\nabla\hphi(s)\|^2_{\b H} \: \d s  + (\lambda-1)\int_0^t\Lambda(s)\widehat{\mathcal{Y}}(s)\|\nabla \mathcal N\hphi(s)\|^2_{\b H} \: \d s \\ \leq \|\nabla \mathcal N\hphi_0\|^2_{\b H} + \int_0^{\t} \widehat{\mathcal Z}(s) \: \d s + 2\int_0^{\t} \Lambda(s) \sum_{k=0}^{+\infty} (\mathcal N\hphi(s), [\nabla \hphi(s) \cdot \widehat{\b \sigma}_k^1(s) + \nabla \hphi_2(s) \cdot \widehat{\b \sigma}_k(s)] )_H\: \d \widehat W(s),
\end{multline}
For any $p \in [2,q)$, raising \eqref{eq:unique4} to the $\frac p2$-power, taking supremums in time and $\hP$-expectations yields
\begin{multline}\label{eq:unique41}
    \hE \supp \Lambda^\frac p2(\tau)\|\nabla \mathcal N\hphi(\tau)\|^p_{\b H} + m^\frac p2\hE\left|\int_0^{\t} \Lambda(s)\|\nabla\hphi(s)\|^2_{\b H} \: \d s\right|^\frac p2 + (\lambda-1)^\frac  p2\hE\left|\int_0^t\Lambda(s)\widehat{\mathcal{Y}}(s)\|\nabla \mathcal N\hphi(s)\|^2_{\b H} \: \d s\right|^\frac p2  \\ \leq C\Bigg[ \|\nabla \mathcal N\hphi_0\|^p_{\b H} + \hE\left| \int_0^t \widehat{\mathcal Z}(s) \: \d s\right|^\frac p2 \\ + \hE \supp \left|  \int_0^{\tau} \Lambda(s) \sum_{k=0}^{+\infty} (\mathcal N\hphi(s), [\nabla \hphi(s) \cdot \widehat{\b \sigma}_k^1(s) + \nabla \hphi_2(s) \cdot \widehat{\b \sigma}_k(s)] )_H\: \d \widehat W(s)\right|^\frac p2\Bigg]
\end{multline}
Finally, we are left to handle the stochastic integral. The Burkholder--Davis--Gundy inequality yields, after an integration by parts,
\begin{equation} \label{eq:unique42}
\begin{split}
    & \hE \supp \left| \int_0^{\tau} \Lambda(s)\sum_{k=0}^{+\infty} (\mathcal N\hphi(s), [\nabla \hphi(s) \cdot \widehat{\b \sigma}_k^1(s) + \nabla \hphi_2(s) \cdot \widehat{\b \sigma}_k(s)] )_H\: \d \widehat W(s)\right|^\frac p2 \\
    & \hspace{0.3cm} \leq C\hE \left|  \int_0^{t} \Lambda^2(s)\|\nabla\mathcal N\hphi(s)\|^2_{\b H} \| \hphi(s) \widehat{\b \Sigma}_1(s) + \hphi_2(s)  \widehat{\b \Sigma}(s)\|_{\cL^2(U, \b H)}^2 \: \d s\right|^\frac p4 \\
    & \hspace{0.3cm} \leq C\hE \left[  \supp \Lambda^\frac p4(\tau)\|\nabla\mathcal N\hphi(\tau)\|^\frac p2_{\b H}\left|\int_0^{t} \Lambda(s)\left[ \| \hphi(s)\|^2_H \|\widehat{\b \Sigma}_1(s)\|_{\cL^2(U,\W)}^2 + \luca{\|\widehat{\b \Sigma}(s)\|_{\cL^2(U, \bH)}^2}\right] \d s\right|^\frac p4 \right]\\
    & \hspace{0.3cm} \leq \dfrac{1}{2} \hE \supp \Lambda^\frac p2(\tau)\|\nabla\mathcal N\hphi(\tau)\|^p_{\b H} + \frac m2 \hE\left|\int_0^{\t} \Lambda(s)\|\nabla\hphi(s)\|^2_{\b H} \right|^\frac p2 \\
    & \hspace{3cm} + C \hE\left|\int_0^{t} \Lambda(s) \| \nabla \mathcal N \hphi(s)\|^2_H \|\widehat{\b \Sigma}_1(s)\|_{\cL^2(U,\W)}^4 + 
    \luca{\|\widehat{\b \Sigma}(s)\|_{\cL^2(U, \bH)}^2} \: \d s\right|^\frac p2 \\
    & \hspace{0.3cm} \leq \dfrac{1}{2} \hE \supp \Lambda^\frac p2(\tau)\|\nabla\mathcal N\hphi(\tau)\|^p_{\b H} + \frac{m^\frac p2}{2} \hE\left|\int_0^{\t} \Lambda(s)\|\nabla\hphi(s)\|^2_{\b H} \right|^\frac p2 \\
    & \hspace{3cm} + C \hE\left|\int_0^{t} \Lambda(s) \widehat{\mathcal Y}(s) \| \nabla \mathcal N \hphi(s)\|^2_H  \: \d s \right|^\frac p2 +
    C \hE \left| \int_0^t\luca{ \|\widehat{\b \Sigma}(s)\|_{\cL^2(U, \bH)}^2} \: \d s\right|^\frac p2.
\end{split}
\end{equation}
Upon choosing $\lambda > 0$ sufficiently large, in particular so that $(\lambda -1)^\frac p2> C$ with $C >0$ the constant appearing in the last line of \eqref{eq:unique42}, and suitably redefining the process $\widehat{\mathcal Z}$, it follows from \eqref{eq:unique41} and \eqref{eq:unique42} that
\begin{multline} \label{eq:unique43}
    \hE \supp \Lambda^\frac p2(\tau)\|\nabla \mathcal N\hphi(\tau)\|^p_{\b H} + \hE\left|\int_0^{\t} \Lambda(s)\|\nabla\hphi(s)\|^2_{\b H} \: \d s\right|^\frac p2 + \hE\left|\int_0^t\Lambda(s)\widehat{\mathcal{Y}}(s)\|\nabla \mathcal N\hphi(s)\|^2_{\b H} \: \d s\right|^\frac p2  \\ \leq C\left[ \|\nabla \mathcal N\hphi_0\|^p_{\b H} + \hE\left| \int_0^t \widehat{\mathcal Z}(s) \: \d s\right|^\frac p2 \right],
\end{multline}
for all $p < q$, 
%where
%\[
%\widehat{\mathcal Z} (s) = C\left[ 1+ \left(1+ \|\widehat{\b \Sigma}_1(s)\|^2_{\cL^2(U, \W)}+ \|\widehat{\b \Sigma}_2(s)\|^2_{\cL^2(U, \W)} \right)\|\nabla\hphi_2(s)\|^2_{\b H} \right]\|\widehat{\b \Sigma}\|_{\cL^2(U, \W)}^2
%\]
for a possibly different constant $C > 0$. Observe that $\|\widehat{\mathcal Z}\|_{L^1(0,T)}$ is a well defined random variable in $L^r(\hom)$ for all $r < \frac q2$, thanks to Assumption \ref{hyp:noise}.

\subsubsection{Conclusion of the argument}
In order to conclude the proof, we are left with removing the weight introduced in the previous computations. Let $1<r<q$ and $s > 1$ be such that 
\[
\dfrac{rs}{s-1} < q.
\]
By monotonicity of the exponential function and the H\"{o}lder inequality, as well as \eqref{eq:unique43} with $p = \frac{rs}{s-1}$,
\[
\begin{split}
    \hE \supp \|\nabla \mathcal N\hphi(\tau)\|^r_{\b H} & = \hE \sup_{t \in [0,T]} \Lambda^\frac r2(t)\Lambda^{-\frac r2}(t)\|\nabla \mathcal N\hphi(t)\|^r_{\b H} \\
    & \leq \hE \left[ \Lambda^{-\frac r2}(T) \sup_{t \in [0,T]} \Lambda^\frac r2(t)\|\nabla \mathcal N\hphi(t)\|^r_{\b H} \right] \\
    & \leq C \hE\left[ \Lambda^{-\frac {rs}2}(T) \right]^\frac{1}{s} \hE\left[ \sup_{t \in [0,T]} \Lambda^{\frac{rs}{2(s-1)}}(t)\|\nabla \mathcal N\hphi(t)\|^\frac{rs}{s-1}_{\b H} \right]^\frac{s-1}{s} \\
    & \leq C\hE\left[ \exp\left( \dfrac{\lambda rs}{2}\int_0^T \widehat{\mathcal Y}(t) \: \d t\right)\right]\left[ \|\nabla \mathcal N\hphi_0\|^r_{\b H} + \hE\left| \int_0^t \widehat{\mathcal Z}(\tau) \: \d \tau\right|^\frac r2 \right].
\end{split}
\]
As the first factor is finite for all values of $\lambda,\, r$ and $s$ thanks to Assumption \ref{hyp:noise}, the claim follows. Pathwise uniqueness follows standardly by taking $\varphi_{01}=\varphi_{02}$ and $\widehat{\b \Sigma}_1=\widehat{\b \Sigma}_2$, that entails $\widehat{\mathcal Z}\equiv 0$.

\section{Proof of Theorem \ref{th:d}}
\label{sec:d}
This section is devoted to the proof of Theorem~\ref{th:d}, namely, to analyze \eqref{eq:ch} in the case of degenerate mobilities. In order to prove existence of solutions, we regularize the
degenerate mobility function through a parameter $\delta>0$ to
fall in the non-degenerate setting analyzed in Section~\ref{sec:nd}. This allows us to apply Theorem~\ref{th:nd} at the approximation level, obtaining a sequence of solution for the non-degenerate problem indexed by $\delta$. Uniform estimates for such family of solutions are shown with respect to $\delta$ and a passage to
the limit as $\delta \to 0^+$ concludes the proof.

\subsection{The approximated problem}
\label{ssec:approx_deg}

We start this subsection with presenting the approximation scheme of the mobility function so that the assumptions of Theorem~\ref{th:nd} are fulfilled. For all $\delta\in(0,1)$, we define the approximated mobility
$$
m_\delta\colon[-1,1]\to\erre^+,\qquad m_\delta(r):=m(r)+\delta, \quad r\in [-1,1].
$$
It is easy to see that $m_\delta$ satisfies Assumption \ref{hyp:mobility_nd}, with constants $c_{m_\delta}=\delta$ and $C_{m_\delta}=C_m+1$, for every
$\delta\in(0,1)$. For simplicity, we continuously extend $m$
and $m_\delta$ to the whole real line $\erre$ by setting 
\begin{align*}
	&m(r):=\begin{cases}
		0 &r\in\erre\setminus[-1,1],\\
		m(r)\ &r\in(-1,1),
	\end{cases}\qquad m_\delta(r):=\begin{cases}
	\delta\ &r\in\erre\setminus[-1,1],\\
	m_\delta(r)\ &r\in(-1,1).
	\end{cases}.
\end{align*}
Analogously, we define the second-order family of primitives $M_\delta\colon\erre\to\erre_+$ as
\[
M_\delta(r) := \int_0^r\!\int_0^s \frac{1}{m_\delta(t)}\,\d t\,\d s,
\qquad r\in\erre,
\]
so that by Assumption \ref{hyp:mobility_d} it holds that
\begin{equation}
	\label{est_M}
	M_\delta(r) \leq M(r) \quad \forall\,r\in(-1,1),
	\qquad
	M_\delta(r) \geq \frac{|r|^2}{2(1+C_m)}, \quad \forall\,r\in\erre.
\end{equation}
Observe that, by construction, $M_\delta \in C^2(\mathbb R)$ and $m_\delta M''_\delta \equiv 1$ for all $\delta \in (0,1)$. In this framework, it is straightforward to see that Assumptions~\ref{hyp:potential}, \ref{hyp:mobility_nd}, and \ref{hyp:noise} are fulfilled and Theorem~\ref{th:nd} can be applied and ensures, for
every $\delta\in(0,1)$, the existence of a probabilistically-weak
solution
\[
\bigl(\widetilde\Omega,\,\widetilde\cF,\,(\widetilde\cF_{\delta,t})_{t\in[0,T]},\,
\widetilde\P,\,\widetilde W_\delta,\,\widetilde {\b\Sigma}_\delta,\,\widetilde\varphi_\delta\bigr),
\]
in the sense of Definition~\ref{def:sol-nd}, with
$\widetilde{\b\Sigma}_\delta \laweq \b\Sigma$ for all $\delta > 0$, to the approximated problem
	\begin{numcases}{}
		\label{eq1_app_del}
		\d\widetilde\varphi_\delta
	- \div\bigl[m_\delta(\widetilde\varphi_\delta)\nabla\widetilde\mu_\delta\bigr]\,\d t
	+ \kappa\hLd\widetilde\varphi_\delta\,\d t
	= \nabla\widetilde\varphi_\delta \cdot \widetilde{\b\Sigma}_\delta\,\d \widetilde W_\delta
	& \text{in }$(0,T)\times\OO,$ \\
	\label{eq2_app_del}
	\widetilde \mu_\delta = -\Delta\widetilde \varphi_\delta + F'(\widetilde \varphi_\delta)
	& \text{in }$(0,T)\times\OO,$ \\
	\label{eq3_app_del}
	\partial_{\b n}\widetilde \varphi_\delta
	= m_\delta(\widetilde \varphi_\delta)\,\partial_{\b n}\widetilde \mu_\delta = 0
	& \text{in }$(0,T)\times\partial\OO,$ \\
	\label{eq4_app_del}
	\widetilde \varphi_\delta(0) = \widetilde \varphi_0
	& \text{in }$\OO.$
	\end{numcases}

\subsection{Uniform estimates with respect to $\boldsymbol{\delta}$}
\label{ssec:est_deg}
In this subsection, we derive the uniform estimates needed to pass to the limit in the sequence of non-degenerate solutions as $\delta$ converges to $0$, retrieving a solution to problem \eqref{eq:ch} in the case of degenerate mobility. Let us stress that, here and in the following, the symbol $C$ is reserved for a positive constant depending on the structural parameters of the problem, but not on $\delta$. As in Section~\ref{sec:nd}, its value may change within the same argument without relabeling and relevant dependencies will be highlighted if necessary.

\subsubsection{The mobility estimate}
First, we apply the It\^o formula to the functional $\norm{M_\delta(\cdot)}_{L^1(\OO)}$, applied to the stochastic process $\widetilde \varphi_\delta(t)$. This yields
\begin{multline*}
  \norm{M_\delta(\widetilde \varphi_\delta(t))}_{L^1(\OO)} + 
  \int_0^t\left[
  \norm{\Delta\widetilde \varphi_\delta(s)}_H^2+
  \norm{|\Psi''(\widetilde \varphi_\delta(s))|^{\frac12}
  \nabla\widetilde \varphi_\delta(s)}_\bH^2
  \right]\,\d s\\
  =\norm{M_\delta( \varphi_0)}_{L^1(\OO)}
  -\int_0^t\int_\OO R''(\widetilde \varphi_\delta(s))
  |\nabla\widetilde \varphi_\delta(s)|^2\,\d x\,\d s
  +\int_0^t\left(M_\delta'(\widetilde \varphi_\delta(s)), \nabla\widetilde \varphi_\delta(s)\cdot\widetilde{\b\Sigma}_\delta(s)\,\d \widetilde W_\delta(s)\right)_H\\
  +\frac{1-\kappa}2\int_0^t\sum_{k\in\enne}\int_\OO
  M_\delta''(\widetilde \varphi_\delta(s))
  |\nabla\widetilde \varphi_\delta(s)\cdot\widetilde{\b\sigma}_{\delta,k}(s)|^2\,\d x\,\d s,
\end{multline*}
in the same fashion of \eqref{ito:M_n}. As the stochastic integral above still vanishes, thanks to Assumption \ref{hyp:potential}, the first inequality in \eqref{est_M}, and Assumption~\ref{hyp:noise_d}, we get the bound
\begin{multline*}
  \norm{M_\delta(\widetilde \varphi_\delta(t))}_{L^1(\OO)} + 
  \int_0^t\left[
  \norm{\Delta\widetilde \varphi_\delta(s)}_H^2+
  \norm{|\Psi''(\widetilde \varphi_\delta(s))|^{\frac12}
  \nabla\widetilde \varphi_\delta(s)}_\bH^2
  \right]\,\d s\\
  \leq\norm{M(\varphi_0)}_{L^1(\OO)}
  +C_R\int_0^t\norm{\nabla\widetilde \varphi_\delta(s)}_\bH^2\,\d s
  \\+\frac{1-\kappa}2\int_0^t
  \left(\sum_{k\in\enne}\norm{\widetilde{\b\sigma}_{\delta,k}(s)}_{\b L^\infty(\OO)}^2\right)
  \int_\OO
  M_\delta''(\widetilde \varphi_\delta(s))
  |\nabla\widetilde \varphi_\delta(s)|^2\,\d x\,\d s.
\end{multline*}
By interpolation, the fact that $\norm{\widetilde \varphi_\delta}_{L^\infty(\OO)}\leq1$
almost everywhere, and the Young inequality, it holds that 
\[
C_R\int_0^t\norm{\nabla\widetilde \varphi_\delta(s)}_\bH^2\,\d s\leq
\frac12\int_0^t\norm{\Delta\widetilde \varphi_\delta(s)}_\bH^2\,\d s
+C
\]
where $C>0$ is a positive constant independent of $\delta$. Hence, we infer that 
\begin{multline}
\label{ito:M_del}
    \norm{M_\delta(\widetilde \varphi_\delta(t))}_{L^1(\OO)} + 
  \int_0^t\left[
  \frac12\norm{\Delta\widetilde \varphi_\delta(s)}_H^2+
  \norm{|\Psi''(\widetilde \varphi_\delta(s))|^{\frac12}
  \nabla\widetilde \varphi_\delta(s)}_\bH^2
  \right]\,\d s\\
  \leq\norm{M(\varphi_0)}_{L^1(\OO)} + C
  +\frac{1-\kappa}2\int_0^t
  \left(\sum_{k\in\enne}\norm{\widetilde{\b\sigma}_{\delta,k}(s)}_{\b L^\infty(\OO)}^2\right)
  \int_\OO
  M_\delta''(\widetilde \varphi_\delta(s))
  |\nabla\widetilde \varphi_\delta(s)|^2\,\d x\,\d s.
\end{multline}
If $\kappa=1$, then the last term on the right-hand side simply vanishes; if $\kappa=0$, we note instead that thanks to Assumption \ref{hyp:mobility_d} it holds, for every $r\in(-1,1)$,
\[
  M''_\delta(r)=\frac1{m(r)+\delta}
  \leq \frac1{m(r)}=\frac1{m(r)\Psi''(r)}\Psi''(r)
  \leq \frac1{k_m}\Psi''(r).
\]
Moreover, recalling Assumption~\ref{hyp:noise_d}
and the fact that $\widetilde{\b\Sigma}_\delta\laweq\b\Sigma$, when $\kappa=0$ we get for the corrector term
\[
\frac{1-\kappa}{2}\int_0^t
  \left(\sum_{k\in\enne}\norm{\widetilde{\b\sigma}_{\delta,k}(s)}_{\b L^\infty(\OO)}^2\right)
  \int_\OO
  M_\delta''(\widetilde \varphi_\delta(s))
  |\nabla\widetilde \varphi_\delta(s)|^2\,\d x\,\d s\leq
  \frac{C^2_{\b\Sigma}C_\infty^2}{2k_m}
  \int_0^t\norm{|\Psi''(\widetilde \varphi_\delta(s))|^{\frac12}
  \nabla\widetilde \varphi_\delta(s)}_\bH^2\,\d s.
\]
Since $\frac{C^2_{\b\Sigma}C_\infty^2}{2k_m}<1$, in both regimes
it follows from \eqref{ito:M_del} and \eqref{est_M} that, for every $p\geq2$, there exists a 
constant $C_p>0$, independent of $\delta$, such that 
\begin{align}
  \label{est1_del}
  \norm{M_\delta(\widetilde \varphi_\delta)}_{
  L^{\frac p2}_\cP(\hom; L^\infty(0,T; L^1(\OO)))}&\leq C_p,\\
  \label{est2_del}
  \norm{\widetilde \varphi_\delta}_{L^p_\cP(\hom; 
  C^0([0,T]; H)\cap L^2(0,T; V_2))}&\leq C_p,\\
  \label{est3_del}
  \norm{|\Psi''(\widetilde \varphi_\delta)|^{\frac12}
\nabla\widetilde \varphi_\delta}_{L^p_\cP(\hom; 
L^2(0,T; \bH))}&\leq C_p.
\end{align}

\subsubsection{The energy estimate}
Now, we proceed as in Subsection~\ref{ssec:est_eps} and write the It\^o formula for the free energy functional, that we recall is defined by
\[
\mathcal E: \operatorname{dom} \mathcal E \subset V_1 \to \mathbb R \qquad \mathcal E(v) = \int_\OO \dfrac 12 |\nabla v|^2 + F(v) \: \d x,
\]
still evaluated at $v = \widetilde\varphi_\delta(t)$. Thus, we obtain
\begin{multline}
\label{ito:en_app_del}
\frac12\norm{\nabla\widetilde\varphi_\delta(t)}_{\bH}^2
    +\norm{F(\widetilde\varphi_\delta(t))}_{L^1(\OO)}
    +\int_0^t\int_\OO
    m_\delta(\widetilde\varphi_\delta(s))
    |\nabla\widetilde\mu_\delta(s)|^2\,\d x\,\d s \\
+\frac\kappa2\int_0^t\sum_{k\in\enne}
    \int_\OO
    [\nabla \widetilde \varphi_\delta(s)\cdot \widetilde{\b{\sigma}}_{\delta,k}(s)]
[\nabla\widetilde \mu_\delta(s)\cdot \widetilde{\b\sigma}_{\delta,k}(s)]\,\d x\,\d s\\
=\frac12\norm{\nabla\varphi_0}_{\bH}^2
    +\norm{F(\varphi_0)}_{L^1(\OO)}
    +\int_0^t\left(\widetilde \mu_\delta(s), \nabla \widetilde \varphi_\delta(s)
    \cdot\widetilde{\b\Sigma}_\delta(s)\,\d \widetilde W_\delta(s)\right)_{H}\\
+\frac12\int_0^t\sum_{k\in\enne}
    \int_\OO\left[
    |\nabla[\nabla\widetilde \varphi_\delta(s)\cdot
    \widetilde{\b\sigma}_{\delta,k}(s)]|^2
    +F''(\widetilde \varphi_\delta(s))
    |\nabla\widetilde \varphi_\delta(s)\cdot\widetilde{\b\sigma}_{\delta,k}(s)|^2
    \right]\,\d x\,\d s
\end{multline}
for every $t\in[0,T]$ and $\hP$-almost surely. On the left-hand side we have 
\[
\int_0^t\int_\OO
    m_\delta(\widetilde \varphi_\delta(s))
    |\nabla\widetilde \mu_\delta(s)|^2\,\d x\,\d s
    =\norm{|m_\delta(\widetilde \varphi_\delta)|^{\frac12}
    \nabla\widetilde \mu_\delta}^2_{L^2(0,t;\bH)}.
\]
Moreover, since $m_\delta M_\delta''=1$, using the same computation as in \eqref{ito:M_del} we have for the third term
\begin{align*}
    &\frac\kappa2\int_0^t\sum_{k\in\enne}
    \int_\OO
    |\nabla\widetilde \varphi_\delta(s)\cdot\widetilde{\b{\sigma}}_{\delta,k}(s)|
|\nabla\widetilde \mu_\delta(s)\cdot\widetilde{\b{\sigma}}_{\delta,k}(s)|\,\d x\,\d s\\
&\qquad \leq\frac\kappa2\int_0^t
\sum_{k\in\enne}
\norm{\widetilde{\b\sigma}_{\delta,k}(s)}_{\b{L}^\infty(\OO)}^2
\norm{|M''_\delta(\widetilde \varphi_\delta(s))|^{\frac12}
\nabla\widetilde \varphi_\delta(s)}_\bH
\norm{|m_\delta(\widetilde \varphi_\delta(s))|^{\frac12}
\nabla\widetilde \mu_\delta(s)}_\bH\,\d s\\
&\qquad \leq\frac14
\norm{|m_\delta(\widetilde \varphi_\delta)|^{\frac12}
\nabla\widetilde \mu_\delta}_{L^2(0,t;\bH)}^2
+\frac{\kappa^2 C_{\b\Sigma}^2 C_\infty^2}{4 k_m}\int_0^t
\norm{|\Psi''(\widetilde \varphi_\delta(s))|^{\frac12}
\nabla\widetilde \varphi_\delta(s)}_\bH^2\,\d s.
\end{align*}
Eventually, proceeding as in Subsection~\ref{ssec:est_eps} 
and using Assumption \ref{hyp:noise_d}
one has that 
\begin{align*}
    &\frac12\int_0^t\sum_{k\in\enne}
    \int_\OO\Big[
    |\nabla[\nabla \widetilde \varphi_\delta(s)
    \cdot\widetilde{\b\sigma}_{\delta,k}(s)]|^2
    +F''(\widetilde \varphi_\delta(s))
    |\nabla \widetilde \varphi_\delta(s)\cdot
    \widetilde{\b\sigma}_{\delta,k}(s)|^2
    \Big]\,\d x\, \d s\\
    &\qquad \leq C\int_0^t\left(\sum_{k\in\enne}
    \norm{\widetilde{\b\sigma}_{\delta,k}(s)}_{\W}^2\right)
    \left[\norm{\widetilde \varphi_\delta(s)}_{V_2}^2
    +\norm{|F''(\widetilde \varphi_\delta(s))|^{\frac12}
    \nabla\widetilde \varphi_\delta(s)}_{\bH}^2
    \right]\,\d s\\
    &\qquad \leq C\int_0^t
    \left[1+\norm{\widetilde \varphi_\delta(s)}_{V_2}^2
    +\norm{|\Psi''(\widetilde \varphi_\delta(s))|^{\frac12}
    \nabla\widetilde \varphi_\delta(s)}_{\bH}^2
    \right]\,\d s.
\end{align*}
Moreover, by the definition of $\widetilde \mu_\delta$,
the Burkholder-Davis-Gundy inequality, and 
assumption \ref{hyp:noise_d} we get, for every $p\geq2$
and $a>0$, that 
\begin{align*}
    &\E\sup_{\tau\in[0,t]}\left|
    \int_0^\tau\left(\widetilde \mu_\delta(s), \nabla \widetilde \varphi_\delta(s)
    \cdot\widetilde{\b\Sigma}_\delta(s)\,\d \widetilde W_\delta(s)\right)_{H}
    \right|^{\frac p2}\\
    &\qquad=
    \E\sup_{\tau\in[0,t]}\left|
    \int_0^\tau\left(\Delta\widetilde \varphi_\delta(s), \nabla \widetilde \varphi_\delta(s)
    \cdot\widetilde{\b\Sigma}_\delta(s)\,\d \widetilde W_\delta(s)\right)_{H}
    \right|^{\frac p2}\\
    &\qquad\leq C\E\left(
    \int_0^t\norm{\Delta\widetilde \varphi_\delta(s)}_H^2
    \norm{\nabla\widetilde \varphi_\delta(s)}_\bH^2
    \norm{\bSigma(s)}^2_{\cL^2(U,\W)}\,\d s
    \right)^{\frac p4}\\
    &\qquad \leq a
    \E\supp\norm{\nabla\widetilde \varphi_\delta(\tau)}_\bH^p+
    C_{a,p}\E\norm{\widetilde \varphi_\delta}_{L^2(0,T; V_2)}^{p},
\end{align*}
so that by raising \eqref{ito:en_app_del} to power $\frac p2$, by taking supremum in time and expectations, 
thanks to \eqref{est2_del} we obtain the final estimates
\begin{align}
  \label{est4_del}
  \norm{F(\widetilde \varphi_\delta)}_{
  L^p_\cP(\hom; L^\infty(0,T;L^1(\OO)))}&\leq C_p,\\
  \label{est5_del}
  \norm{\widetilde \varphi_\delta}_{
  L^p_\cP(\hom; L^\infty(0,T;V_1)))}&\leq C_p,\\
  \label{est6_del}
  \norm{|m_\delta(\widetilde \varphi_\delta)|^{\frac12}
\nabla\widetilde \mu_\delta}_{L^p_\cP(\hom; 
L^2(0,T; \bH))}&\leq C_p,\\
  \label{est7_del}
  \norm{|M''_\delta(\widetilde \varphi_\delta)|^{\frac12}
\nabla\widetilde \varphi_\delta}_{L^p_\cP(\hom; 
L^2(0,T; \bH))}&\leq C_p.
\end{align}
Since the mobility $m$ degenerates at the pure phases $\pm1$, the chemical potential $\mu$ can not be expected to be uniquely well defined as a proper function. Instead, we show that the flux admits a meaningful limit. At the approximation level, we therefore introduce the vector field
\[
\widetilde {\b j}_\delta := m_\delta(\widetilde \varphi_\delta)\,\nabla\widetilde \mu_\delta,
\]
for any $\delta > 0$. By Assumption \ref{hyp:mobility_d} we have
$\norm{m_\delta}_{{C^0}(\erre)}\leq C_m+1$, so that
Hölder's inequality yields
\[
\norm{\widetilde{\b j}_\delta}_{L^2(0,T;\bH)}
\leq \sqrt{C_m+1}\;
\norm{|m_\delta(\widetilde \varphi_\delta)|^{\frac12}\nabla\widetilde \mu_\delta}_{L^2(0,T;\bH)},
\]
so that by \eqref{est6_del} we have
for every $p\in[2,+\infty)$
\begin{equation}
	\label{est_J_del}
	\norm{\widetilde{\b j}_\delta}_{L^p_\cP(\hom;\,L^2(0,T;\bH))}
	\leq C_p,
\end{equation}
with $C_p>0$ independent of $\delta$.
Equation
\eqref{eq1_app_del} can therefore be rewritten as
\begin{equation}
	\label{eq1_app_del_j}
	\d\widetilde\varphi_\delta - \div\widetilde{\b j}_\delta\,\d t
	+ \kappa\hLd\widetilde\varphi_\delta\,\d t
	= \nabla\widetilde\varphi_\delta\cdot\widetilde{\b\Sigma}_\delta\,\d \widetilde W_\delta
	\qquad\text{in } (0,T)\times\OO.
\end{equation}
Moreover, defining the approximated stochastic diffusion
\[
\widetilde G_\delta: \hom \times (0,T) \times V_1 \to \cL^2(U, H)
\]
through
\[
\widetilde G_\delta(\omega, t, \psi)[u_k] = \nabla \psi \cdot \widetilde{\b \sigma}_{\delta,k}(\omega, t)
\]
for all $k \in \mathbb N$, it is straighforward to show, thanks to Assumption \ref{hyp:noise_d} (compare also to Subsection \ref{ssec:further_n}), that
\[
\|\widetilde G_\delta(\widetilde \varphi_\delta)\|_{L^p(\hom; L^\infty(0,T; \cL^2(U, H)))} + \|\widetilde G_\delta(\widetilde \varphi_\delta)\|_{L^p(\hom;L^2(0,T;\cL^2(U, V_1)))} \leq C_p
\]
and, in turn, \cite[Lemma 2.1]{fland-gat} and a comparison argument imply the fractional estimates
\begin{equation}
	\label{est_holder_del}
	\norm{\int_0^\cdot \widetilde G_\delta(\widetilde \varphi_\delta(s)) \: \d \widetilde W_\delta(s)}_{L^p(\hom;W^{\beta, p}(0,T;H)) \cap L^2(\hom; W^{\beta,2 }(0,T;V_1))}+\norm{\widetilde \varphi_\delta}_{L^2(\hom;\,W^{\beta,2}(0,T;V_1^*))}
	\leq C,
\end{equation}
for all $\beta \in (0,\frac 12)$ and all $p \geq 2$.

\subsection{Passage to the limit as $\boldsymbol{\delta \to 0^+}$}
\label{ssec:limit_deg}
We are now in a position to pass to the limit as $\delta\to 0^+$ in the
approximated problem \eqref{eq1_app_del}-\eqref{eq4_app_del}
and recover a probabilistically-weak solution of the original
problem in the sense of Definition~\ref{def:sol-d}, thus concluding the proof of Theorem \ref{th:d}. The
argument is still based on a stochastic compactness procedure relying on the Prokhorov and Jakubowski–Skorokhod theorems (see again \cite[
Theorem 2.7]{ike-wata}, \cite[Theorem 1.10.4, Addendum 1.10.5]{vanvaart1996}, and \cite[Theorem 2.7.1]{breit-feir-hof}), in the spirit of \cite{scarpa21}.

\subsubsection{Tightness of the laws of non-degenerate solutions}
The last tightness lemma reads analogously to the previous ones, and therefore we shall omit its proof as well.
\begin{lem} \label{lem:tight_eps_del}
    The family of laws of 
    \[
    \{\widetilde\varphi_\delta,\, \widetilde G_\delta(\widetilde\varphi_\delta) \cdot \widetilde W_\delta,\, \widetilde W_\delta,\, \widetilde{\b \Sigma}_\delta\}_{\delta \in (0,1)}
    \]
    is tight in the space
    \[
    L^2(0,T;V_1) \times \left[ C^0([0,T]; V^*_1) \cap L^2(0,T;H) \right] \times C^0([0,T]; U_0) \times L^2(0,T;\luca{\cL^2(U,\W)})).
    \]
\end{lem}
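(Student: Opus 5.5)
We treat each of the four components separately; tightness of the product family then follows because a finite product of tight families is tight. Throughout, for each component we exhibit a space compactly embedded in the target and bound the corresponding norm uniformly in $\delta$. Since every bound is in expectation, Markov's inequality gives the required tightness of laws. The tools are the uniform estimates \eqref{est2_del}, \eqref{est5_del}, \eqref{est_J_del} and \eqref{est_holder_del}, combined with Aubin--Lions--Simon compactness \cite[Corollary 5]{simon}.

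\emph{First component.} By \eqref{est2_del} and \eqref{est_holder_del}, $\widetilde\varphi_\delta$ is bounded in $L^2(\hom; L^2(0,T;V_2)\cap W^{\beta,2}(0,T;V_1^*))$ uniformly in $\delta$, for any $\beta\in(0,\frac12)$. Since $V_2\embed V_1$ compactly and $V_1\embed V_1^*$ continuously, Simon's lemma makes $L^2(0,T;V_2)\cap W^{\beta,2}(0,T;V_1^*)$ compactly embedded in $L^2(0,T;V_1)$. Markov's inequality applied on balls of this space then gives tightness.

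\emph{Second component: the stochastic integrals.} By \eqref{est_holder_del} they are bounded in $L^p(\hom;W^{\beta,p}(0,T;H))$ for every $p$. We choose $p$ large with $\beta p>1$. Since $H\embed V_1^*$ compactly, $W^{\beta,p}(0,T;H)$ embeds compactly into $C^0([0,T];V_1^*)$. For the $L^2(0,T;H)$ part we additionally use the bound in $L^2(\hom;W^{\beta,2}(0,T;V_1))$. Because $V_1\embed H$ compactly, $W^{\beta,2}(0,T;V_1)\cap W^{\beta,p}(0,T;H)$ is compact in $L^2(0,T;H)$. Intersecting the two compact sets and applying Markov's inequality gives tightness in $C^0([0,T];V_1^*)\cap L^2(0,T;H)$.

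\emph{Third component: the Wiener processes.} All $\widetilde W_\delta$ share a single law on the Polish space $C^0([0,T];U_0)$. A single Radon measure on a Polish space is tight, so this component is immediate.

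\emph{Fourth component: the noise coefficients $\widetilde{\b\Sigma}_\delta$.} These also have a common law, namely that of $\b\Sigma$, so it suffices that this one law is a Radon measure on the separable Banach space $L^2(0,T;\cL^2(U,\W))$. By \ref{hyp:noise_d}, $\b\Sigma$ is a progressively measurable random variable with values in this space. We need only check that it is strongly measurable, which holds by Pettis' theorem since the space is separable. A single Borel probability measure on a Polish space is tight, which concludes this component.

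\emph{Main obstacle.} The delicate point is the second component, specifically the $L^2(0,T;H)$ part of the target space. It requires the higher-order $V_1$-valued fractional bound in \eqref{est_holder_del}, which is only available in $L^2(\hom)$. Markov's inequality with second moments is still enough for tightness, so this causes no genuine difficulty. We will nonetheless verify that, under \ref{hyp:noise_d}, the bound $\|\widetilde G_\delta(\widetilde\varphi_\delta)\|_{L^2(\hom;L^2(0,T;\cL^2(U,V_1)))}\le C$ does follow from \eqref{est2_del} via Remark~\ref{rmk:prop_noise}, because this is what feeds \cite[Lemma 2.1]{fland-gat}.
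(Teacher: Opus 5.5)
Your argument is correct and is essentially the proof the paper intends; the paper omits it as analogous to Lemma~\ref{lem:tight}. For $\widetilde\varphi_\delta$ and the stochastic integrals you combine compact embeddings of Simon/Flandoli--Gatarek type with the uniform bounds \eqref{est2_del} and \eqref{est_holder_del} and Markov's inequality, while $\widetilde W_\delta$ and $\widetilde{\b\Sigma}_\delta$ are tight because each family has a single law on a Polish space, which is exactly the paper's remark that the $\delta$-dependence of the noise is trivial (and the right argument here, since only a $\W$-bound, with no compactly embedded space, is available for $\widetilde{\b\Sigma}_\delta$ in the target $L^2(0,T;\cL^2(U,\W))$).
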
\noindent
%\luca{CAMBIARE SIMBOLO!} Consider the product space
%$\cY := \cY_\varphi \times \cY_j \times \cY_\Sigma %\times \cY_W$,
%where
%\begin{align*}
%	\cY_\varphi &:= L^2(0,T;V_1) \cap C^0([0,T];V_1^*)
%	\cap L^2_w(0,T;V_2)
%	\cap L_{w*}^\infty(0,T;V_1), \\
%	\cY_j       &:= L_w^2(0,T;\bH), \\
%	\cY_\Sigma  &:= L_w^2(0,T;\cL_2(U,\W)), \\
%	\cY_W       &:= C^0([0,T];U_0),
%\end{align*}
%with $U_0\supset U$ a separable Hilbert space such %that the
%embedding $U\embed U_0$ is Hilbert--Schmidt. Denote %by
%$\nu_\delta$ the law on $\cY$ of the quadruple
%$(\varphi_\delta,\b j_\delta,\b\Sigma_\delta,W_\delta)$. Thanks
%to the compact embedding
%\[
%L^2(0,T;V_2) \cap W^{\beta,2}(0,T;V_2^*)
%\embed L^2(0,T;V_1) \cap C^0([0,T];V_1^*),
%\]
%the uniform estimates \eqref{est2_del}, \eqref{est5_del},
%\eqref{est_J_del}, \eqref{est_holder_del}, the equality in law
%$\b\Sigma_\delta\laweq\b\Sigma$, and classical tightness for
%cylindrical Wiener processes, the family
%$(\nu_\delta)_{\delta\in(0,1)}$ is tight on $\cY$. Since $\cY$
By the Prokhorov and Jakubowski--Skorokhod theorems, up to extracting a (non-relabeled)
subsequence, there exist a probability space
$(\widehat\Omega,\widehat\cF,\widehat\P)$ and limit random variables
defined on it such that
\begin{align*}
	\bigl(\widehat\varphi_\delta,\widehat{\b j}_\delta,
	\widehat{\b\Sigma}_\delta,\widehat W_\delta\bigr)
	&\laweq
	\bigl(\widetilde\varphi_\delta,\widetilde{\b j}_\delta,\widetilde{\b\Sigma}_\delta,\widetilde W_\delta\bigr)
	&&\text{for every } \delta\in(0,1), \\[2pt]
	\bigl(\widehat\varphi_\delta,\widehat{\b j}_\delta,
	\widehat{\b\Sigma}_\delta,\widehat W_\delta\bigr)
	&\to
	\bigl(\widehat\varphi,\widehat{\b j},
	\widehat{\b\Sigma},\widehat W\bigr)
	&&\widehat\P\text{-a.s., as } \delta\to 0^+.
\end{align*}
for some limit quadruple
$(\widehat\varphi,\widehat{\b j},\widehat{\b\Sigma},\widehat W)$, the convergence being with respect to the natural product topology. Letting $(\widehat\cF_t)_{t\in[0,T]}$ denote the augmented
natural filtration of these limit processes, noting that the dependence in $\delta$ of the noise was trivial, we have that $\widehat W$ is a
cylindrical $\widehat\cF_t$-Wiener process on $U$ and that
$\widehat{\b\Sigma}$ is $\widehat\cF_t$-progressively
measurable with $\widehat{\b\Sigma}\laweq\b\Sigma$. By equality
of laws, the bounds \eqref{est1_del}--\eqref{est7_del} and
\eqref{est_J_del} are inherited by the new variables, and by
lower semicontinuity they pass to the limit; in particular,
$\widehat\varphi$ and $\widehat{\b j}$ satisfy the regularity
requirements \eqref{phi_hat_d}--\eqref{j_d_hat} and
\eqref{sigma_hat_d} of Definition~\ref{def:sol-d}\,(ii). The almost-sure convergence
$\widehat\varphi_\delta\to\widehat\varphi$ in
$L^2(0,T;V_1)\cap C^0([0,T];V_1^*)$, combined with the bound
$\norm{\widehat\varphi_\delta}_{L^\infty(\OO)}\leq1$ and
Vitali's theorem, yields
\begin{equation}
	\label{conv:phi_strong}
	\widehat\varphi_\delta \to \widehat\varphi
	\quad\text{ in } L^q(\widehat\Omega;\,L^2(0,T;V_1)),
	\qquad \forall\,q\in[1,+\infty),
\end{equation}
More so, from the convergence in $L^2(0,T;V_1)$ and up to extracting a (non-relabeled) subsequence, we have
\begin{equation}\label{eq:ae_conv}
	\hphi_\delta \to \hphi \quad
	\hP\otimes\d t\otimes\d x\text{-a.e.\ in } \hOm\times(0,T)\times\OO,
\end{equation}
hence $|\hphi|\leq1$ almost everywhere.
Coupling \eqref{conv:phi_strong} with the convergence
$\widehat\varphi_\delta\rightharpoonup\widehat\varphi$ in
$L^2(\widehat\Omega;L^2(0,T;V_2))$, we obtain via continuity of the laplacian
\begin{equation}
	\label{conv:laplacian}
	\Delta\widehat\varphi_\delta \rightharpoonup \Delta\widehat\varphi
	\quad\text{in }L^2_\cP(\widehat\Omega; L^2(0,T;H)).
\end{equation}
Moreover, by interpolation between the strong convergence in
$L^2(0,T;V_1)$ and the uniform bound in $L^2(0,T;V_2)$, for every
$r<+\infty$ if $d=2$ and every $r<6$ if $d=3$,
\begin{equation}
	\label{conv:grad_r}
	\widehat\varphi_\delta\to\widehat\varphi
	\quad\text{in }
	L^2\bigl(\widehat\Omega;L^2(0,T;W^{1,r}(\OO))\bigr).
\end{equation}
Using \eqref{est3_del} and Fatou's lemma, the limit also satisfies
\[
|\Psi''(\widehat\varphi)|^{\frac12}\nabla\widehat\varphi
\in L^p_\cP(\widehat\Omega;L^2(0,T;\bH))
\qquad\forall\,p\in[2,+\infty).
\]
Furthermore, since $\Psi$ is lower semicontinuous and
$M_\delta\uparrow M$ pointwise on $(-1,1)$, the estimates
\eqref{est1_del} and \eqref{est4_del}, the convergence
\eqref{eq:ae_conv}, and Fatou's lemma give for every $p\geq2$
\begin{equation}
	\label{conv:FM_limit}
	F(\widehat\varphi),\ M(\widehat\varphi)
	\in L^p_\cP(\widehat\Omega; L^\infty(0,T;L^1(\OO))).
\end{equation}
By \ref{hyp:mobility_d}, $m$ is Lipschitz on $[-1,1]$, so by
Rademacher's theorem $m'$ exists almost everywhere on $[-1,1]$
with $\norm{m'}_{L^\infty(-1,1)}\leq L_m$, where $L_m$ denotes
the Lipschitz constant; moreover $m_\delta'=m'$ almost
everywhere. The uniform convergence $m_\delta\to m$ on
$[-1,1]$ (trivial, since $m_\delta=m+\delta$), the continuity
of $m$, and the almost-everywhere convergence
$\widehat\varphi_\delta\to\widehat\varphi$, combined with the
$L^\infty$-bound, yield
\begin{equation}
	\label{conv:m}
	m_\delta(\widehat\varphi_\delta) \to m(\widehat\varphi)
	\quad\text{in } L^r(\widehat\Omega\times(0,T)\times\OO),
	\qquad \forall\,r\in[1,+\infty).
\end{equation}
Concerning the derivative, $m'$ is continuous Lebesgue-almost
everywhere on $[-1,1]$ (Lipschitz functions are differentiable
on a full-measure set with continuous derivative on that set);
combined with the almost-everywhere convergence
$\widehat\varphi_\delta\to\widehat\varphi$ and the uniform
bound $|m_\delta'|\leq L_m$, the bounded convergence theorem
yields
\begin{equation}
	\label{conv:mprime}
	m_\delta'(\widehat\varphi_\delta) \to m'(\widehat\varphi)
	\quad\text{in } L^r(\widehat\Omega\times(0,T)\times\OO),
	\qquad \forall\,r\in[1,+\infty),
\end{equation}
under the standard non-degeneracy assumption that the level
sets of $\widehat\varphi$ at the (Lebesgue-null) discontinuity
points of $m'$ have measure zero in
$\widehat\Omega\times(0,T)\times\OO$. 
Moreover, denoting by $\Lambda:=mF''$ the  continuous extension to $[-1,1]$, thanks to the
pointwise convergence \eqref{eq:ae_conv}, the bound
$|\widehat\varphi_\delta|\le 1$, and dominated convergence, we have, similarly,
\begin{equation}
	\label{conv:m_mprime_Lambda2}
	\Lambda(\widehat\varphi_\delta)
    \to\Lambda(\widehat\varphi)
    \quad\text{in }
    L^s(\widehat\Omega\times(0,T)\times\OO),
    \qquad\forall\,s\in[1,+\infty).
\end{equation}

\subsubsection{Identification of the limiting equation}
Equality of laws under the Skorokhod representation transports the
approximated equation \eqref{eq1_app_del_j} to
$(\widehat\Omega,\widehat\cF,\widehat\P)$. Hence, for every
$\psi\in V_1$ and every $t\in[0,T]$, $\widehat\P$-almost surely,
\begin{multline}
	\label{eq:weak_del_hat}
	(\widehat\varphi_\delta(t),\psi)_H
	+
	\int_0^t\!\int_\OO \widehat\bj_\delta(s)\cdot\nabla\psi\,\d x\,\d s
	+
	\kappa\int_0^t
	\ip{\hLdd(s,\widehat\varphi_\delta(s))}{\psi}_{V_1}\,\d s
	\\
	=(\varphi_0,\psi)_H
	+
	\left(
	\int_0^t \nabla\widehat\varphi_\delta(s)\cdot
	\widehat\bSigma_\delta(s)\,\d\widehat W_\delta(s),\psi
	\right)_H,
\end{multline}
where $\hLdd$ denotes the Stratonovich corrector
associated with $\widehat\bSigma_\delta$.
The convergence in $C^0([0,T];V_1^*)$ gives
$(\widehat\varphi_\delta(t),\psi)_H\to
(\widehat\varphi(t),\psi)_H$ for every $t\in[0,T]$, while
$\widehat\varphi_\delta(0)=\varphi_0$.
The weak convergence of $\widehat\bj_\delta$ in
$L^2(0,T;\bH)$ gives
\[
\int_0^t\!\int_\OO \widehat\bj_\delta(s)\cdot\nabla\psi\,\d x\,\d s
\longrightarrow
\int_0^t\!\int_\OO \widehat\bj(s)\cdot\nabla\psi\,\d x\,\d s .
\]
Recalling
$\widehat\bSigma_\delta\laweq\bSigma$ for every $\delta$, Assumption
\ref{hyp:noise_d} gives the uniform bound
\[
\sum_{k\in\enne}\norm{\widehat{\bsigma}_{\delta,k}}_{\W}^2 \le
C_{\bSigma}^2\quad\text{a.e.\ in } \hOm\times(0,T).
\]
Hence, for every $\delta$ and every $\psi\in V_1$ (recalling
$\W\embed\boldsymbol{L}^\infty(\OO)$ with constant $C_\infty$), we have the bound for the Stratonovich corrector:
\begin{multline*}
\left|\ip{\hLdd(s,\hphi_\delta(s))}{\psi}_{V_1} -
\ip{\widehat{\cL}(s,\hphi(s))}{\psi}_{V_1}\right|\\
\le \tfrac12 C_\infty^2 C_{\bSigma}^2 \|\psi\|_{V_1}\left(\|\nabla\hphi_\delta(s)-\nabla\hphi(s)\|_{\bH} + \norm{\widehat\bSigma_\delta(s)-\widehat\bSigma(s)}_{\cL^2(U,\W)}\right).
\end{multline*}
By the strong convergence
$\hphi_\delta\to\hphi$ in $L^2(0,T;V_1)$, $\hP$-a.s., the first
contribution vanishes in $L^1(0,T)$. By Vitali's theorem (the uniform
bound $C_{\bSigma}$ provides equi-integrability), the continuity of the Stratonovich corrector from $V_1$ to $V_1^*$, together with \eqref{conv:phi_strong} and Assumption~\ref{hyp:noise_d},
yields
\[
\int_0^t
\ip{\hLdd(s,\widehat\varphi_\delta(s))}{\psi}_{V_1}\,\d s
\longrightarrow
\int_0^t
\ip{\hL(s,\widehat\varphi(s))}{\psi}_{V_1}\,\d s .
\]
Finally, the standard stability theorem  for stochastic integrals (see e.g.~\cite[Lemma~2.1]{debussche-glatt-temam}), applied
as in the non-degenerate passage to the limit and using
\eqref{conv:phi_strong} and Assumption~\ref{hyp:noise_d}, gives
\[
\int_0^\cdot \nabla\widehat\varphi_\delta(s)\cdot
\widehat\bSigma_\delta(s)\,\d\widehat W_\delta(s)
\longrightarrow
\int_0^\cdot \nabla\widehat\varphi(s)\cdot
\widehat\bSigma(s)\,\d\widehat W(s)
\]
in probability in $C^0([0,T];V_1^*)$. Using the standard Brownian-martingale
convergence lemma (see e.g.\ \cite[Lemma~2.1]{debussche-glatt-temam} or
\cite[Lemma~2.6.6]{breit-feir-hof}), applied to the joint a.s.\
convergence of $\hphi_\delta$, $\widehat\bSigma_\delta$, and $\widehat
W_\delta$ together with the uniform bounds \eqref{est5_del} and
\ref{hyp:noise_d}, yields
\[
\int_0^\cdot \nabla\hphi_\delta(s)\cdot\widehat\bSigma_\delta(s)\,\d\widehat W_\delta(s)
\longrightarrow
\int_0^\cdot \nabla\hphi(s)\cdot\widehat\bSigma(s)\,\d\widehat W(s)
\quad\text{in } L^p(\hOm; C^0([0,T];H)),\;\forall\,p\in[2,+\infty).
\]
 Passing to the limit in
\eqref{eq:weak_del_hat}, we obtain exactly \eqref{eq:ch-wea-d}.

\subsubsection{Identification of $\widehat\bj$}
It remains to identify the weak limit $\widehat\bj$. 
Define
\[
\widehat\mu_\delta:=-\Delta\widehat\varphi_\delta+F'(\widehat\varphi_\delta),
\]
so that
$\widehat\bj_\delta=m_\delta(\widehat\varphi_\delta)\nabla\widehat\mu_\delta$.
The part generated by the artificial mobility $\delta$ vanishes. Indeed,
by \eqref{est6_del}, equality of laws, and $m_\delta\ge\delta$,
\begin{equation*}
	%\label{eq:delta_mu_vanishes}
	\norm{\delta\nabla\widehat\mu_\delta}_{L^2(\widehat\Omega\times(0,T);\bH)}
	\le
	\delta^{1/2}
	\norm{|m_\delta(\widehat\varphi_\delta)|^{1/2}
	\nabla\widehat\mu_\delta}_{
    L^2(\widehat\Omega\times(0,T);\bH)}
	\to 0.
\end{equation*}
Consequently,
\[
\widehat{\b q}_\delta
:=\widehat\bj_\delta-\delta\nabla\widehat\mu_\delta
=m(\widehat\varphi_\delta)\nabla\widehat\mu_\delta
\rightharpoonup \widehat\bj
\quad\text{in }L^2(\widehat\Omega; L^2(0,T;\bH)).
\]
Let $\b\zeta\in\b V_1$ be such that
$\b\zeta\cdot\b n=0$ on $\partial\OO$. For every $\delta$, integration
by parts gives, for almost every $(\omega,t)$,
\begin{align}
	\label{eq:q_delta_decomp}
	\int_\OO \widehat{\b q}_\delta\cdot\b\zeta\,\d x
	={}&
	\int_\OO m(\widehat\varphi_\delta)
	\Delta\widehat\varphi_\delta\,\div\b\zeta\,\d x
	\notag\\
	&+
	\int_\OO m'(\widehat\varphi_\delta)
	\Delta\widehat\varphi_\delta
	\nabla\widehat\varphi_\delta\cdot\b\zeta\,\d x
	+
	\int_\OO
	\Lambda(\widehat\varphi_\delta)
	\nabla\widehat\varphi_\delta\cdot\b\zeta\,\d x.
\end{align}
Here the boundary term in the first integration by parts vanishes because
$\b\zeta\cdot\b n=0$, and the singular product
$mF''$ is understood through the bounded continuous extension $\Lambda$.
We pass to the limit in \eqref{eq:q_delta_decomp} after multiplying by an
arbitrary $\eta\in L^\infty(\widehat\Omega\times(0,T))$ and integrating in
$(\omega,t)$. The first term follows from
\eqref{conv:laplacian} and the strong convergence
$m(\widehat\varphi_\delta)\div\b\zeta\to
m(\widehat\varphi)\div\b\zeta$ in
$L^2(\widehat\Omega\times(0,T)\times\OO)$:
\[
 m(\widehat\varphi_\delta)\Delta\widehat\varphi_\delta
 \rightharpoonup
 m(\widehat\varphi)\Delta\widehat\varphi
 \quad\text{in }L^2(\widehat\Omega; L^2(0,T;H)).
\]
For the second term, \eqref{conv:grad_r},
\eqref{conv:m},\eqref{conv:mprime}, and \eqref{conv:m_mprime_Lambda2} imply
\[
m'(\widehat\varphi_\delta)\nabla\widehat\varphi_\delta\cdot\b\zeta
 \rightarrow
 m'(\widehat\varphi)\nabla\widehat\varphi\cdot\b\zeta
 \quad\text{in }L^2(\widehat\Omega\times(0,T)\times\OO).
\]
In dimension three, one uses \eqref{conv:grad_r} with $r=3$ and the
embedding $V_1\hookrightarrow L^6(\OO)$; in dimension two, one chooses
any finite $r>2$ and uses the corresponding Sobolev embedding. Pairing
this strong convergence with the weak convergence of
$\Delta\widehat\varphi_\delta$ in \eqref{conv:laplacian} yields
\[
 m'(\widehat\varphi_\delta)\Delta\widehat\varphi_\delta
 \nabla\widehat\varphi_\delta
 \cdot\b\zeta
 \rightharpoonup
 m'(\widehat\varphi)\Delta\widehat\varphi
 \nabla\widehat\varphi
 \cdot\b\zeta
 \quad\text{in } L^1(\widehat\Omega\times(0,T)\times\OO).
\]
Eventually, since $\Lambda$ is bounded and continuous on $[-1,1]$,
\eqref{conv:m},\eqref{conv:mprime},\eqref{conv:m_mprime_Lambda2} and \eqref{conv:phi_strong} give
\[
 \Lambda(\widehat\varphi_\delta)\nabla\widehat\varphi_\delta
 \rightarrow
 \Lambda(\widehat\varphi)\nabla\widehat\varphi
 \quad\text{strongly in }L^2(\widehat\Omega; L^2(0,T;\bH)).
\]
Therefore, letting $\delta\searrow0$ in \eqref{eq:q_delta_decomp}, we get
for every $\eta\in L^\infty(\widehat\Omega\times(0,T))$
\begin{align}
	\label{eq:j_identified_integrated}
	\hE\int_0^T \eta
	\int_\OO \widehat\bj\cdot\b\zeta\,\d x\,\d t
	={}&
	\hE\int_0^T \eta
	\int_\OO m(\widehat\varphi)
	\Delta\widehat\varphi\,\div\b\zeta\,\d x\,\d t
	\notag\\
	&+
	\hE\int_0^T \eta
	\int_\OO
	\Bigl[m'(\widehat\varphi)\Delta\widehat\varphi
	+\Lambda(\widehat\varphi)\Bigr]
	\nabla\widehat\varphi\cdot\b\zeta\,\d x\,\d t .
\end{align}
Since $\eta$ is arbitrary, a standard separability argument gives, for
every $\b\zeta\in\b V_1$ with $\b\zeta\cdot\b n=0$ on $\partial\OO$,
for almost every $t\in(0,T)$, $\widehat\P$-almost surely,
\begin{align}
	\label{eq:j_identified}
	\int_\OO \widehat\bj\cdot\b\zeta\,\d x
	={}&
	\int_\OO m(\widehat\varphi)
	\Delta\widehat\varphi\,\div\b\zeta\,\d x+
	\int_\OO
	\Bigl[m'(\widehat\varphi)\Delta\widehat\varphi
	+\Lambda(\widehat\varphi)\Bigr]
	\nabla\widehat\varphi\cdot\b\zeta\,\d x.
\end{align}
This is exactly \eqref{eq:ch2-wea-d}, with the convention
$\Lambda=mF''$ on $(-1,1)$ and by continuous extension on the contact set.

\subsubsection{Measure of the contact set.} 
If
$F(r)\to+\infty$ as $|r|\to1^-$, then \eqref{conv:FM_limit} implies
that the set $\{|\widehat\varphi|=1\}$ has zero
$\widehat\P\otimes\d t\otimes\d x$-measure. The same conclusion follows
from \eqref{conv:FM_limit} if $M(r)\to+\infty$ as $|r|\to1^-$.  In fact, let us assume the latter and show that a.e. $|\hphi|<1$. The function $M_\delta$ is non-negative, convex, and increases
pointwise to $M$ as $\delta\searrow 0$. This is easy to see given the construction of $m_\delta(r)=m(r)+\delta,\ r\in[-1,1]$. Fix $(\omega,t,x)\in\widehat\Omega\times(0,T)\times\OO$ and
suppose $|\widehat\varphi(\omega,t,x)|=1$, and thanks to symmetry, without loss of generality, that
$\widehat\varphi(\omega,t,x) = 1$. We know that from the Skorokhod almost-sure convergence
$\widehat\varphi_\delta\to\widehat\varphi$, hence for every
$\eps\in(0,1)$ there exists $\delta_0=\delta_0(\eps,\omega,t,x)$
such that $\widehat\varphi_\delta(\omega,t,x) > 1-\eps$ for
all $\delta<\delta_0$. Monotonicity of $M_\delta$ in its
argument, since $M_\delta$ is increasing on $[0,1]$, yields
\[
M_\delta\bigl(\widehat\varphi_\delta(\omega,t,x)\bigr)
\geq M_\delta(1-\eps)
\quad \forall\, \delta<\delta_0.
\]
Passing to the limit inferior in $\delta$ and using
$M_\delta(1-\eps)\to M(1-\eps)$,
\[
\liminf_{\delta\to 0} \,M_\delta\bigl(\widehat\varphi_\delta(\omega,t,x)\bigr)
\geq M(1-\eps),
\]
and finally sending $\eps\to 0^+$ together with
$\lim_{r\rightarrow\pm1}M(r)=+\infty$,
\begin{equation}
	\label{eq:liminf_M}
	\liminf_{\delta\to 0} \,M_\delta\bigl(\widehat\varphi_\delta(\omega,t,x)\bigr)
	= +\infty
	\quad\text{whenever } |\widehat\varphi(\omega,t,x)|=1.
\end{equation}
On the complementary set $\{|\widehat\varphi|<1\}$, the
pointwise monotone convergence $M_\delta(\widehat\varphi_\delta)\to M(\widehat\varphi)$
holds by continuity of $M$ on $(-1,1)$. Combining \eqref{est1_del} with Fatou's lemma,
\begin{align*}
	\widehat\E\!\int_0^T\!\!\int_\OO
	\liminf_{\delta\to 0} M_\delta(\widehat\varphi_\delta)
	\,\d x\,\d t
	&\leq \liminf_{\delta\to 0}\widehat\E\!\int_0^T\!\!\int_\OO
	M_\delta(\widehat\varphi_\delta)\,\d x\,\d t \leq C,
\end{align*}
where $C$ is the constant in \eqref{est1_del}, independent of
$\delta$. Setting
\[
S := \bigl\{(\omega,t,x)\in\widehat\Omega\times(0,T)\times\OO
\,:\,|\widehat\varphi(\omega,t,x)|=1\bigr\},
\]
and combining the bound above with \eqref{eq:liminf_M},
\[
+\infty
\leq\widehat\E\!\int_S \liminf_{\delta\to 0}
M_\delta(\widehat\varphi_\delta)\,\d x\,\d t
\leq C < +\infty,
\]
which forces $\bigl|S\bigr|_{\widehat\Omega\times(0,T)\times\OO} = 0$.
Therefore $\bigl|\widehat\varphi(\omega,t,x)\bigr|<1$ for almost every $(\omega,t,x)\in\widehat\Omega\times(0,T)\times\OO.$ The proof of Theorem~\ref{th:d} is complete.

\section*{Data availability statement}
No new data were created or analyzed in this study. 
Data sharing is not applicable.

\section*{Conflict of interest statement}
The authors have no conflicts of interest to declare.\vspace{\baselineskip} 

\noindent
\textbf{Acknowledgments.} 
A.D.P.~and L.S.~are members of Gruppo Nazionale per l'Analisi Matematica, la Probabilit\'a e le loro Applicazioni (GNAMPA), Istituto Nazionale di Alta Matematica (INdAM), and gratefully acknowledge the financial support of the project  ``Equazioni differenziali stocastiche: sviluppi teorici e applicazioni a modelli per fenomeni fisici'' financed by INdAM-GNAMPA, CUP: E53C25002010001.
The present research is part of the activities of ``Dipartimento di Eccellenza 2023-2027''. The research of A.P. is funded by the European Union (ERC, StochMan, 101088589). The research of A.D.P. is funded by the European Union (ERC, NoisyFluid, No. 101053472). Views and opinions expressed are however those of the author(s) only and do not necessarily reflect those of the European Union or the European Research Council Executive Agency. Neither the European Union nor the granting authority can be held responsible for them.

\printbibliography

\end{document}
%%%%%%%%%%%%%%%%%%%%%%%%%%%%%%